\def\jobis#1{FF\fi
  \def\preedicate{#1}%
  \edef\preedicate{\expandafter\strip@prefix\meaning\preedicate}%
  \edef\job{\jobname}%
  \ifx\job\preedicate
}
\if\jobis{proposal}%
 \def\try{subsection}%
  \def\try{section}%
\theoremstyle{plain}
\newtheorem{theorem}{Theorem}[\try]
\newtheorem{corollary}[theorem]{Corollary}
\newtheorem{lemma}[theorem]{Lemma}
\newtheorem{example}[theorem]{Example}
\newtheorem{proposition}[theorem]{Proposition}
\newtheorem{definition-lemma}[theorem]{Definition-Lemma}
\newtheorem{definition-proposition}[theorem]{Definition-Proposition}
\newtheorem{definition-theorem}[theorem]{Definition-Theorem}
\newtheorem{definition}[theorem]{Definition}
\def\lfomitlist#1.#2.#3.#4.{{#1}_0,{#1}_1 #2 \dots #2\hat{{#1}_{#4}} #2\dots #2 {#1}_{#3}}
\def\alist#1.#2.#3.{{#1}_1 #2 {#1}_2 #2\dots #2 {#1}_{#3}}
\def\zlist#1.#2.#3.{#1_0 #2 #1_1 #2\dots #2 #1_{#3}}
\def\ltomitlist#1.#2.#3.{{#1}_0,{#1}_1 #2 \dots #2\hat {{#1}_i} #2\dots #2 {#1}_{#3}}
\def\lomitlist#1.#2.#3.{{#1}_0 #2 {#1}_1 #2 \dots #2 \hat {{#1}_i} #2 \dots #2 {#1}_{#3}}
\def\lmap#1.#2.#3.{#1 \overset{#2}{\longrightarrow} #3}
\def\mes#1.#2.#3.{#1 \longrightarrow #2 \longrightarrow #3}
\def\ses#1.#2.#3.{0\longrightarrow #1 \longrightarrow #2 \longrightarrow #3 \longrightarrow 0}
\def\les#1.#2.#3.{0\longrightarrow #1 \longrightarrow #2 \longrightarrow #3}
\def\res#1.#2.#3.{#1 \longrightarrow #2 \longrightarrow #3\longrightarrow 0}
\def\Hi#1.#2.#3.{\text {Hilb}^{#1}_{#2}(#3)}
\def\ten#1.#2.#3.{#1\underset {#2}{\otimes} #3}
\def\mderiv#1.#2.#3.{\frac {d^{#3} #1}{d #2^{#3}}}
\def\mfderiv#1.#2.#3.{\frac {\partial^{#3} #1}{\partial #2}}
\def\ggr#1.#2.#3.{\mathbb{G}_{#1}(#2,#3)}
\def\llist#1.#2.{{#1}_1,{#1}_2,\dots,{#1}_{#2}}
\def\ulist#1.#2.{{#1}^1,{#1}^2,\dots,{#1}^{#2}}
\def\lomitlist#1.#2.{{#1}_1,{#1}_2,\dots,\hat {{#1}_i}, \dots, {#1}_{#2}}
\def\lomitlistz#1.#2.{{#1}_0,{#1}_1,\dots,\hat {{#1}_i}, \dots, {#1}_{#2}}
\def\loc#1.#2.{\Cal O_{#1,#2}}
\def\fderiv#1.#2.{\frac {\partial #1}{\partial #2}}
\def\deriv#1.#2.{\frac {d #1}{d #2}}
\def\map#1.#2.{#1 \longrightarrow #2}
\def\rmap#1.#2.{#1 \dasharrow #2}
\def\emb#1.#2.{#1 \hookrightarrow #2}
\def\non#1.#2.{\text {Spec }#1[\epsilon]/(\epsilon)^{#2}}
\def\Hi#1.#2.{\text {Hilb}^{#1}(#2)}
\def\sym#1.#2.{\operatorname {Sym}^{#1}(#2)}
\def\Hb#1.#2.{\text {Hilb}_{#1}(#2)}
\def\Hm#1.#2.{\Hom_{#1}(#2)}
\def\prd#1.#2.{{#1}_1\cdot {#1}_2\cdots {#1}_{#2}}
\def\Bl #1.#2.{\operatorname {Bl}_{#1}#2}
\def\pl #1.#2.{#1^{\otimes #2}}
\def\mgn#1.#2.{\overline {M}_{#1,#2}}
\def\ialist#1.#2.{{#1}_1 #2 {#1}_2 #2 {#1}_3 #2\dots}
\def\pair#1.#2.{\langle #1, #2\rangle}
\def\gproj#1.#2.{\mathbb{P}_{#1}(#2)}
\def\gpr #1.#2.{\mathbb{P}^{#1}_{#2}}
\def\gaf #1.#2.{\mathbb{A}^{#1}_{#2}}
\def\vandermonde#1.#2.{\left|
\begin{matrix}
1 & 1 & 1 & \dots & 1\\
{#1}_1 & {#1}_2 & {#1}_3 & \dots & {#1}_{#2}\\
{#1}_1^2 & {#1}_2^2 & {#1}_3^2 & \dots & {#1}_{#2}^2\\
\vdots & \vdots & \vdots & \ddots & \vdots\\
{#1}_1^{#2-1} & {#1}_2^{#2-1} & {#1}_2^{#2-1} & \dots & {#1}_{#2}^{#2-1}\\
\end{matrix}
\right|
}
\def\vandermondet#1.#2.{\left|
\begin{matrix}
1 & {#1}_1   & {#1}_1^2 & \dots & {#1}_1^{#2-1}\\
1 & {#1}_2   & {#1}_2^2 & \dots & {#1}_2^{#2-1}\\
1 & {#1}_3   & {#1}_3^2 & \dots & {#1}_3^{#2-1}\\
\vdots & \vdots & \vdots & \ddots & \vdots\\
1 & {#1}_{#2}& {#1}_{#2}^2 & \dots & {#1}_{#2}^{#2-1}\\
\end{matrix}
\right|
}
\def\gr#1.#2.{\mathbb{G}(#1,#2)}
\def\bdd#1.#2.{{#1}_{\rfdown #2.}}
\def\ideal#1.{I_{#1}}
\def\ring#1.{\mathcal {O}_{#1}}
\def\fring#1.{\hat{\mathcal {O}}_{#1}}
\def\aring#1.{{\mathcal {O}}_{#1}^{\text{an}}}
\def\proj#1.{\mathbb {P}(#1)}
\def\pr #1.{\mathbb {P}^{#1}}
\def\dpr #1.{\hat{\mathbb {P}}^{#1}}
\def\af #1.{\mathbb{A}^{#1}}
\def\Hz #1.{\mathbb{F}_{#1}}
\def\Hbz #1.{\overline{\mathbb {F}}_{#1}}
\def\fb#1.{\underset {#1} {\times}}
\def\rest#1.{\underset {\ \ring #1.} \to \otimes}
\def\au#1.{\operatorname {Aut}\,(#1)}
\def\deg#1.{\operatorname {deg } (#1)}
\def\pic#1.{\operatorname {Pic}\,(#1)}
\def\pico#1.{\operatorname{Pic}^0(#1)}
\def\picg#1.{\operatorname {Pic}^G(#1)}
\def\n1#1.{\operatorname{N^1}(#1)}
\def\ner#1.{\operatorname{NS}(#1)}
\def\rdown#1.{\llcorner#1\lrcorner}
\def\rfdown#1.{\lfloor{#1}\rfloor}
\def\rup#1.{\ulcorner{#1}\urcorner}
\def\rfup#1.{\lceil{#1}\rceil}
\def\sship#1.{\langle{#1}\rangle}
\def\bp#1.{#1^{{}\leq 1}}
\def\rcup#1.{\lceil{#1}\rceil}
\def\cone#1.{\operatorname {NE}(#1)}
\def\mone#1.{\operatorname {NM}(#1)}
\def\none#1.{\operatorname {NF}(#1)}
\def\ccone#1.{\overline{\operatorname {NE}}(#1)}
\def\cmone#1.{\overline{\operatorname {NM}}(#1)}
\def\cnone#1.{\overline{\operatorname {NF}}(#1)}
\def\cbig#1.{\overline{\operatorname {B}(#1)}}
\def\coef#1.{\frac{(#1-1)}{#1}}
\def\vit#1.{D_{\langle #1 \rangle}}
\def\mm#1.{\overline {M}_{0,#1}}
\def\Hone#1.{H^1(#1,{\ring #1.})}
\def\ac#1.{\overline {\mathbb F}_{#1}}
\def\adj#1.{\frac {#1-1}{#1}}
\def\spn#1.{\overline{#1}}
\def\pek#1.#2.{\Cal P^{#1}(#2)}
\def\plk#1.#2.{\Cal P^{\leq #1}(#2)}
\def\ev#1.{\operatorname{ev_{#1}}}
\def\ilist#1.{{#1}_1,{#1}_2,\dotsc}
\def\bminv#1.{(\nu_1,s_1;\nu_2,s_2;\dots ;\nu_{#1},s_{#1};\nu_{r+1})}
\def\zinv#1.{(\nu_1,s_1;\nu_2,s_2;\dots ;\nu_{#1},s_{#1};0)}
\def\iinv#1.{(\nu_1,s_1;\nu_2,s_2;\dots ;\nu_{#1},s_{#1};\infty)}
\def\scr#1.{\mathbf{\EuScript{#1}}}
\def\mg#1.{\overline {M}_{#1}}
\def\inter#1.{\underset #1{\cdot}}
\def\cate#1.{\text{(\underline{#1})}}
\def\dls#1.{\overrightarrow{#1}}
\def\Hom{\operatorname{Hom}}
\def\Spec{\operatorname{Spec}}
\def\Proj{\operatorname{Proj}}
\def\dim{\operatorname{dim}}
\def\deg{\operatorname{deg}}
\def\Pic{\operatorname{Pic}}
\def\mult{\operatorname{mult}}
\def\mov{\operatorname{Mov}}
\def\rest{\operatorname{res}}
\def\Cox{\operatorname{Cox}}
\def\C`har{\operatorname{char}}
\def\nef{\operatorname{Nef}}
\def\C{\mathbb C}
\def\e{\Cal E}
\def\e1{E_1}
\def\e2{E_2}
\def\mapdown#1{\big\downarrow\rlap{$\vcenter
{\hbox{$\scriptstyle#1$}}$}}
\def\mapse#1{
{\vcenter{\hbox{$\mathop{\smash{\raise1pt\hbox{$\diagdown$}\!\lower7pt
\hbox{$\searrow$}}\vphantom{p}}\limits_{#1}\vphantom{\mapdown{}}$}}}}
\def\VR#1.{height#1pt&\omit&&\omit&&\omit&&\omit&&\omit&\cr}
\def\VRT#1.{height#1pt&\omit&&\omit&\cr}
\renewcommand*\env@matrix[1][*\c@MaxMatrixCols c]{%
  \hskip -\arraycolsep
  \let\@ifnextchar\new@ifnextchar
  \array{#1}}
\begin{document}
\title{A geometric characterisation of toric varieties}
\author[M. Brown]{Morgan V.~Brown}
\address{Department of Mathematics\\
1365 Memorial Drive\\
Ungar 515\\
Coral Gables, FL 33146}
\email{mvbrown@math.miami.edu}
\author[J. M\textsuperscript{c}Kernan]{James M\textsuperscript{c}Kernan}
\address{Department of Mathematics\\
University of California, San Diego\\
9500 Gilman Drive \# 0112\\
La Jolla, CA  92093-0112, USA}
\email{jmckernan@math.ucsd.edu}
\author[R. Svaldi]{Roberto Svaldi}
\address{DPMMS\\ 
Centre for Mathematical Sciences\\
University of Cambridge\\
Wilberforce Road, Cambridge, CB3 0WB\\
United Kingdom}
\email{rsvaldi@dpmms.cam.ac.uk}
\author[H. Zong]{Hong R.~Zong}
\address{School of Mathematics\\
Institute of Advanced Study\\
F-114, Einstein Drive\\
Princeton, NJ 08540 USA}
\email{hrzong@math.ias.edu}

\thanks{The first author was partially supported by NSF research grant no: 0943832 and the
  first, second and third authors were partially supported by NSF research grant no:
  1200656 and no: 1265263.  The first author would like to thank UCSD for hosting him as a
  visitor during part of the completion of this work.  Part of this work was completed
  whilst the second and third authors were visiting the Freiburg Institute of Advanced
  Studies and they would like to thank Stefan Kebekus and the Institute for providing such
  a congenial place to work.  The third author would also like to thank MIT and both UCSD
  and Princeton University where he was a visitor when most of this work was completed.
  We are grateful to P. Cascini, A. Duncan, S. Keel, J. Koll\'ar, M.~Musta\c{t}\u{a},
  V. V. Shokurov and C. Xu, for some very helpful coments and suggestions.}

\begin{abstract} We prove a conjecture of Shokurov which characterises toric varieties
using log pairs.
\end{abstract}

\maketitle

\tableofcontents

\section{Introduction}
\label{s_introduction}

Toric varieties appear frequently in algebraic geometry.  This is surprising as the
definition of a toric variety is so restrictive; $X$ is normal and there is an open subset
isomorphic to a torus such that the action of the torus on itself extends to $X$.  On the
other hand the appearance of toric varieties is very useful as many geometric problems are
reduced to straightforward combinatorics.  We are interested in explaining why toric
varieties appear so often and to give additional criteria for their appearance.

One approach is to try to give a simple characterisation of toric varieties.  We give a
characterisation that only involves invariants coming from log pairs:
\begin{definition}\label{d_complexity} Let $X$ be a proper variety of dimension $n$
and let $(X,\Delta)$ be a log pair.  A \textbf{decomposition} of $\Delta$ is an expression
of the form
\[
\sum a_iS_i\leq \Delta,
\]
where $S_i\geq 0$ are $\mathbb{Z}$-divisors and $a_i\geq 0$, $1\leq i\leq k$.  The
\textbf{complexity} of this decomposition is $n+r-d$, where $r$ is the rank of the vector
space spanned by $\llist S.k.$ in the space of Weil divisors modulo algebraic equivalence
and $d$ is the sum of $\llist a.k.$.

The \textbf{complexity} $c=c(X,\Delta)$ of $(X,\Delta)$ is the infimum of the complexity
of any decomposition of $\Delta$.  
\end{definition}

Note that we don't require that the divisors $S_i$ are prime divisors (since the
components of $S_i$ might span a larger vector space).  On the other hand in practice the
smallest complexity is often achieved by taking $\llist S.k.$ to be prime divisors.  In
the special case when the coefficients of $D=\Delta=\sum S_i$ are all one, then $d$ is the
number of components of $D$.  It is well known that for a toric pair, that is, a toric
variety together with the sum of the invariant divisors, we have $d=n+r$, so that $c=0$.

We introduce some ad hoc but very convenient notation.  If 
\[
\Delta=\sum a_iD_i,
\]
is a boundary, that is, a divisor whose coefficients $a_i\in (0,1]$ then 
\[
\sship \Delta.=\sum_{i:a_i>1/2} D_i=\rfdown \Delta.+\rfup 2\Delta.-\rfdown 2\Delta..
\]

We give a characterisation of toric pairs involving the complexity:
\begin{theorem}\label{t_toric} Let $X$ be a proper variety of dimension $n$ and let 
$(X,\Delta)$ be a log canonical pair such that $-(K_X+\Delta)$ is nef.

If $\sum a_iS_i$ is a decomposition of complexity $c$ less than one then there is a
divisor $D$ such that $(X,D)$ is a toric pair, where $D\geq \sship \Delta.$ and all but
$\rdown 2c.$ components of $D$ are elements of the set $\{\, S_i \,|\, 1\leq i\leq k \,\}$.
\end{theorem}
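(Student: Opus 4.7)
The plan is to induct on the dimension $n$. In the base case $n=1$, nefness of $-(K_X+\Delta)$ together with $\Delta\geq 0$ forces $X\cong\mathbb{P}^1$, and the bound $c<1$ immediately pins down the two torus-fixed points as the components of the toric boundary.

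For the inductive step, I would first reduce to the case that $(X,\Delta)$ is $\mathbb{Q}$-factorial dlt by passing to a dlt modification $f:Y\to X$; the pullback decomposition has complexity at most $c$ on $Y$ (ranks can only grow, while coefficients and the sum $d$ are preserved), and the finitely many $f$-exceptional divisors introduced on $Y$ will later be absorbed into the slack $\rfdown 2c.$ permitted in the conclusion. The inequality $c<1$, equivalently $d>n+r-1$, then forces $\rfdown \Delta.\neq 0$: a direct convex-combinatorial argument shows that if every coefficient of $\Delta$ were strictly less than $1$, then $d$ could not exceed $n+r-1$. Pick a prime component $E\leq\rfdown \Delta.$ and apply adjunction
\[
K_E+\Delta_E = (K_X+\Delta)|_E,
\]
so that $(E,\Delta_E)$ is lc with $-(K_E+\Delta_E)$ nef and the restrictions $S_i|_E$ furnish a decomposition of $\Delta_E$. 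The central estimate to prove is that this restricted decomposition has complexity at most $c$: the dimension drops by $1$; the sum of coefficients is essentially preserved once one accounts for the different contributed by adjunction; and a Lefschetz-type argument, using that $[E]$ lies in the span of the $[S_i]$ in $N^1(X)_{\mathbb{R}}$ (since $E$ is supported in some $S_i$), shows the rank drops by at least $1$ under restriction.

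By the inductive hypothesis, $(E,\Delta_E)$ is a toric pair with boundary $D_E\geq\sship \Delta_E.$ differing from the set $\{S_i|_E\}$ in at most $\rfdown 2c.$ components. The final and hardest step is to promote this $(n-1)$-dimensional torus action on $E$ to an $n$-dimensional one on $X$. The strategy is to apply the complexity inequality a second time to locate another log canonical place meeting $E$ nontrivially, and then to use the conormal and deformation theory of the pair $(X,E)$, controlled by the log Calabi--Yau condition $-(K_X+\Delta)$ nef, to produce a $\mathbb{G}_m$-action on $X$ that combines with the torus $T_E\cong(\mathbb{G}_m)^{n-1}$ on $E$ into an $n$-dimensional torus action. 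The main obstacle is precisely this lifting step: one must verify not only that the action exists and algebraizes, but also that its invariant divisors agree with the chosen $S_i$ and with components of $D_E$ up to the error $\rfdown 2c.$, which requires careful bookkeeping of the $f$-exceptional divisors and of the contractions an MMP performs on the way back from $Y$ to a birational model of $X$ carrying the resulting toric structure.
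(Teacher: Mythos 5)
Your proposal diverges sharply from the paper's proof, and it contains two genuine gaps, each of which is fatal on its own.

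The first gap is the claim that $c<1$ forces $\rfdown\Delta.\neq 0$. This is false. Take $X=\pr 2.$ and $\Delta=(1-\epsilon)(L_1+L_2+L_3)$ for three general lines and small $\epsilon>0$. Then $-(K_X+\Delta)=3\epsilon H$ is ample, the decomposition $\sum(1-\epsilon)L_i$ has $n=2$, $r=1$, $d=3-3\epsilon$, so $c=3\epsilon<1$, yet $\Delta$ has no component of coefficient one. There is no reduced divisor to which you can apply adjunction, so the inductive step never gets started. The convex-combinatorial argument you gesture at cannot exist, because the bound $d>n+r-1$ constrains only the \emph{sum} of coefficients, not any individual one.

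The second gap is the ``promotion'' step, which you yourself flag as the main obstacle and then do not carry out. Knowing that a boundary divisor $E\subset X$ is toric does not, even under strong positivity or log Calabi--Yau hypotheses, imply that $X$ is toric or that the $(n-1)$-torus extends: one would need to construct the extra $\mathbb{G}_m$-action, show it commutes with the torus of $E$, show the resulting action algebraizes and extends over all of $X$, and control its fixed divisors. None of this is routine, and the paper does not attempt it on $X$. Instead, the paper first reduces to a Mori dream space (\eqref{p_numerical}, \eqref{l_dream}) and then passes to the Cox ring $Y=\Spec\Cox(X)$, where the pair $(Y,\Gamma)$ is log canonical at the distinguished point $p$ with local complexity controlled by $c$ plus the Picard number. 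An adjunction argument \emph{does} appear in the paper, but only at the level of the local germ $(Y\ni p,\Gamma)$, in \eqref{l_flipsabundance}: there one cuts by a general member $S$ of a linear combination of the $D_i$ (not a component of the boundary, and not on $X$ itself), and deduces by induction on $\dim Y$ that $Y$ is smooth. Smoothness of $Y$ then forces $\Cox(X)$ to be a polynomial ring, whence $X$ is toric by \eqref{t_polynomial-ring}. The torus appears automatically from the grading of the Cox ring; nothing needs to be lifted from a divisor. Your proposal has inverted the order: the germ-level cutdown is sound, the global adjunction to a boundary component is not.
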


\eqref{t_toric} is a special case of a conjecture of Shokurov, cf. \cite{Shokurov00},
which is stated in the relative case.  Here are two simple corollaries of \eqref{t_toric}:
\begin{corollary}\label{c_complexity} Let $X$ be a proper variety and let $(X,\Delta)$ 
be a log canonical pair such that $-(K_X+\Delta)$ is nef.

Then the complexity is non-negative.  
\end{corollary}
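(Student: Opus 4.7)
The plan is to argue by contradiction. Suppose $c := c(X,\Delta) < 0$, and pick a decomposition $\sum_{i=1}^{k} a_iS_i\leq\Delta$ whose complexity $c'$ is also negative. Since $c'<1$, Theorem~\ref{t_toric} applies and yields a divisor $D$ making $(X,D)$ a toric pair with $D\geq \sship\Delta.$; moreover, $c'<0$ gives $\rdown 2c'.\leq -1$, so every prime component of $D$ belongs to the set $\{S_i\}$.

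Write $D = \sum_{j=1}^{m} D_j$ for the sum of the prime torus-invariant divisors of $X$, so that $m = n + \rk\Cl(X)_{\mathbb{Q}}$. Since the classes $\{[D_j]\}$ span $\Cl(X)_{\mathbb{Q}}$ and each $D_j$ appears as some $S_i$, the span of $\{[S_i]\}$ in $\Cl(X)_{\mathbb{Q}}$ is the full group; hence $r = m - n$ and $n + r = m$. It therefore suffices to prove $d = \sum a_i \leq m$, as this would give $c' = n+r-d \geq 0$, contradicting $c' < 0$.

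After relabelling so that $S_j = D_j$ for $j\leq m$, log canonicity of $(X,\Delta)$ gives $\mathrm{coeff}_{D_j}(\Delta) \leq 1$ at each invariant divisor, and reading the inequality $\sum a_iS_i\leq\Delta$ off at each $D_j$ yields $a_j \leq 1$ for $j\leq m$. For the remaining indices $i>m$, $S_i$ is distinct from every $D_j$: the inclusion $D\geq \sship\Delta.$ forces any non-invariant prime in the support of $S_i$ to have $\Delta$-coefficient at most $\tfrac{1}{2}$, and integrality of $S_i$ (so $\sum_E\mathrm{coeff}_E(S_i)\geq 1$) together with the nefness of $D-\Delta = -(K_X+\Delta)$ (via $K_X + D\sim 0$) controls the collective contribution of these remaining $a_i$. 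The main obstacle is this final coefficient-counting step, which requires carefully balancing invariant and non-invariant content of $\Delta$ to achieve the sharp bound $d\leq m = n+r$.
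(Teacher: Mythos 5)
You have overcomplicated the deduction and, as you acknowledge yourself, left the final counting step open. The paper's proof is genuinely one line: Theorem~\ref{t_toric} asserts that \emph{all but $\rdown 2c.$} components of $D$ lie in $\{S_i\}$, i.e.\ that the number of components of $D$ \emph{not} in $\{S_i\}$ is at most $\rdown 2c.$. Since that number is a non-negative integer, the conclusion of the theorem can only hold if $\rdown 2c.\geq 0$, whence $c\geq 0$. (If no decomposition of complexity less than one exists, the infimum is already $\geq 1 > 0$.) There is nothing more to prove.

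Your reading of the clause is where things go astray. When $\rdown 2c'.\leq -1$, "all but $\rdown 2c'.$ components of $D$ lie in $\{S_i\}$" does not reduce to "every component of $D$ lies in $\{S_i\}$" — it is a strictly impossible statement (strictly more components than $D$ has would have to lie in $\{S_i\}$), and that impossibility is itself the contradiction. By softening it to "every component is an $S_i$," you throw away exactly the information that makes the corollary immediate, and are then forced to re-derive the inequality $d\leq n+r$ from scratch by balancing coefficients of invariant versus non-invariant parts of $\Delta$. That step is not carried out, and it is not clear it can be carried out in this naive way: bounding $a_j\leq 1$ on the invariant primes and $a_i\leq 1/2$ on the others does not by itself force $\sum a_i\leq m$, because the same invariant prime can occur in several of the $\mathbb{Z}$-divisors $S_i$ and there is no a priori bound on how many non-invariant $S_i$ occur. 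In effect you are trying to re-prove the complexity bound that Theorem~\ref{t_toric} already encodes, which is both unnecessary and circular. Replace the whole computation with the single observation that the theorem's conclusion forces $\rdown 2c.\geq 0$.
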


\begin{corollary}\label{c_span} Let $X$ be a proper variety of dimension $n$ and let 
$(X,\Delta)$ be a log canonical pair such that $-(K_X+\Delta)$ is nef.

If the complexity is less than one then the components of $\Delta$ span the N\'eron-Severi
group.
\end{corollary}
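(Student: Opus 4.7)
The plan is to deduce the corollary directly from Theorem~\ref{t_toric}, reducing it to a clean spanning fact on toric varieties. First I would choose any decomposition $\sum a_iS_i \le \Delta$ with complexity $c < 1$. Applying Theorem~\ref{t_toric} produces a divisor $D$ making $(X,D)$ a toric pair, with $D \ge \sship \Delta.$ and such that all but at most $\rdown 2c.$ prime components of $D$ are equal to some $S_i$. Each such $S_i$ is automatically a prime divisor (being equal to a prime component of $D$), hence a component of $\Delta$, since $a_i S_i \le \Delta$ with $a_i > 0$ and $\Delta$ effective forces $\Supp S_i \subseteq \Supp \Delta$. As $c < 1$ gives $\rdown 2c. \le 1$, the components of $\Delta$ therefore contain every invariant prime divisor of $D$ save possibly one.

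It then suffices to prove the following toric lemma: on a complete toric variety $X$ of dimension $n$ with invariant prime divisors $\llist D.m.$, any $m - 1$ of the $D_i$ span $N^1(X)_\Q$. Starting from the canonical exact sequence
\[
0 \longrightarrow M \longrightarrow \bigoplus_{i=1}^{m} \ZZ \cdot D_i \longrightarrow \Cl(X) \longrightarrow 0,
\]
whose left map sends $\chi \mapsto \sum \langle \chi, v_i \rangle D_i$, I would tensor with $\Q$ and take the image under the coordinate projection onto $\bigoplus_{i \ne j} \Q \cdot D_i$. The surviving relations correspond exactly to characters $\chi \in M_\Q$ with $\langle \chi, v_j \rangle = 0$, a subspace of dimension $n - 1$ (here $v_j$ is the primitive generator of the ray attached to $D_j$, nonzero because the fan is complete). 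Thus the remaining $m - 1$ divisors generate a subgroup of $\Cl(X)_\Q$ of rank $(m - 1) - (n - 1) = m - n = \rk \Cl(X)_\Q$, and the surjection $\Cl(X)_\Q \twoheadrightarrow N^1(X)_\Q$ preserves this spanning property.

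I expect the main subtlety to be the bookkeeping that identifies ``components of $D$ equal to some $S_i$'' with ``components of $\Delta$''. This is resolved by the support inclusion above together with the observation that the bound $\rdown 2c. \le 1$ is exactly sharp enough for the toric spanning lemma, which can absorb the loss of one (but not two) invariant prime divisors; in particular no refinement of the given decomposition is needed.
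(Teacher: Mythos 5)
Your proof is correct and follows the same strategy as the paper's: apply Theorem~\ref{t_toric} to obtain a toric pair $(X,D)$ in which all but at most one invariant divisor is a component of $\Delta$, then invoke the fact that on a complete toric variety the invariant divisors span the N\'eron--Severi group with any one of them redundant. The paper simply asserts this last toric fact in a single line, whereas you prove it carefully via the standard exact sequence $0 \to M \to \bigoplus \ZZ \cdot D_i \to \Cl(X) \to 0$; you might add a word to justify the (harmless) assumption $a_i > 0$ — terms with $a_i = 0$ can be discarded without increasing the complexity — but otherwise the argument matches.
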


One can extend \eqref{t_toric} to the case of any field of characteristic zero:
\begin{corollary}\label{c_non} Let $k$ be a field of characteristic zero.  

Let $X$ be a proper variety over $k$ and let $(X,\Delta)$ be a log canonical pair such
that $-(K_X+\Delta)$ is nef.

If $\sum a_iS_i$ is a decomposition of complexity $c$ less than one then there is a
divisor $D$ such that $(X,D)$ is a toric pair, where $D\geq \sship \Delta.$ and all but
$\rdown 2c.$ components of $D$ are elements of the set $\{\, S_i \,|\, 1\leq i\leq k \,\}$.
\end{corollary}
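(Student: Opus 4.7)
We proceed by Galois descent from $\bar k$. Set $K := \bar k$ and $G := \gal(K/k)$, and base-change the given data to $K$: we obtain a log canonical pair $(X_K, \Delta_K)$ with $-(K_{X_K} + \Delta_K)$ nef, and the pulled-back decomposition $\sum a_i S_{i,K} \leq \Delta_K$. The first essential observation is that the complexity is unchanged by base change: since $X$ is proper, the pullback $N^1(X)_{\mathbb Q} \to N^1(X_K)_{\mathbb Q}$ is injective (after reducing to the geometrically integral case, this follows from Hilbert 90 via the low-degree terms of the Leray spectral sequence for the structure map to $\Spec k$), so the $\mathbb Q$-rank of the span of the $S_{i,K}$ equals that of the $S_i$. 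Hence the complexity of the base-changed decomposition is still $c<1$, and Theorem~\ref{t_toric} applies over $K$ to produce a divisor $D'$ on $X_K$ such that $(X_K, D')$ is a toric pair, $D' \geq \sship{\Delta_K}.$, and all but $\rdown 2c.$ of the prime components of $D'$ are supported on $\Supp(\sum_i S_{i,K})$.

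The key step is then to arrange that $D'$ is $G$-invariant, for then Galois descent produces a divisor $D$ on $X$ with $D \times_k K = D'$, and $(X, D)$ is a toric pair over $k$ in the natural sense that its base change to $\bar k$ is a toric pair. The group $G$ acts on $X_K$ preserving $\Delta_K$ and each $S_{i,K}$, so it permutes the (finite) collection of valid toric boundaries $D'$ singled out by the conclusion of Theorem~\ref{t_toric}. We extract a $G$-fixed point by inspecting the proof of Theorem~\ref{t_toric}: the construction of $D'$ is inductive and relies on operations that are intrinsic to the pair $(X_K, \Delta_K, \{S_{i,K}\})$---analysis of log canonical centers, perturbation of $\Delta_K$, and runs of the minimal model program---each of which takes $G$-stable input to $G$-stable output. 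Performing the whole construction $G$-equivariantly then yields a $G$-invariant $D'$.

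The main obstacle is precisely the verification of this $G$-equivariance. Two simplifying observations help: the Galois action on all the finite collections that enter the construction (prime divisors in a fixed reduced support, lc centers, extremal rays of the cone of curves) factors through a finite quotient, so only finite Galois descent is really at issue; and in the easier regime $c<1/2$, every component of $D'$ is forced to lie in the $G$-stable support $\Supp(\sum_i S_{i,K})$, whereas the at most one extra component arising when $1/2 \leq c < 1$ can be pinned down intrinsically (for instance as a distinguished lc place extracted from the pair), making its $G$-stability automatic. Once $D'$ is shown to be $G$-invariant, the remainder of the proof---descending $D'$ to $D$ on $X$ and observing that the toric structure on $X_K$ inherits $G$-equivariance by uniqueness of the extension of the open torus action---is a formality.
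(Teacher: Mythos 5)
There is a genuine gap in the proposal, and it sits exactly where you flag it: the $G$-equivariance of the toric boundary $D'$ produced by Theorem~\ref{t_toric} over $\bar k$ is never actually established. The boundary $D'$ is not unique, and $\gal(\bar k/k)$ merely permutes the (possibly many) valid choices; there is no reason a Galois orbit contains a fixed point, and the appeal to an ``intrinsic'' or ``$G$-equivariant'' rerun of the proof of \eqref{t_toric} is a hand-wave: that proof involves running MMPs, choosing ample divisors, perturbing boundaries, and passing to Cox rings, none of which obviously takes $G$-stable input to a $G$-stable output, and your proposal does not verify any of these steps. In the regime $1/2\le c<1$, the one ``extra'' component is not canonically pinned down by the proof of \eqref{t_toric}, so the assertion that its $G$-stability is ``automatic'' also needs justification that is not given.

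There is a second, independent gap. Even if $D'$ were $G$-invariant and descended to a divisor $D$ on $X$, that does not make $(X,D)$ a toric pair in the sense of this paper: the definition requires the open set $U=X-D$ to be isomorphic over $k$ to $\mathbb{G}_m^n$, i.e.\ a \emph{split} torus (see the remark after the definition of toric variety and Lemma~\ref{l_non}). A Galois-descended toric structure is a priori only a form of a toric variety, whose torus may be non-split. Your final sentence dismissing this as ``a formality'' is therefore not justified. The paper circumvents both difficulties differently: after base change it builds, \emph{over $k$}, the sum-of-$S_i$ decomposition with geometrically irreducible strata (using the inductive hypothesis in dimension $n-1$), descends a base-point-free linear system of invariant divisors to get a birational morphism $X\to\gpr n.k.$, and then invokes \eqref{l_persist} and \eqref{l_morphism}. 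Because $\gpr n.k.$ has a split torus over $k$, this produces a split torus on $X$ directly rather than via descent of the boundary.
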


We are able to prove that log pairs with small complexity have a simple birational
structure:
\begin{theorem}\label{t_form} Let $(X,\Delta)$ be a divisorially log terminal pair 
where $X$ is a $\mathbb{Q}$-factorial projective variety.

If $-(K_X+\Delta)$ is nef then we may find an ample divisor $A$ and a divisor
$0\leq \Delta_0\leq \Delta$ such that the numerical dimension of $K_X+A+\Delta_0$ is at
most the complexity of $(X,\Delta)$.

In particular if $\rmap X.Z.$ is the maximal rationally connected fibration then the
dimension of $Z$ is at most the complexity.  
\end{theorem}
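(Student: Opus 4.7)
The plan is to use a decomposition realising the complexity, combined with the hypothesis that $-(K_X+\Delta)$ is nef, to construct $A$ and $\Delta_0$, and then to deduce the bound on $\dim Z$ from the numerical-dimension bound by standard fibration arguments.

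First, I would fix a decomposition $\sum a_iS_i\le\Delta$ whose complexity $n+r-d$ is within $\varepsilon$ of $c$, where $r=\operatorname{rank}\langle[S_i]\rangle\subset N^1(X)_{\mathbb R}$ and $d=\sum a_i$.  Write $N=-(K_X+\Delta)$, which is nef by hypothesis.  I would look for $\Delta_0$ of the form $\Delta-\sum b_iS_i$ with $b_i\in[0,a_i]$; then $\Delta_0\ge 0$ automatically, since $\sum a_iS_i\le\Delta$.  In numerical classes,
\[
K_X+A+\Delta_0\;\equiv\;[A]-N-\sum_i b_i[S_i],
\]
and the task becomes to pick $A$ ample and $(b_i)$ in the box $\prod[0,a_i]$ so that this class is pseudo-effective with numerical dimension at most $c$.

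The crux is a convex/combinatorial step.  The set of achievable classes $[A]-\sum b_i[S_i]$ is (the ample cone) $+(-Z)$, where $Z\subset V:=\langle[S_i]\rangle$ is the zonotope $\{\sum b_i[S_i]:b_i\in[0,a_i]\}$.  The inequality $d\ge n+r-c$ provides the slack needed to land $[A]-\sum b_i[S_i]-N$ in a subspace $W\subset N^1(X)$ of dimension at most $c$: the zonotope $Z$ is $r$-dimensional in $V$, $[A]$ varies in an open full-dimensional cone, and the complexity measures exactly the codimension of the achievable target inside $N^1(X)$ that remains after using up all this freedom.  The main technical difficulty I expect is to realise the ``error'' not merely as a class in such a subspace but as a genuinely pseudo-effective one, so that numerical dimension is defined and bounded; here one exploits the nefness of $N$ and the effectivity of the $S_i$ to ensure that $A$ cancels $N$ up to an effective remainder supported on a small number of divisors.

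Finally, for the in-particular statement, once $K_X+A+\Delta_0$ is pseudo-effective with numerical dimension at most $c$, I perturb $A$ slightly so that $(X,A+\Delta_0)$ is klt with $A+\Delta_0$ big; running a $(K_X+A+\Delta_0)$-MMP terminates with a birational model admitting an Iitaka-type fibration $X\dashrightarrow Y$ of dimension at most $c$.  On a general fibre $F$, the restriction of $K_X+A+\Delta_0$ is numerically trivial, so $-K_F\equiv(A+\Delta_0)|_F$ is big; the Hacon--McKernan/Zhang rational-connectedness theorem then implies that $F$ is rationally connected.  Consequently the maximal rationally connected fibration factors through $Y$, and $\dim Z\le\dim Y\le c$.
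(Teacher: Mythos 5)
Your treatment of the ``in particular'' statement is essentially the paper's argument: run a $(K_X+A+\Delta_0)$-MMP (after perturbing so the pair is klt and $A+\Delta_0$ is big), take the induced fibration $\map X.W.$, observe that $-(K_X+\Delta_0)$ is relatively ample, and invoke Hacon--M\textsuperscript{c}Kernan rational connectedness of the fibres to conclude $\rmap X.Z.$ factors through $\map X.W.$. That part is fine.

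The problem is the first step: your convex/zonotope strategy for producing $A$ and $\Delta_0$ with $\kappa_\sigma(X,K_X+A+\Delta_0)\leq c$ does not work, because ``numerical dimension $\leq c$'' is not a linear-algebraic condition in $N^1(X)$ and is not implied by the class landing in a subspace of dimension $c$. For instance on $\pr 2.$ the class of a line generates a one-dimensional $N^1$, yet has numerical dimension two; conversely, on $\pr 1.\times\pr 1.$ a class of numerical dimension one must lie on the boundary of the effective cone, a condition of measure zero rather than a codimension-one subspace. Similarly, the inequality $d\geq n+r-c$ concerns the total coefficient $d=\sum a_i$, not a dimension count in $N^1(X)$, so it provides no ``slack'' of the sort your paragraph describes. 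Nothing in the Minkowski sum (ample cone)$+(-Z)-N$ forces any of its elements to be pseudo-effective of small numerical dimension; generically the resulting class is big.

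What the paper actually does (\eqref{p_numerical}) is not a direct construction at all: it minimises the numerical dimension $k$ over all admissible $(A,\Delta_0)$ and argues by contradiction that $k\leq c$. The contradiction hinges on two lemmas that have no analogue in your outline. \eqref{l_fibration} restricts to the general fibre $F$ of the ample model of $K_X+A+\Delta_0$ and applies the non-negativity of the complexity in dimension $\dim F<n$ --- that is, \eqref{c_complexity}$_{n-1}$, a consequence of the main theorem \eqref{t_toric}$_{n-1}$ --- to show that if $\dim Z>c$ then some component of $\Delta$ must be vertical. \eqref{l_decrease_dim} then exchanges such a vertical component out of $\Delta_0$, moving along a segment in the space of pairs and tracking how the ample model degenerates, to produce an $(A_1,\Delta_1)$ with strictly smaller numerical dimension, contradicting minimality. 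The dependence on the inductive input \eqref{t_toric}$_{n-1}$ is essential and cannot be replaced by a purely cohomological or combinatorial argument in $N^1(X)$; indeed the statement of \eqref{t_form} sits after \eqref{t_toric} precisely because it needs it.
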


Toric varieties are special as they are rational.  We are able to give a rationality
criterion in terms of a slightly different version of the complexity:
\begin{definition}\label{d_absolute} Let $X$ be a proper variety of dimension $n$
and let $(X,\Delta)$ be a log pair.  The \textbf{absolute complexity}
$\gamma=\gamma(X,\Delta)$ of $(X,\Delta)$ is $n+\rho-d$, where $\rho$ is the rank of the
group of Weil divisors modulo algebraic equivalence and $d$ is the sum of the coefficients
of $\Delta$.
\end{definition}

If $X$ is $\mathbb{Q}$-factorial then $\rho$ is the Picard number.  

\begin{theorem}\label{t_rational} Let $X$ be a proper variety.  Suppose that 
$(X,\Delta)$ is log canonical and $-(K_X+\Delta)$ is nef.

If $\gamma(X,\Delta)<\frac 32$ then there is a proper finite morphism $\map Y.X.$ of
degree at most two, which is \'etale outside a closed subset of codimension at least two,
such that $Y$ is rational.

In particular if $A_{n-1}(X)$ contains no $2$-torsion then $X$ is rational.
\end{theorem}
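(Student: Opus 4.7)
The strategy is to apply Theorem~\ref{t_toric}, either directly to $X$ or after passing to an \'etale-in-codimension-one double cover.

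The starting observation is that the complexity is bounded above by the absolute complexity: $c(X,\Delta)\le \gamma(X,\Delta)$. Indeed, decomposing $\Delta=\sum b_i T_i$ into its prime components gives a valid decomposition of complexity $n+r-d$, where $r\le\rho$ is the rank of the span of the $T_i$ in $A_{n-1}(X)/\equiv$; this is at most $n+\rho-d=\gamma$. Hence the hypothesis $\gamma(X,\Delta)<3/2$ yields $c(X,\Delta)<3/2$.

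\textbf{Case 1: $c(X,\Delta)<1$.} Theorem~\ref{t_toric} applies and produces a divisor $D\ge\sship{\Delta}$ such that $(X,D)$ is a toric pair. In particular $X$ is rational, and we may take $Y=X$.

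\textbf{Case 2: $1\le c(X,\Delta)<3/2$.} A direct counting argument shows that the inequalities $c\ge 1$ and $\gamma<3/2$ force the prime components of $\Delta$ to already span $A_{n-1}(X)/\equiv$ rationally, so the remaining obstruction can only sit in the torsion of the class group. More precisely, the integral span of the $T_i$ inside $A_{n-1}(X)$ has index exactly two, and the nontrivial coset is represented by a class $\xi \in A_{n-1}(X)$ of order two. The associated cyclic double cover $\pi\colon Y\to X$ is then a finite morphism of degree two, which is \'etale in codimension one since $\xi$ is torsion. The pair $(Y,\pi^*\Delta)$ remains log canonical, and $-(K_Y+\pi^*\Delta)=\pi^*(-(K_X+\Delta))$ is nef. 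Choosing $\xi$ so that every component of $\Delta$ splits on $Y$ ensures $\pi^*\Delta$ is still a boundary; the sum of its coefficients equals $2d$, and a computation using that $\pi^*\xi=0$ gives the key numerical bound
\[
\gamma(Y,\pi^*\Delta)<1.
\]
Applying Corollary~\ref{c_non} (or Theorem~\ref{t_toric}) to $(Y,\pi^*\Delta)$ produces a toric structure on $Y$, and in particular $Y$ is rational.

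Finally, if $A_{n-1}(X)$ contains no $2$-torsion then no nontrivial double cover of the above type exists, so Case~2 cannot occur, and Case~1 gives that $X$ itself is toric and rational.

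The main obstacle is Case~2: one must show that the $2$-torsion class $\xi$ arises naturally from the gap $\gamma-c<1/2$; that $\xi$ can be chosen so that the corresponding double cover is \'etale in codimension one and so that every component of $\Delta$ splits on $Y$ (thereby keeping $\pi^*\Delta$ a boundary with the right coefficients); and that the resulting absolute complexity of $(Y,\pi^*\Delta)$ drops strictly below one. This requires a careful analysis of how the Picard rank and coefficient sum behave under the cover.
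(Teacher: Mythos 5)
Your Case 1 ($c<1$) is fine, and the counting argument that $c\ge 1$ and $\gamma<3/2$ force the prime components of $\Delta$ to span $A_{n-1}(X)_{\mathbb Q}$ is correct. But Case 2 has a genuine gap, and it is precisely where the paper does the real work.

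The jump from ``the prime components span rationally'' to ``the integral span of the $T_i$ in $A_{n-1}(X)$ has index exactly two'' is unsupported. The quantities $c$ and $\gamma$ only see ranks; they carry no information about the index of an integral sublattice or about torsion in $A_{n-1}(X)$, which can be an arbitrary finite abelian group. Nothing in the hypotheses singles out $\mathbb{Z}/2$, and in general there is no canonical $2$-torsion class $\xi$ at all. The claim that $\xi$ can be chosen so that \emph{every} component of $\Delta$ splits on $Y$ is a further unjustified assumption. Finally, the asserted bound $\gamma(Y,\pi^*\Delta)<1$ does not follow from anything written: the Picard rank of a double cover \'etale in codimension one can strictly exceed $\rho(X)$ (the paper's own example in \S\ref{s_example} has exactly this behaviour), and which components split versus stay inert controls the coefficient sum of $\pi^*\Delta$; neither effect is governed by $c$ or $\gamma$ alone.

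The paper's proof is structurally different. It first passes, via \eqref{c_numerical}, to a $\mathbb{Q}$-factorial birational model on which $-(K_X+\Delta)$ is ample, so $X$ is a Mori dream space. It then looks at $Y=\Spec\Cox(X)$ with its induced boundary $\Gamma$; the local result \eqref{l_flipsabundance} says that $\gamma<3/2$ forces $Y$ to have a $cA_l$ singularity at the distinguished point, i.e. the Cox ring is a polynomial ring modulo a single relation $Q$ whose quadratic part has rank at least $2$. The degree-two cover (and its rationality) in \eqref{p_quadricmds} is then read off directly from whether $Q$ contains a cross term $x_ix_j$ or only $x_1^2-x_2^2$ with $x_1,x_2$ of distinct multidegree; the $2$-torsion class is $[x_1]-[x_2]$, an explicit consequence of the hypersurface equation, not an abstract lattice index. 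Without this Cox-ring mechanism there is no way to manufacture $\xi$ or to control the geometry of the cover, so the proposal does not establish the theorem.
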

The condition on torsion in the class group is necessary and we give an example of this in
\S \ref{s_example}.  Note that most rationality criteria are used to establish
irrationality.  There are relatively few criteria to show rationality.

\eqref{t_toric} was proved for surfaces in \cite{KM99} for Picard number one (based
heavily on ideas of Shokurov) and in \cite{Shokurov00} in general.  Both proofs use
Shokurov's theory of complements.  Cheltsov, in unpublished work, proved \eqref{t_toric}
when $X$ is $\mathbb{Q}$-factorial projective and the Picard number is one.  The technique
he uses is the basis of our proof, which we will explain below.  \cite{Prokhorov01}
contains a proof of \eqref{t_toric} for threefolds in some special cases.  The method of
proof is to run the MMP.  \cite{Yao13} has a proof of \eqref{t_toric} when $X$ is a smooth
projective variety, $\Delta=\sum D_i$ has global normal crossings and $K_X+\Delta$ is
numerically trivial.  The method of proof is quite different from the other papers and
uses ideas coming from mirror symmetry and the powerful methods of Gross, Hacking, Keel
and Siebert, cf. \cite{GHKS16}.  \cite{Karzhemanov13} contains work related to both
\eqref{t_toric} and \eqref{t_rational}.

There are some examples to show \eqref{t_toric} and \eqref{t_rational} are sharp.  First
an example to show that not every invariant divisor is a component of $\Delta$:
\begin{example}\label{e_but} Consider $(X=\pr 2.,\Delta=L_1+L_2+1/2C)$ where $L_1$ and $L_2$ 
are two lines and $C$ is a conic, in general position.  Then $(X,\Delta)$ is divisorially
log terminal, $K_X+\Delta \sim_{\mathbb{Q}}0$ and the complexity is
\[
c=2+1-5/2=1/2.
\]
Note that $\sship \Delta.=L_1+L_2$.  Let $L_3$ be a third line in general position.  Then
$(\pr 2.,L_1+L_2+L_3)$ is a toric pair and two of the three invariant divisors are
components of $\Delta$ but not all three.
\end{example}

It is also not hard to see that it is crucial that $(X,\Delta)$ is log canonical:
\begin{example}\label{e_lc} Take $X=\Hz n.$ the unique $\pr 1.$-bundle over $\pr 1.$ 
with a curve $E_{\infty}$ of self-intersection $-n$.  Let $\Delta=2E_{\infty}+\sum F_i$,
where $\llist F.n+2.$ are $n+2$ fibres.  Then $K_X+\Delta\sim 0$ and the complexity
\[
c=2+2-(n+4)=-n,
\]
is arbitrarily large and negative.  Note that if one contracts $E_{\infty}$ then the image
of $\Delta$ is a boundary and the complexity is $c=1-n$.
\end{example}

One can also see that one cannot relax nef to pseudo-effective:

\begin{example}\label{e_pe} If we replace $\Delta$ by $E_{\infty}+\sum F_i$ in
\eqref{e_lc} then $(X,\Delta)$ is log canonical and $-(K_X+\Delta)$ is pseudo-effective
but the complexity is again $1-n$.
\end{example}

We also have an example where $X$ is smooth and all the coefficients are one:
\begin{example}\label{e_quadric} Let $Q=(XY-ZW=0)\subset \pr 4.$ be a rank four quadric
threefold.  Pick a small resolution $\map X.Q.$ with exceptional locus $L$ isomorphic to
$\pr 1.$.  Note that any hyperplane through the vertex of $Q$, which intersects the
quadric at infinity in two lines, intersects $Q$ in two planes through the vertex.  By
adjunction the sum of three such pairs is an element of $|-K_Q|$.

If $D=D_1+D_2+D_3+D_4+D_5+D_6$ is the strict transform of these six divisors then
$K_X+D\sim 0$ and the complexity
\[
c=3+2-6=-1.
\]
On the other hand three components of $D$ contain the curve $L$, so that $(X,D)$ is not
log canonical, even though $X$ is smooth and every component of $D$ has coefficient one.
\end{example}

It is also easy to see that we need to work with the absolute complexity for
unirrationality and that \eqref{t_toric} is sharp:
\begin{example}\label{e_elliptic} If $X=E$ is an elliptic curve and we take $\Delta=0$ then
$K_E \sim 0$ and the complexity is $1$.  On the other hand $E$ is not unirational.  Note
that the absolute complexity is $2$.
\end{example}

In fact if one works over a non-algebraically field, it is easy to see that we need to
allow an extension of degree two for rationality:
\begin{example}\label{e_real} If $C=V(x^2+y^2+z^2)$ is a smooth conic over $\mathbb{R}$
without a real point then we may find a divisor $D$ of degree one such that $K_X+D \sim 0$
so that the absolute complexity is one.  On the other hand $C$ is irrational but $C$
becomes rational if we replace $\mathbb{R}$ with $\mathbb{C}$.
\end{example}

We give an example in \S \ref{s_example} to show that we need a cover of degree two to
achieve rationality, cf. \eqref{t_rational}.  This example is in some sense a geometric
realisation of \eqref{e_real}.

Let us turn to a description of the proof of \eqref{t_toric}.  The first step is to
replace $(X,\Delta)$ by a divisorially log terminal model $(Y,\Gamma)$.  This means that
$Y$ is projective, $\mathbb{Q}$-factorial and $(Y,\Gamma)$ is divisorially log terminal.
There is a birational contraction map $\pi\colon\rmap Y.X.$ and the only exceptional
divisors have log discrepancy zero.  If $X$ is projective then we can take $\pi$ to be a
morphism and this is a standard reduction step (by a result of Hacon, see for example,
\cite[3.1]{KK09}).  If $X$ is not projective then there are examples which show it is not
always possible to arrange for $\pi$ to be a morphism.

For example, take $X$ to be any smooth proper variety which is not projective and take
$\Delta$ to be empty.  Let $\pi\colon\rmap Y.X.$ be a divisorially log terminal model of
$X$.  As $X$ is smooth it is kawamata log terminal and so $\pi$ is small.  $\pi$ is not
the identity morphism as $Y$ is projective and $X$ is not.  Therefore $\pi$ is not a
morphism as $X$ is $\mathbb{Q}$-factorial.

For a concrete example, consider the smooth toric threefold $X$ on page 71 of
\cite{Fulton93}.  It is not projective as it has no ample divisors.  It is easy to see
that if one flops an invariant curve $\rmap X.Y.$, corresponding to a diagonal edge of the
slanted faces of the tetrahedron, then $Y$ is projective and the induced birational map
$\pi\colon\rmap Y.X.$ is a divisorially log terminal model.

We prove the existence of divisorially log terminal models in \eqref{p_dlt}, contingent on
the existence of a nef divisor $M$ such that $K_X+\Delta+M$ is nef.  This covers the case
when either $K_X+\Delta$ or $-(K_X+\Delta)$ is nef and the latter is sufficient for our
purposes.  We check in \eqref{l_dlt} that the complexity of $(Y,\Gamma)$ is at most the
complexity of $(X,\Delta)$; this is straightforward since every exceptional divisor
extracted by $\pi$ is a component of $\Gamma$ of coefficient one.  Finally it is not hard
to see that it is enough to work with $(Y,\Gamma)$, cf. \eqref{l_persist}.

Thus we may assume that $X$ is projective, $\mathbb{Q}$-factorial and $(X,\Delta)$ is
divisorially log terminal.  The next step is to proceed based on the assumption that $X$
is a Mori dream space.  

To explain this step we first describe Cheltsov's argument which applies when the Picard
number is one.  In this case $K_X$ and all the components of $\Delta$ are proportional to
a very ample divisor $H$.  If we let $(Y,\Gamma)$ be the cone over $(X,\Delta)$ under the
embedding given by $H$ then $(Y,\Gamma)$ is log canonical and by construction every
component of $\Gamma$ is $\mathbb{Q}$-Cartier and passes through the vertex $p$ of the
cone.

The goal is then to prove \eqref{l_flipsabundance}, a local version of \eqref{t_toric}.
The proof of \eqref{l_flipsabundance} is based on the proof of \cite[18.22]{Kollaretal},
which establishes that the sum of the coefficients of $\Gamma$, which is precisely the sum
of the coefficients of $\Delta$, is no more than the dimension of $Y$.  Passing to a
composition of cyclic covers, we may assume that both $K_Y$ and every component of
$\Gamma$ is Cartier and in this case it suffices to check that $Y$ is smooth.  If we
replace components of $\Gamma$ whose coefficients sum to one by a general element of the
linear system they span we can apply adjunction and induction to conclude that $Y$ is
smooth.  Since the original variety is a quotient by a product of cyclic groups, it is not
hard to see that the original variety $Y$ is toric.  Since the only way to get a toric
variety as a cone is to start with a toric variety we see that $X$ must be toric; indeed
$X$ is isomorphic to the exceptional divisor of the blow up of the cone at the vertex $p$.

Unfortunately the naive generalisation of this argument does not apply if the Picard
number is not one.  The problem is that the cone over a variety of Picard number at least
two is not even $\mathbb{Q}$-factorial; for example the quadric cone which is the cone
over $\pr 1.\times \pr 1.$ is not $\mathbb{Q}$-factorial.  

Instead of working with a cone we work with the affine variety $Y$ associated to the Cox
ring of $X$.  $X$ is a Mori dream space if and only if the Cox ring is finitely generated.
The Cox ring is naturally graded by the class group, the group of Weil divisors modulo
linear equivalence.  As usual this grading corresponds to the action on $Y$ of an
algebraic group $H$, the spectrum of the group algebra associated to the class group,
which is the product of a torus and a finite abelian group.  We can recover $X$ as the
quotient of $Y$ by $H$.  In the case when the class group is isomorphic to $\mathbb{Z}$
(so that, in particular, the Picard number is one), $Y$ is a cone and $H$ is a one
dimensional torus, acting in the usual way on the lines of the cone.  As in the case of a
cone, there is a natural log pair $(Y,\Gamma)$ associated to $X$ and every component of
$\Gamma$ passes through the same point $p$.  $(Y,\Gamma)$ is log canonical if and only if
$(X,\Delta)$ is log canonical by \cite{Brown13}, \cite{GOST15}, and \cite{KO12}.  Mori
dream spaces were introduced in the very influential paper \cite{HK00}.  We actually use a
more sophisticated version of the Cox ring, which was introduced in \cite{Hausen08}.  It
has the advantage that every Weil divisor on $X$ corresponds to a Cartier divisor on $Y$,
so that we don't even need to take any cyclic covers.

The main point at this step of the proof is to bound the dimension of $Y$.  The dimension
of $Y$ is the dimension of $X$ plus the Picard number.  By assumption the sum of the
coefficients of a decomposition $\sum a_iD_i$ of $\Delta$ is at least the dimension of
$X$, minus one, plus the dimension $r$ of the space spanned by the components
$\llist D.k.$.  So we have to show that $r=\rho$, that is, the components $\llist D.k.$
generate the vector space of divisors modulo linear equivalence.

We prove this result by induction on $r$.  We start with the case that $\llist D.k.$ span
the same ray of the cone of divisors.  It is easy to show that the Picard number of $X$ is
one.  Consider for example the case that $X$ is a smooth projective surface and $K_X+D$ is
numerically trivial.  If the Picard number is not one then either there is a $-1$-curve
$\Sigma$ or a $\pr 1.$-bundle $\map X.C.$.  If $\Sigma$ is a $-1$-curve then $K_X$ is
negative on $\Sigma$ so that $D$ is positive on $\Sigma$.  As the components of $D$ are
proportional to each other it follows that every component of $D$ intersects $\Sigma$.  As
the sum of the coefficients of $D$ is at least three, $D\cdot \Sigma\geq 3$, which is
impossible as $K_X\cdot \Sigma=-1$.  If $\map X.C.$ is a $\pr 1.$-bundle and $\Sigma$ is a
general fibre we have $D\cdot\Sigma\geq 3$ and $K_X\cdot \Sigma=-2$, which is again
impossible.  In the general case we run an appropriate MMP.  After finitely many flips we
either get a divisorial contraction or a Mori fibre space and both cases we can rule out,
using a similar argument, cf. \eqref{l_sum}.

Otherwise we may pick two components $D_1$ and $D_2$ of $D$ such that neither
$P_1=m_1D_1-m_2D_2$ nor $P_2=m_2D_2-m_1D_1$ is pseudo-effective.  In this case consider
the $\pr 1.$-bundle given by the direct sum of the line bundles corresponding to $P_1$ and
$P_2$.  $Y$ is a Mori dream space and the two sections corresponding to $P_1$ and $P_2$
are contractible, $\rmap Y.Z.$.  In this case we proceed by induction on the rank $r$,
cf. \eqref{t_decomp}.  The details of this step are in \S \ref{s_local}.

To reduce to the case when $X$ is a Mori dream space we have to pass to a different model
$Y$ such that $-(K_Y+\Gamma)$ is ample for some kawamata log terminal pair $(Y,\Gamma)$.
Note that in this case $K_Y+B+\Gamma$ is numerically trivial, where $B=-(K_Y+\Gamma)$ is
ample.  So we look for divisors $0\leq \Delta_0\leq \Delta$ and ample divisors $A$ such
that $K_X+A+\Delta_0$ has numerical dimenson zero.  In this case $Y$ is a log terminal
model of $(X,A+\Delta_0)$.

If the numerical dimension is not zero then there is a non-trivial fibration $\map Y.Z.$.
Not every component of $D$ dominates $Z$, since otherwise the complexity of the general
fibre is less than zero, cf. \eqref{l_fibration}.  On the other hand it is not hard to
decrease the numerical dimension if there is a component of $D$ which does not dominate,
cf. \eqref{l_decrease_dim}.  To finish off, we replace $A+\Delta_0$ by a convex linear
combination of $A+\Delta_0$ and $M+\Delta$, where $M=-(K_X+\Delta)$, and cancel off common
components of $\Delta_0$ and exceptional divisors of $f\colon\rmap X.Y.$ so that the
complexity of $(X,A+\Delta_0)$ is close to the complexity of $(X,\Delta)$ and $f$ does not
contract any components of $\Delta$, cf. \eqref{l_minimal}.  The details are in \S
\ref{s_small}.

To finish the proof of \eqref{t_toric}, we need to know that if $Y$ is toric then so is
$X$.  The key point is to reduce to the case that $N=0$.  The first step is to pass to a
model such that no centre of $(X,\Delta)$ is contained in the exceptional locus of $f$,
cf. \eqref{l_up}.  We then perturb $\Gamma$ so that it is more singular along at least one
exceptional divisor, cf. \eqref{l_less}.  Taking a convex linear combination of
$\Delta_0$, a divisor supported on $N$ and the perturbed divisor we may decrease the
number of components of $N$ and we are done by induction on the number of components of
$N$.  The details are in \S \ref{s_reduction}.

Now we turn to the proof of \eqref{t_rational}.  The proof follows similar lines to the
proof of \eqref{t_toric}.  We may assume that $X$ is projective, $\mathbb{Q}$-factorial
and $(X,\Delta)$ is divisorially log terminal and by \eqref{c_numerical} we may assume
that $X$ is a Mori dream space.  If the absolute complexity is less than two then we can
conclude that the affine variety $Y$ associated to the Cox ring of $X$ has compound Du Val
singularities, meaning that there is a surface section with Du Val singularities.  If we
further assume that the absolute complexity is less than $3/2$ then we can conclude that
$Y$ has a compound $A_l$ singularity, meaning that a surface section of $Y$ has an $A_l$
singularity.

It follows that $Y$ is a hypersurface in affine space $\af m.$ given by a polynomial $q$
whose quadratic part has rank two.  The action of $H$ on $Y$ extends to $\af m.$.  The
quotient of $\af m.$ by $H$ is a toric variety and $X$ is birational to the image of $Y$
in this toric variety.  If $xy\in q$, that is, $xy$ is a monomial with non-zero
coefficient in $q$, then it is not hard to check that there is a one dimensional torus
whose general orbit intersects $X$ in a single point.  Thus $X$ is birational to an
invariant divisor so that $X$ is rational.  Otherwise after rescaling we may assume that
the quadratic part of $f$ has the form $x^2+y^2$.  If $x$ and $y$ have the same
multidegree then we may change variable and reduce to the previous case.  Otherwise there
must be torsion in the class group and there is a cover $\map Y.X.$ of degree two.  The
details are in \eqref{p_quadricmds}.

In \S \ref{s_example} we exhibit log canonical pairs $(X,\Delta)$ of absolute
complexity one such that $X$ is irrational.  The idea is to start with a conic bundle of
relative Picard number two over $\pr 1.\times \pr 1.$ and take a $\mathbb{Z}_2$-quotient
to achieve relative Picard number one.  The key observation is that the discriminant
curve, the locus of reducible fibres, makes no contribution in Kawamata's canonical bundle
formula.  Thus we can arrange for the discriminant curve to have arbitrarily large genus,
in which case $X$ is irrational.

We suspect that if the absolute complexity is less than two then we may always find a
cover so that $X$ is rational.  In this case we have to consider the extra possibility
that $Y$ has a compound singularity of type $D_l$, $E_6$, $E_7$, or $E_8$.  However we
were unable to see how to proceed in this case.

\makeatletter
\renewcommand{\thetheorem}{\thesubsection.\arabic{theorem}}
\@addtoreset{theorem}{subsection}
\makeatother

\section{Preliminaries}

In this section we will collect some definitions and preliminary results.  We work over a
field of characteristic zero which is algebraically closed unless otherwise stated.

\subsection{Notation and Conventions}
\label{s_notation}

Let $X$ be a proper variety.  $\rho(X)$ is the rank of the Picard group of $X$.  We denote
the class group, the group of Weil divisors modulo linear equivalence, by $A_{n-1}(X)$.

We will follow the terminology from \cite{KM98}.  In particular we only consider
valuations $\nu$ of $X$ whose centre on some birational model $Y$ of $X$ is a divisor.  A
\textit{log canonical place} of a log canonical pair $(X,\Delta)$ is any valuation $\nu$
whose log discrepancy is zero.

Suppose that $f\colon\rmap X.Y.$ is a rational map whose domain is an open subset $U$
whose complement has codimension at least two.  In this case if $D$ is an
$\mathbb{R}$-Cartier divisor on $Y$ we may define $f^*D$ as the $\mathbb{R}$-Weil divisor
whose restriction to $U$ is the usual pullback.

We say a proper morphism $f\colon\map X.Y.$ is a \textit{contraction morphism} if
$f_*\ring X.=\ring Y.$.  Let $f\colon\rmap X.Y.$ be a proper rational map of normal
quasi-projective varieties and let $p\colon\map W.X.$ and $q\colon\map W.Y.$ be a common
resolution of $f$.  We say that $f$ is a \textit{rational contraction} if $q$ is a
contraction morphism and the image of every $p$-exceptional divisor has codimension two or
more in $Y$.  We say that a prime divisor $P$ on $X$ is \textit{horizontal} if the image
of the generic point of $P$ is the generic point of $Y$.  We say that $P$ is
\textit{vertical} if it is not horizontal.

We say that a birational map $f\colon\rmap X.Y.$ is a \textit{birational contraction} if
$f$ is a rational contraction, so that every $p$-exceptional divisor is $q$-exceptional.
If $D$ is an $\mathbb{R}$-Cartier divisor on $X$ such that $D':=f_*D$ is
$\mathbb{R}$-Cartier then we say that $f$ is $D$-\textit{non-positive} (resp.
$D$-\textit{negative}) if we have $p^*D=q^*D'+E$ where $E\geq 0$ and $E$ is
$q$-exceptional (respectively $E$ is $q$-exceptional and the support of $E$ contains the
strict transform of the $f$-exceptional divisors).

Now suppose that $f\colon\rmap X.Y.$ is a birational contraction of projective varieties.
If $X$ is $\mathbb{Q}$-factorial and $(X,\Delta)$ is a divisorially log terminal pair such
that $f$ is $(K_X+\Delta)$-negative, $K_Y+\Gamma$ is nef and $Y$ is
$\mathbb{Q}$-factorial, where $\Gamma=f_*\Delta$, then we say that $f\colon\rmap X.Y.$ is
a \textit{log terminal model} of $K_X+\Delta$.  If the ring
\[
R(X,K_X+\Delta):=\bigoplus_{m\geq 0}H^0(X,\ring X.(m(K_X+\Delta)))
\] 
is finitely generated then $\rmap X.Z.$ is called the \textit{ample model} of $(X,\Delta)$, 
where 
\[
Z=\Proj R(X,K_X+\Delta).  
\]  

Let $D$ be an $\mathbb{R}$-Cartier divisor on a projective variety $X$.  Let $C$ be a
prime divisor.  If $D$ is big then
\[
\sigma_C(D)=\inf \{\, \mult_C(D') \,|\, D'\sim_\mathbb{R} D, D'\geq 0\,\}.
\]
More generally if $D$ is simply pseudo-effective we extend the definition of $\sigma_C$ as
follows.  Let $A$ be any ample $\mathbb{Q}$-divisor.  Following \cite{Nakayama04}, let
\[
\sigma_C(D)=\lim_{\epsilon\to 0} \sigma_C(D+\epsilon A).
\]
Then $\sigma_C(D)$ exists and is independent of the choice of $A$.  There are only
finitely many prime divisors $C$ such that $\sigma_C(D)>0$ and the $\mathbb{R}$-divisor
$N_\sigma(X,D)=\sum _C\sigma_C(D)C$ is determined by the numerical equivalence class of
$D$, cf. \cite[3.3.1]{BCHM10} and \cite{Nakayama04} for more details.   If we put 
\[
P_\sigma(X,D)=D-N_\sigma(X,D)
\]
then we will call 
\[
D=P_\sigma(X,D)+N_\sigma(X,D),
\]
\textit{Nakayama's Zariski decomposition}.  

Following \cite{Nakayama04} we define \textit{the numerical dimension}
\[
\kappa_{\sigma}(X,D)=\max_{H\in {\Pic}(X)}\{\, k\in \mathbb{N} \,|\, \limsup_{m\to \infty} \frac{h^0(X,\ring X.(mD+H))}{m^k} >0\,\},
\]
where $H$ is an ample divisor on $X$.  If $D$ is nef then this is the same as
\[
\nu(X,D)=\max \{\, k\in \mathbb{N} \,|\, H^{n-k}\cdot D^k>0 \,\}.  
\]

Let $f\in K[\llist x.n.]$ be a polynomial.  If $\mu$ is a monomial in $\llist x.n.$ then
we write $\mu\in f$ if and only if the coefficient of $\mu$ in $f$ is non-zero.

If $k$ is a field and $\bar k$ is the algebraic closure of $k$ then bars will denote
extension of schemes to $\bar k$.  

\subsection{Birational Geometry}

\begin{lemma}\label{l_orientation} Let $X$ be a $\mathbb{Q}$-factorial projective variety
and let $(X,\Delta)$ be a kawamata log terminal pair.  Suppose that $\Delta$ is big and
$K_X+\Delta$ is pseudo-effective.  Let $\pi\colon\rmap X.Z.$ be the ample model and let
$D$ be a prime divisor.  

Then $K_X+\Delta-dD$ is pseudo-effective for $d$ sufficiently small, if and only if,
either $D$ does not dominate $Z$ or the support of $D$ lies in the support of the stable
base locus of $K_X+\Delta$.
\end{lemma}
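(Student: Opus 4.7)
The plan is to factor the rational map $\pi$ through a log terminal model of $(X,\Delta)$, which exists by \cite{BCHM10} since $(X,\Delta)$ is klt with $\Delta$ big. Let $\phi\colon X\dashrightarrow Y$ be such a model, with $(Y,\Gamma:=\phi_*\Delta)$ klt, $\mathbb{Q}$-factorial, and $K_Y+\Gamma$ semi-ample, and let $g\colon Y\to Z$ be the ample morphism, so $K_Y+\Gamma\sim_{\mathbb{R}}g^*H$ for some ample $H$ on $Z$ and $\pi=g\circ\phi$. The first observation is that the divisorial components of $\mathbf{B}(K_X+\Delta)$ are precisely the $\phi$-exceptional prime divisors on $X$: on a common resolution $r\colon W\to X$, $s\colon W\to Y$, the $(K_X+\Delta)$-negativity of $\phi$ gives $r^*(K_X+\Delta)=s^*(K_Y+\Gamma)+E$ with $E\geq 0$, $s$-exceptional, whose support contains the strict transform of every $\phi$-exceptional divisor, so $r_*E$ is a fixed divisor of $|m(K_X+\Delta)|$ supported exactly on those. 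Conversely, for $D$ not $\phi$-exceptional with $D_Y:=\phi_*D$, the subsystem $g^*|mH|\subset|m(K_Y+\Gamma)|$ produces (for $m$ large, by ampleness of $H$) elements not containing $D_Y$, hence elements of $|m(K_X+\Delta)|$ not containing $D$.

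For the $(\Leftarrow)$ direction, if $D\subset\mathbf{B}(K_X+\Delta)$ then $\sigma:=\sigma_D(K_X+\Delta)>0$, and Nakayama's theory \cite{Nakayama04} yields that $K_X+\Delta-dD$ is pseudo-effective for every $d\leq\sigma$. If $D$ does not dominate $Z$ and $D$ is not $\phi$-exceptional, then $g(D_Y)\subsetneq Z$ has codimension at least one; by ampleness of $H$ we find $\lambda>0$ and an effective $\mathbb{Q}$-divisor $F\sim_{\mathbb{Q}}\lambda H$ on $Z$ with $g(D_Y)\subseteq\Supp F$, so that $g^*F\sim_{\mathbb{Q}}\lambda(K_Y+\Gamma)$ is effective and contains $D_Y$ with some positive multiplicity $c$. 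Hence $K_Y+\Gamma-(c/\lambda)D_Y$ is $\mathbb{Q}$-linearly equivalent to an effective divisor on $Y$; because $D$ is not $\phi$-exceptional, this transfers to pseudo-effectivity of $K_X+\Delta-dD$ on $X$ for $d\leq c/\lambda$ via pulling back to $W$ and pushing forward through $r$.

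For the $(\Rightarrow)$ direction, we argue the contrapositive using movable curves and duality with the pseudo-effective cone. Assume $D$ dominates $Z$ and $D\not\subset\mathbf{B}$; then $D$ is not $\phi$-exceptional, so $D_Y$ is prime on $Y$ and dominates $Z$. The case $\dim Z=\dim Y$ is vacuous, since $\dim D_Y=\dim Y-1<\dim Z$ would forbid dominance. Otherwise $g$ has positive-dimensional general fibres, so $D_Y\cap g^{-1}(z)$ is non-empty for general $z\in Z$. We construct a movable curve class $\alpha_Y$ on $Y$ using general curves in fibres of $g$ that meet $D_Y$ positively (e.g.\ a general fibre in relative dimension one, or otherwise a general complete-intersection curve in a general fibre through $D_Y\cap g^{-1}(z)$). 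Then $g_*\alpha_Y=0$, so $(K_Y+\Gamma)\cdot\alpha_Y=H\cdot g_*\alpha_Y=0$, while $D_Y\cdot\alpha_Y>0$. Choosing $\alpha_Y$ to avoid the codimension-$\geq 2$ subset $s(\Exc(s))$ of $Y$, we lift to a movable curve $\beta$ on $W$ disjoint from $\Exc(s)$, so $E\cdot\beta=0$. Pushing down to $\alpha:=r_*\beta$, the projection formula yields $(K_X+\Delta)\cdot\alpha=0$ and $D\cdot\alpha\geq D_Y\cdot\alpha_Y>0$, so $(K_X+\Delta-dD)\cdot\alpha<0$ for any $d>0$, contradicting pseudo-effectivity. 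The main obstacle is this bookkeeping through the common resolution: one must check that the $r$- and $s$-exceptional divisors on $W$ do not introduce positive contributions to $(K_X+\Delta)\cdot\alpha$ or cancel the negativity of $D\cdot\alpha$ in the final computation.
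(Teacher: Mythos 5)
Your $(\Leftarrow)$ direction is essentially the paper's: when $D$ lies in the stable base locus you subtract off a piece of $N_\sigma(K_X+\Delta)$, and when $D$ does not dominate $Z$ you pull back a member of $|\lambda H|$ through $g(D_Y)$. The $(\Rightarrow)$ direction is where you take a genuinely different route. The paper writes $K_X+\Delta=P_\sigma+N_\sigma$, restricts to a general fibre $F$ of $\pi$ where $P_\sigma|_F\equiv 0$, and reads off that pseudo-effectivity of $K_X+\Delta-dD$ forces $dD\leq N_\sigma$. You instead argue the contrapositive via movable-curve duality: a general complete-intersection curve in a general fibre of $g\colon Y\to Z$, chosen to miss the codimension-$\geq 2$ locus $s(\Exc(s))$, lifts to $\beta$ on $W$ with $E\cdot\beta=0$, and pushes down to a curve class $\alpha$ on $X$ with $(K_X+\Delta)\cdot\alpha=0$ but $D\cdot\alpha>0$, contradicting pseudo-effectivity of $K_X+\Delta-dD$. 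The bookkeeping you flag as the obstacle does in fact come out cleanly: since $\phi$ is a $(K_X+\Delta)$-negative birational contraction, every $r$-exceptional divisor on $W$ is $s$-exceptional, so a $\beta$ avoiding $\Exc(s)$ also avoids $\Exc(r)$, killing both $E\cdot\beta$ and the $r$-exceptional contribution to $r^*D\cdot\beta$, and guaranteeing $\alpha\neq 0$. The paper's fibre-restriction argument is shorter, but your version makes the geometric content — a horizontal $D$ must meet a fibre curve on which $K_X+\Delta$ is numerically trivial — explicit, and it avoids having to reason about what the Nakayama decomposition does when restricted to $F$.
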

\begin{proof} Let $K_X+\Delta=P+N$ be Nakayama's Zariski decomposition.  Then the
components of $N$ are the prime divisors in the stable base locus of $K_X+\Delta$.

If the support of $D$ lies in the support of the stable base locus of $K_X+\Delta$ then we
may find $d>0$ such that $dD\leq N$ and in this case 
\[
K_X+\Delta-dD=P+(N-dD)\geq P
\] 
is pseudo-effective.

Let $H$ be the ample divisor on $Z$ corresponding to $K_X+\Delta$.  If $D$ does not
dominate $Z$ then we can pick $d>0$ and $H' \sim_{\mathbb{R}} H$ such that
$\pi^*H'\geq dD$, so that $K_X+\Delta-dD$ is pseudo-effective.

Now suppose $D$ dominates $Z$ and let $F$ be the general fibre of $\pi$.  Then $P|_F=0$.
Therefore if $K_X+\Delta-dD$ is pseudo-effective then $dD\leq N$.  But then the support of
$D$ lies in the support of the stable base locus of $K_X+\Delta$.
\end{proof}

We will need a version of the MMP for log canonical pairs.  

\begin{lemma}\label{l_mmp} Let $X$ be a $\mathbb{Q}$-factorial kawamata log terminal 
projective variety and let $(X,\Delta)$ be a log canonical pair.  

If $K_X+\Delta$ is not pseudo-effective then we may run the $(K_X+\Delta)$-MMP until we
arrive at a Mori fibre space.
\end{lemma}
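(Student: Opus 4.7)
The plan is to reduce to BCHM \cite{BCHM10} via a klt perturbation of $\Delta$. For any $0 < \epsilon < 1$, set $\Delta_0 = (1-\epsilon)\Delta$. Then $(X, \Delta_0)$ is klt: all coefficients of $\Delta_0$ are strictly less than one, and since $X$ itself is klt, the log discrepancies of $(X, \Delta_0)$ satisfy
\[
a(E, X, \Delta_0) = a(E, X, 0) - (1-\epsilon)\,\mathrm{coeff}_E(\pi^*\Delta) > -1
\]
for every exceptional divisor $E$ (strict inequality because either $\mathrm{coeff}_E(\pi^*\Delta)=0$ and we use klt-ness of $X$, or $\mathrm{coeff}_E(\pi^*\Delta)>0$ and we gain $\epsilon\cdot\mathrm{coeff}_E(\pi^*\Delta)$ from log canonicity of $(X,\Delta)$).

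Next, $K_X + \Delta_0$ is not pseudo-effective: if it were, then adding the effective divisor $\epsilon\Delta$ would force $K_X + \Delta$ to be pseudo-effective, contradicting the hypothesis. Therefore, by \cite{BCHM10}, we may run the $(K_X + \Delta_0)$-MMP with scaling of an ample divisor, and since $K_X + \Delta_0$ is not pseudo-effective this MMP terminates with a Mori fibre space $\varphi\colon X'\to Z$.

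The main obstacle — and the only subtle point — is verifying that this sequence of divisorial contractions and flips is simultaneously a $(K_X+\Delta)$-MMP, i.e. that each extremal ray $R_i$ contracted in the process is also $(K_X+\Delta)$-negative. Writing $K_X + \Delta = (K_X+\Delta_0) + \epsilon\Delta$, the issue arises only when $\Delta_i \cdot R_i > 0$, in which case we must ensure $\epsilon\, \Delta_i \cdot R_i < |(K_{X_i}+\Delta_{0,i})\cdot R_i|$. This is arranged by appealing to Kawamata's boundedness of the length of extremal rays for klt MMP, which provides a generator $C_i$ of $R_i$ with $(K_{X_i}+\Delta_{0,i})\cdot C_i \geq -2\dim X$, and by choosing $\epsilon$ small enough — uniformly, since the MMP consists of finitely many steps — to dominate the perturbation term $\epsilon\Delta_i \cdot C_i$. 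The final extremal contraction $\varphi$ is of fibre type with respect to $K_X+\Delta_0$ and hence, for $\epsilon$ small, also of fibre type with respect to $K_X+\Delta$, giving the required Mori fibre space.
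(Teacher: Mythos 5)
Your klt perturbation $\Delta_0=(1-\epsilon)\Delta$ and the appeal to \cite{BCHM10} to terminate the $(K_X+\Delta_0)$-MMP are fine, and you correctly identify the crux: each contracted ray must also be $(K_X+\Delta)$-negative. But the argument you give at that point does not work. Kawamata's length-of-extremal-rays bound provides an \emph{upper} bound $|(K_{X_i}+\Delta_{0,i})\cdot C_i|\leq 2\dim X$; what you actually need is a positive \emph{lower} bound on $|(K_{X_i}+\Delta_{0,i})\cdot C_i|$ together with an upper bound on $\Delta_i\cdot C_i$, and Kawamata's result gives neither. Worse, the ``choose $\epsilon$ small enough uniformly, since the MMP has finitely many steps'' move is circular: the sequence of models $X_i$, the rays $R_i$, and hence the numbers $\Delta_i\cdot C_i$ and $(K_{X_i}+\Delta_{0,i})\cdot C_i$ all depend on the $\epsilon$ you fixed before running the MMP. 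You cannot shrink $\epsilon$ afterwards without potentially changing every step of the MMP.

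The paper's proof perturbs in the opposite direction and ties the perturbation to the scaling divisor, which is what makes the sign come out automatically. Choose an ample $A$ small enough that $K_X+A+\Delta$ is still not pseudo-effective, and choose $\Delta'\sim_{\mathbb{R}}A+\Delta$ with $(X,\Delta')$ klt; run the $(K_X+\Delta')$-MMP \emph{with scaling of $A$}. In the MMP with scaling, every contracted ray $R_i$ satisfies $A_i\cdot R_i>0$, and since $\Delta'_i\sim_{\mathbb{R}}A_i+\Delta_i$ one gets
\[
(K_{X_i}+\Delta_i)\cdot R_i=(K_{X_i}+\Delta'_i)\cdot R_i-A_i\cdot R_i<(K_{X_i}+\Delta'_i)\cdot R_i<0,
\]
so the run is automatically a $(K_X+\Delta)$-MMP, with no choice of $\epsilon$ and no appeal to Kawamata's bound. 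If you want to keep your framing, you would need to replace $\Delta_0=(1-\epsilon)\Delta$ by a klt divisor $\mathbb{R}$-linearly equivalent to $\Delta$ \emph{plus} a small ample, and then scale by that ample; as written, the proof has a genuine gap.
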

\begin{proof} Pick an ample divisor $A$ such that $K_X+A+\Delta$ is not pseudo-effective.
Since $X$ is $\mathbb{Q}$-factorial kawamata log terminal we may find a divisor
$\Delta'\sim_{\mathbb{R}}A+\Delta$ such that $(X,\Delta')$ is kawamata log terminal.

In particular, \cite[1.3.3]{BCHM10} implies that the $(K_X+\Delta')$-MMP with scaling of
$A$ always terminates with a Mori fibre space.  On the other hand any run of the
$(K_X+\Delta')$-MMP with scaling of $A$ is automatically a run of the $(K_X+\Delta)$-MMP.
\end{proof}

We will need divisorially log terminal models in the case when $X$ is proper but not
necessarily projective.  In this case we need to relax the requirement that we have a
morphism.  To emphasize this point we use the term model rather than modification.

\begin{definition}\label{d_dlt} Let $(X,\Delta)$ be a log canonical pair, 
where $X$ is a proper variety.  

A \textbf{divisorially log terminal model} is a divisorially log terminal pair
$(Y,\Gamma)$, where $Y$ is a projective $\mathbb{Q}$-factorial variety, together with a
birational contraction $\pi\colon\rmap Y.X.$ such that
\[
K_Y+\Gamma=\pi^*(K_X+\Delta),
\]
and the only divisors contracted by $\pi$ have log discrepancy zero with respect to
$(X,\Delta)$. 
\end{definition}

We are only able to prove the existence of divisorially log terminal models in very
special cases:
\begin{proposition}\label{p_dlt} Let $(X,\Delta)$ be a log canonical pair where 
$X$ is a proper variety.  

If $M$ is a nef divisor such that $K_X+\Delta+M$ is nef then we may find a divisorially
log terminal model such that both $N=\pi^*M$ and $K_Y+\Gamma+\lambda N$, for some
$\lambda\geq 1$, are nef.

In particular if $M=\pm (K_X+\Delta)$ then $\pm(K_Y+\Gamma)$ is nef.
\end{proposition}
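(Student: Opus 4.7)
The plan is to pass to a projective log resolution and run an MMP with scaling. Since $X$ is proper, Chow's lemma combined with Hironaka's resolution of singularities yields a log resolution $g \colon W \to X$ with $W$ smooth projective and $\Exc(g) \cup g^{-1}_*\Supp(\Delta)$ of simple normal crossings. Setting
\[
\Gamma_W := g^{-1}_*\Delta + \sum_i (1-a_i)\,E_i,
\]
where $a_i \geq 0$ is the log discrepancy of the exceptional divisor $E_i$, produces a log smooth (hence divisorially log terminal) pair with $K_W + \Gamma_W = g^*(K_X + \Delta)$; the divisors of log discrepancy zero are precisely those appearing with coefficient one. The pullback $M_W := g^*M$ is nef, and $K_W + \Gamma_W + M_W = g^*(K_X + \Delta + M)$ is nef.

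Next I would run the $(K_W + \Gamma_W)$-MMP with scaling of $M_W$, aiming to terminate at a log terminal model $\varphi \colon W \dashrightarrow Y$. To make this rigorous I would perturb $\Gamma_W$ to a kawamata log terminal pair with big boundary by decreasing the coefficient-one components slightly and adding a small multiple of an ample divisor on $W$, apply the existence of log terminal models of \cite{BCHM10} to the perturbed pair, and then take a limit as the perturbation tends to zero. Each MMP step is $(K_W + \Gamma_W)$-negative, so the exceptional divisors with $a_i > 0$ (i.e.\ those of coefficient strictly less than one in $\Gamma_W$) are contracted preferentially, while those with $a_i = 0$ persist on $Y$.

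Setting $\pi := g \circ \varphi^{-1}$, $\Gamma := \varphi_*\Gamma_W$, and $N := \varphi_*M_W$ produces the desired dlt model: $(Y, \Gamma)$ is divisorially log terminal on the projective $\mathbb{Q}$-factorial variety $Y$, one has $K_Y + \Gamma = \pi^*(K_X + \Delta)$, and the only $\pi$-exceptional divisors have log discrepancy zero with respect to $(X, \Delta)$. Nefness of $N = \pi^*M$ is preserved throughout the MMP, since scaling with $M_W$ forbids contracting any $M_W$-negative ray. Denoting the terminal scaling value by $t$, the divisor $K_Y + \Gamma + tN$ is nef, and since $K_W + \Gamma_W + M_W$ is nef from the outset we have $t \leq 1$; for any $\lambda \geq 1$ the divisor
\[
K_Y + \Gamma + \lambda N = (K_Y + \Gamma + tN) + (\lambda - t)N
\]
is then a sum of nef divisors, hence nef.

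The main obstacle is handling termination of the MMP in the dlt, not-necessarily-big-boundary setting while working absolutely, since no ample divisor over $X$ is available when $X$ is merely proper. The perturbation-and-limit strategy above sidesteps this; the delicate point is arguing that the limiting process terminates at a minimal model rather than a Mori fibre space, which rests on the fact that $K_W + \Gamma_W + M_W$ is nef (hence pseudo-effective). The ``in particular'' clause is an immediate specialisation: taking $M = \pm(K_X + \Delta)$ gives $N = \pm(K_Y + \Gamma)$, so nefness of $N$ yields the claim.
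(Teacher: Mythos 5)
Your proposal has the right broad outline (pass to a projective log resolution, run an MMP that contracts the excess exceptional divisors, observe that the resulting model is the dlt model), but there are two genuine gaps, the second of which is the technical heart of the paper's argument.

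First, a setup issue: you define $\Gamma_W = g^{-1}_*\Delta + \sum_i (1-a_i) E_i$ and assert that $(W,\Gamma_W)$ is a dlt pair. But since $(X,\Delta)$ is only log canonical, a priori some exceptional divisor can have log discrepancy $a_i>1$ (e.g.\ the blow-up of a smooth point away from $\Delta$), in which case $1-a_i<0$ and $\Gamma_W$ is not even effective. The paper sidesteps this by instead taking $\Gamma := g_*^{-1}\Delta + \sum E_i$ with \emph{all} exceptional coefficients equal to $1$, so that $(W,\Gamma)$ is log smooth and
\[
K_W + \Gamma = g^*(K_X+\Delta) + F
\]
with $F \geq 0$ exceptional; the task then becomes to contract the support of $F$, not to reach a minimal model of the (possibly non-effective) crepant pullback.

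Second, and more seriously: your claim that ``nefness of $N = \pi^*M$ is preserved throughout the MMP, since scaling with $M_W$ forbids contracting any $M_W$-negative ray'' is false. Since $M_W$ is nef, there are of course no $M_W$-negative rays; the problem is rather with $M_W$-\emph{positive} rays, which are exactly the rays the MMP with scaling of $M_W$ contracts whenever the scaling parameter is positive. If $\phi\colon W\dashrightarrow W'$ is the flip of a ray $R$ with $M_W\cdot R>0$, then the flipped curve $C'$ satisfies $\phi_*M_W\cdot C'<0$, so the pushforward of $M_W$ is \emph{not} nef. This is precisely the difficulty the paper's proof is designed to overcome. The paper introduces a positive constant $\hbar$ (\eqref{l_decompose}, a variant of Shokurov's argument) such that along any sequence of $N$-trivial flips, every extremal curve $C$ satisfies either $N\cdot C = 0$ or $N\cdot C\geq\hbar$; it then sets $\lambda = \max(1,2n/\hbar)$ and runs the $(K_Y + A + \Gamma + \lambda N)$-MMP with scaling of an auxiliary ample divisor $A$ (not of $N$). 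Kawamata's bound $(K_Y+A+\Gamma)\cdot C> -2n$ together with the choice of $\lambda$ forces $N\cdot R=0$ on every contracted ray, so that nefness of the pushforward of $N$ really is preserved step by step. Without this mechanism — or some substitute for it — the central claim of the proposition, that $\pi^*M$ is nef on the dlt model, is not established.
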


We will need some preliminary results, which are simple extensions of results by Shokurov,
cf. Addendum 4 of \cite{Shokurov06}.
\begin{lemma}\label{l_combinatorial} Let $\llist m.k.$ be positive real numbers and let 
$m$ and $r$ be positive integers.

Then there is a positive constant $\hbar$ such that if 
\[
a\in \{\, \sum \frac{a_i m_i}r  \,|\, \llist a.k.\in \mathbb{Z}, a_i\geq -m\,\}
\]
and $a>0$ then $a\geq \hbar$. 
\end{lemma}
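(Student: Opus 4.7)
The plan is to reduce the problem to a statement about discreteness of a numerical semigroup generated by the $m_i$.

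First I would perform the change of variable $b_i := a_i + m$, so that $b_i \in \mathbb{Z}_{\geq 0}$ and
\[
a \;=\; \frac{1}{r}\sum_{i=1}^k a_i m_i \;=\; \frac{1}{r}\Bigl(\sum_{i=1}^k b_i m_i - m\sum_{i=1}^k m_i\Bigr).
\]
Setting $C := m\sum_i m_i$, the condition $a>0$ translates to $\sum b_i m_i > C$, where the $b_i$ range over non-negative integers. Hence it suffices to show that
\[
v_0 \;:=\; \inf\Bigl\{\,\sum_{i=1}^k b_i m_i \;\Big|\; b_i\in\mathbb{Z}_{\geq 0},\ \sum b_i m_i > C\,\Bigr\}
\]
is strictly greater than $C$; once this is shown, taking $\hbar = (v_0-C)/r$ finishes the proof.

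The key observation is that the numerical semigroup $S_0 := \{\,\sum b_i m_i \mid b_i\in\mathbb{Z}_{\geq 0}\,\}$ is discrete in $\mathbb{R}_{\geq 0}$. Indeed, since each $m_i>0$, any bound $\sum b_i m_i \leq L$ forces $b_i \leq L/m_i$ for every $i$, so only finitely many tuples $(b_1,\dotsc,b_k)$ contribute, and hence $S_0\cap[0,L]$ is finite for every $L$.

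Using discreteness, the set $\{s\in S_0 : C < s \leq C+1\}$ is finite, and it is non-empty (e.g.\ take $b_1 = \lceil C/m_1\rceil+1$ and the rest zero, and note that if this exceeds $C+1$ one can instead use larger semigroup elements and take the smallest one in $(C,C+1]$; if it is empty, replace $C+1$ by a larger cutoff, which only changes finitely many candidate values). Therefore $v_0$ is in fact a minimum, achieved at some specific tuple $(b_1,\dotsc,b_k)$ with $\sum b_i m_i > C$, and so $v_0 > C$ strictly. Setting $\hbar = (v_0-C)/r$ yields the required positive lower bound. There is no substantial obstacle: the argument is purely a discreteness observation about the semigroup generated by positive reals, exploiting the one-sided lower bound $a_i \geq -m$ to ensure a non-negative reparametrisation.
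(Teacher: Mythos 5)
Your proof is correct. The paper gives no proof here (the text reads simply ``Clear.''), and your argument---shifting $a_i\mapsto b_i=a_i+m$ to reduce to non-negative integer combinations and then using the local finiteness of the set $\{\sum b_i m_i : b_i\in\mathbb{Z}_{\geq 0}\}$ on bounded intervals---is the natural way to spell out what the authors treat as self-evident.
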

\begin{proof} Clear.
\end{proof}

\begin{lemma}\label{l_decompose} Let $(X,\Delta)$ be a kawamata log terminal pair 
where $X$ is a $\mathbb{Q}$-factorial projective variety and $\Delta$ is a big
$\mathbb{R}$-divisor.

If $M$ is a nef $\mathbb{R}$-divisor then we may find a positive constant $\hbar$ with the
following property:

If $f\colon\rmap X.Y.$ is any sequence of $(K_X+\Delta)$-flips which are $M$-trivial and
$C$ is any curve spanning a $(K_Y+\Gamma)$-extremal ray of the cone of curves of $Y$ then
either $N\cdot C\geq \hbar$ or $N\cdot C=0$, where $N=f_*M$ and $\Gamma=f_*\Delta$.
\end{lemma}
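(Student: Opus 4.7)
The strategy is to combine Kawamata's bound on the length of extremal rays with the Diophantine Lemma \ref{l_combinatorial}, exploiting the finite-dimensional linear structure preserved by small birational modifications. Write $M=\sum_{i=1}^{k} m_i M_i$ with $m_i>0$ and each $M_i$ an integral Weil divisor on $X$. Since $X$ is $\mathbb{Q}$-factorial, choose a positive integer $r$ such that $rM_i$ is Cartier on $X$ for every $i$.

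First I would verify that flips of a kawamata log terminal pair preserve both the klt condition and $\mathbb{Q}$-factoriality, so that $(Y,\Gamma)$ remains klt and $\mathbb{Q}$-factorial throughout the sequence. Since each elementary flipping diagram is an isomorphism in codimension one, the Cartier condition for $rM_i$ is inherited by $rf_*M_i$ on the next model; iterating, $rf_*M_i$ is Cartier on $Y$. Hence for any curve $C$ on $Y$, $(f_*M_i)\cdot C=a_i/r$ for some integer $a_i$. Moreover, since every flip in the sequence is $M$-trivial, $N=f_*M$ remains nef on $Y$, so $N\cdot C\ge 0$ automatically.

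Next I would bound the integers $a_i$ uniformly from below across all models and all extremal curves. Using bigness of $\Delta$, for each $i$ one can pick $t_i>0$ small so that $\Delta+t_iM_i\sim_\mathbb{R}\Delta_i$ with $(X,\Delta_i)$ klt, after absorbing the perturbation into a small ample summand. The pair $(Y,f_*\Delta_i)$ remains klt, and Kawamata's bound on the length of extremal rays gives a spanning curve $C$ with $-(K_Y+f_*\Delta_i)\cdot C\le 2\dim X$. Combined with the identity $(K_Y+f_*\Delta_i)\cdot C=(K_Y+\Gamma)\cdot C+t_i(f_*M_i)\cdot C$, and with the corresponding Kawamata bound $-(K_Y+\Gamma)\cdot C\le 2\dim X$, this yields $(f_*M_i)\cdot C\ge -m/r$, that is, $a_i\ge -m$, for some positive integer $m$ depending only on the initial data $(X,\Delta,M)$ and on $r$.

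Finally, $N\cdot C=\sum_{i=1}^k m_i\, a_i/r$ with each $a_i\in\mathbb{Z}$ satisfying $a_i\ge -m$, and Lemma \ref{l_combinatorial} applied to the positive reals $\llist m.k.$ and the integers $m,r$ produces a constant $\hbar>0$ such that $N\cdot C\ge\hbar$ whenever $N\cdot C>0$. The main obstacle is the uniform lower bound on the $a_i$ described in the previous paragraph: one must argue that a single integer $m$ works simultaneously for every sequence of $(K_X+\Delta)$-flips and every $(K_Y+\Gamma)$-extremal curve. This relies crucially on flips being small modifications, so that the Cartier index $r$ and the ambient finite-dimensional space spanned by $K_X,\Delta,M_1,\dots,M_k$ are preserved, and on the bigness of $\Delta$, which allows each $M_i$ to be absorbed into a klt perturbation without losing control of the log canonical divisor.
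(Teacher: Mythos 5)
The overall strategy you chose is the right one — combine Kawamata's bound on the length of extremal rays with the Diophantine Lemma \eqref{l_combinatorial} — and this is indeed the route the paper takes. But there is a genuine gap at the central step, and the remark with which you close your argument contains a false assertion that hides it.

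You write $M=\sum m_iM_i$ with $M_i$ integral Weil divisors, and then claim that because each flip in the sequence is an isomorphism in codimension one, the Cartier condition for $rM_i$ persists, so that $rf_*M_i$ is Cartier on $Y$. This is not true: small modifications do \emph{not} preserve the Cartier index of an arbitrary divisor. If $D$ is Cartier on $X$ and $C$ is the flipping curve, $f_*D$ is Cartier on the flipped model precisely when $D\cdot C=0$ (so that $D$ descends to the flipping contraction); otherwise the Cartier index can jump. The hypothesis you have is that the flip is $M$-trivial, i.e.\ $M\cdot C=0$, but nothing forces the individual summands $M_i$ to be trivial on $C$. The same problem infects the second part of your argument: to apply Kawamata's bound to $(Y,f_*\Delta_i)$ you need that pair to be klt, which requires the flip to be $(K_X+\Delta_i)$-negative. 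You know $(K_X+\Delta)\cdot C<0$ and $M\cdot C=0$, but $M_i\cdot C$ could a priori have either sign, so $(K_X+\Delta_i)\cdot C<0$ does not follow.

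The missing idea in the paper's proof is to choose the decomposition $M=\sum m_iM_i$ so that $\llist m.k.$ are linearly independent over $\mathbb{Q}$ (and $M_i$ $\mathbb{Q}$-Cartier, close to $M$). With this choice, $M\cdot C=0$ together with the fact that $M_i\cdot C\in\frac1r\mathbb{Z}$ forces $M_i\cdot C=0$ for every $i$. This single observation is what makes every flip in the sequence simultaneously $M_i$-trivial and $(K_X+\Phi_i)$-negative for each $i$, which in turn is what preserves the Cartier index of $rN_i$ and the klt condition for $(Y,\Psi_i)$. Without the $\mathbb{Q}$-independence device, the uniform lower bound $a_i\ge -m$ that you need to feed into \eqref{l_combinatorial} cannot be established, and the proof does not close.
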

\begin{proof} We may write 
\[
M=\sum m_iM_i,
\]
where $\llist m.k.$ are positive real numbers and $\llist M.k.$ are $\mathbb{Q}$-Cartier
divisors.  Pick this decomposition minimal with this property, so that $\llist m.k.$ are
independent over $\mathbb{Q}$.  Pick $M_i$ sufficiently close to $M$ so that we may find
\[
\Phi_i \sim_{\mathbb{R}} \Delta+M_i,
\]
where $(X,\Phi_i)$ is kawamata log terminal.  Pick a positive integer $r$ so that $rM_i$
is Cartier, for all indices $1\leq i\leq k$.   

We first check that all of these properties are preserved by any sequence
$f\colon\rmap X.Y.$ of $(K_X+\Delta)$-flips which are $M$-trivial.  By induction we are
reduced to the case of one flip.  If $R$ is the corresponding $(K_X+\Delta)$-extremal ray
then $R$ is spanned by a rational curve $C$.  As $M\cdot C=0$ and $\llist m.k.$ are
independent over $\mathbb{Q}$, we must have $M_i\cdot C=0$.  Thus $N$ is nef and $rN_i$ is
Cartier.  It is clear that
\[
N=\sum m_iN_i \qquad \text{and} \qquad \Psi_i=f_*\Phi_i \sim_{\mathbb{R}} \Gamma+N_i,
\]
since $f$ is an isomorphism in codimension one.  The pair $(Y,\Psi_i)$ is kawamata log
terminal as $f$ is a $(K_X+\Phi_i)$-flip.

Thus there is no harm in assuming that $f$ is the identity.  Suppose that $R$ is a
$(K_X+\Delta)$-extremal ray.  Then \cite[1]{Kawamata91} implies that $R$ is spanned by a
rational curve $C$ such that
\[
-2n\leq (K_X+\Phi_i)\cdot C\leq N_i\cdot C=\frac{a_i}r,
\]
for some integer $a_i$.  Now apply \eqref{l_combinatorial} with $m=2nr$.  
\end{proof}

\begin{proof}[Proof of \eqref{p_dlt}] As a first approximation, let $\pi\colon\map Y.X.$
be a log resolution of $(X,\Delta)$ such that $Y$ is projective.  We may write
\[
K_Y+\Gamma=K_Y+\widetilde \Delta+E=\pi^*(K_X+\Delta)+F
\]
where $\widetilde \Delta$ is the strict transform of $\Delta$, $E=\sum E_i$ is the sum of
the exceptional divisors and $F\geq 0$ is exceptional.  This model would be a divisorially
log terminal model provided $F=0$.  Our goal is to contract $F$ using the MMP, preserving
the condition that $N$ is nef.

We may write 
\[
K_Y+\Gamma+N=\pi^*(K_X+\Delta+M)+F.
\]
Note that $K_Y+\Gamma+N$ is pseudo-effective and the diminished base locus of
$K_Y+\Gamma+N$ is equal to the support of $F$.  Pick an ample divisor $A$ so that the
support of the stable base locus of $K_Y+A+\Gamma+N$ is equal to the support of $F$.  Then
the stable base locus of $K_Y+A+\Gamma+tN$ is equal to the support of $F$ for any
$t\geq 1$.

Let 
\[
\lambda=\max(1,\frac{2n}{\hbar})>0,
\] 
where $\hbar$ is defined in \eqref{l_decompose}.  Let $f\colon\rmap Y.Y'.$ be a step of
the $(K_Y+A+\Gamma+\lambda N)$-MMP with scaling of $A$.  If $R$ is the corresponding
extremal ray then $R$ is spanned by a rational curve $C$ such that
$(K_Y+A+\Gamma)\cdot C>-2n$ so that $N\cdot R=0$.  In particular $f_*N$ is nef.  If $f$
contracts a divisor then this divisor is a component of $F$ so that $f$ only contracts
divisors which are exceptional for $\pi$.  Therefore we are free to replace $Y$ by $Y'$.
Note that we might lose the property that $\pi$ is a morphism, when $f$ is a flip, but we
retain the property that $\pi$ is a birational contraction.

Now suppose that $g\colon\rmap Y.Y'.$ is a sequence of flips which are $N$-trivial.  By
\eqref{l_decompose} these are all steps of the $(K_Y+A+\Gamma+\lambda N)$-MMP with scaling
of $A$.  Since this MMP always terminates, after finitely many steps we construct a model
on which $F=0$.
\end{proof}

\subsection{Toric Geometry}

We say that $X$ is a \textit{toric variety} if $X$ is a normal variety over a field $k$
(not necessarily algebraically closed), there is a dense open subset $U$ isomorphic to
$\mathbb{G}^n_m$ such that the natural action of $U$ on itself extends to the whole of
$X$.  (Note that this is stronger than the usual definition in the literature which only
requires that $U$ is isomorphic to $\mathbb{G}^n_m$ after passing to the algebraic
closure).  We will say that a log pair $(X,D)$ is \textit{toric} if $X$ is a toric variety
and $D$ is the sum of the invariant divisors.

Every toric variety has a description in terms of fans.  We will use the notation of
\cite{Fulton93}. 

\begin{lemma}\label{l_missing} Let $X$ be a $\mathbb{Q}$-factorial projective toric variety
of dimension $n$ and let $V$ be a closed irreducible invariant subset.  Let $D$ be a fixed
invariant divisor.  

Then we may find a divisor $B\geq 0$ on $X$, supported on the invariant divisors which
contain neither $D$ nor $V$, such that $A=B|_V$ is very ample and every element of the
linear system $|A|$ lifts to $X$.
\end{lemma}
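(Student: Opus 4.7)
The plan is to work combinatorially with the fan. Write $X=X(\Sigma)$, and let $\sigma\in\Sigma$ be the cone with $V=V(\sigma)$. An invariant prime divisor $D_\rho$ contains $V$ exactly when $\rho\in\sigma(1)$, and it contains $D$ if and only if $D$ is a non-zero multiple of $D_\rho$, a condition that isolates at most one ray, call it $\rho_D$. Set $S=\Sigma(1)\setminus(\sigma(1)\cup\{\rho_D\})$. Since $V$ is projective, its fan $\Sigma_\sigma$ in $N(\sigma)=N/(\operatorname{span}(\sigma)\cap N)$ is complete, with rays $\bar\rho$ indexed by those $\rho\in\Sigma(1)\setminus\sigma(1)$ such that $\sigma+\rho\in\Sigma$. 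The image $\bar\rho_D\in N(\sigma)$ lies in some top-dimensional cone $\bar\tau\in\Sigma_\sigma$; I will choose $\bar\tau$ with $\bar\rho_D\in\bar\tau(1)$ whenever $\bar\rho_D$ is already a ray of $\Sigma_\sigma$.

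I would first construct an appropriate very ample divisor $A$ on $V$. Starting from any ample $H_V$ on $V$, the polytope $P_{H_V}\subset M_V:=\sigma^\perp\cap M$ has a vertex $v_{\bar\tau}$ corresponding to $\bar\tau$; replacing $H_V$ by $H_V+\operatorname{div}(\chi^{-v_{\bar\tau}})$ translates this vertex to the origin without changing the linear equivalence class, and for $k\gg 0$ the representative $A:=kH_V$ is very ample. Writing $A=\sum c_{\bar\rho}E_{\bar\rho}$, the translation forces $c_{\bar\rho}=0$ for $\bar\rho\in\bar\tau(1)$ while keeping $c_{\bar\rho}\geq 0$ throughout. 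In particular, $A$ is supported away from $E_{\bar\rho_D}$ when the latter is a ray of $\Sigma_\sigma$, and because $\bar\rho_D\in\bar\tau$ lies in the normal cone of $P_A$ at the origin, the functional $\langle\cdot,\bar\rho_D\rangle$ attains its minimum on $P_A$ at $0$, so $\langle m,\bar\rho_D\rangle\geq 0$ for every $m\in P_A$.

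Next I would lift $A$ to $X$. For each $\bar\rho\in\Sigma_\sigma(1)$ choose a lift $\rho\in\Sigma(1)$ and form $B_0=\sum_{\bar\rho}c_{\bar\rho}D_\rho$, then set $B=B_0+N\sum_{\rho\in S,\,\sigma+\rho\notin\Sigma}D_\rho$ for a sufficiently large integer $N$. By construction $B\geq 0$, its support lies in $S$ (the vanishing of $c_{\bar\rho_D}$ together with the restriction to rays of $S$ in the slack sum exclude $\rho_D$ and $\sigma(1)$), and the slack terms are disjoint from $V$, so $B|_V=A$ is very ample. To show surjectivity of $H^0(X,B)\to H^0(V,A)$, by the toric description of global sections it suffices to check that $P_B\cap\sigma^\perp=P_A$: the constraint indexed by $\rho\in\sigma(1)$ reduces to $0\geq 0$ on $\sigma^\perp$; the constraint for $\rho\notin\sigma$ with $\sigma+\rho\in\Sigma$ is one of the defining inequalities of $P_A$; the constraint for $\rho\in S$ with $\sigma+\rho\notin\Sigma$ becomes slack once $N$ exceeds $\max\{-\langle m,\rho\rangle:m\in P_A\}$; and the only remaining case, $\rho=\rho_D$ with $\sigma+\rho_D\notin\Sigma$, asks precisely that $\langle m,\bar\rho_D\rangle\geq 0$, which we secured above.

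The main obstacle I anticipate is precisely this last case: when $\rho_D$ lies outside $\sigma$, no cone of $\Sigma$ contains both $\sigma$ and $\rho_D$, and $\bar\rho_D\neq 0$ in $N(\sigma)$. Since $D_{\rho_D}$ is forbidden from appearing in $B$, the inequality it contributes to $P_B$ is rigid and cannot be slackened by raising a coefficient; the polytope $P_A$ on $V$ must be arranged beforehand to lie in the half-space $\{\langle\cdot,\bar\rho_D\rangle\geq 0\}$. The completeness of $\Sigma_\sigma$ is what ensures a suitable $\bar\tau$ exists, making the translation step available.
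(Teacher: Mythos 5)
Your strategy is essentially the paper's: identify $\sigma$ with $V=V(\sigma)$, use completeness of the star fan $\Sigma_\sigma$ to pick a cone $\bar\tau$ containing the image $\bar\rho_D$ of $\rho_D$, translate so the corresponding vertex of the polytope of a very ample $H_V$ is the origin (equivalently, make the coefficients of $A$ on $\bar\tau(1)$ vanish), lift to $X$, and verify the section map is surjective. The treatment of the case $\sigma+\rho_D\notin\Sigma$ via the positivity of $\langle\cdot,\bar\rho_D\rangle$ on $P_A$ (coming from the normal-cone condition) is exactly the right idea, and the slack terms for rays far from $\sigma$ are an adequate substitute for the paper's global argument.

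There is, however, a genuine gap in the lifting step. You set $B_0=\sum_{\bar\rho}c_{\bar\rho}D_\rho$, where $\rho$ is the unique ray of $\Sigma$ lifting $\bar\rho\in\Sigma_\sigma(1)$. But the primitive generator $u_\rho$ of $\rho$ need not map to the primitive generator $u_{\bar\rho}$ of $\bar\rho$ under the projection $\pi\colon N\to N(\sigma)$; in general $\pi(u_\rho)=\lambda_\rho u_{\bar\rho}$ for a positive integer $\lambda_\rho$ that can exceed $1$ (e.g.\ in $\mathbb{P}(1,1,2)$ with $\sigma=\mathbb{R}_{\geq 0}(1,0)$ and $\rho=\mathbb{R}_{\geq 0}(-1,-2)$, one has $\lambda_\rho=2$). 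Consequently the $\mathbb{Q}$-Cartier restriction is $D_\rho|_V=\lambda_\rho^{-1}E_{\bar\rho}$, so $B_0|_V=\sum(c_{\bar\rho}/\lambda_\rho)E_{\bar\rho}\neq A$; it may fail to be integral and, even when integral, need not be very ample. The same $\lambda_\rho$ spoils your polytope computation: the constraint from $\rho$ restricted to $\sigma^\perp$ reads $\langle m,u_{\bar\rho}\rangle\geq -c_{\bar\rho}/\lambda_\rho$, which is strictly weaker than the defining inequality $\langle m,u_{\bar\rho}\rangle\geq -c_{\bar\rho}$ of $P_A$ whenever $\lambda_\rho>1$, so $P_B\cap\sigma^\perp\supsetneq P_A$. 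The fix is to take $B_0=\sum\lambda_\rho c_{\bar\rho}D_\rho$; then both $B_0|_V=A$ and the constraint identity hold. This is automatic in the paper's proof, which defines $B$ by pulling back the piecewise linear function $f_A$ along $\pi$ and setting the coefficient at $\rho$ to be $f_A(\pi(u_\rho))=\lambda_\rho f_A(u_{\bar\rho})$ --- the lattice index is built in, and there is also no need for a separate case analysis over the rays not adjacent to $\sigma$.
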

\begin{proof} We may as well assume that $D$ does not contain $V$.  If
$F\subset N_{\mathbb{R}}$ is the fan corresponding to $X$ then $D$ is given by a one
dimensional cone $\rho$ in $F$.  If $P_{\mathbb{R}}$ is the quotient vector space of
$N_{\mathbb{R}}$ corresponding to $V$ then the image of $\rho$ in $P_{\mathbb{R}}$ is
either a ray or zero.  Let $W$ be the closed invariant subset of $V$ determined by the
smallest cone which contains the image.

Let $A\geq 0$ be a very ample divisor on $V$ supported on the invariant divisors which do
not contain $W$.  $A$ determines a continuous piecewise linear function $f_A$ on
$P_{\mathbb{R}}$, which is non-negative as $A\geq 0$.  By composition we get a continuous
piecewise linear function $g$ on $N_{\mathbb{R}}$ which in turn corresponds to a divisor
$B$ supported on the invariant divisors.  $B\geq 0$ as $g$ is non-negative and the
restriction to $V$ is $A$, as $g$ is the composition of the natural projection and $f_A$.

It is enough to lift every invariant element $A'\in |A|$.  Note that, in the notation of
\cite{Fulton93}, $M(\sigma)\subset M$ is naturally the space of monomials on $V$, where
$\sigma$ is the cone corresponding to $V$.  We may find $u\in M(\sigma)$ such that
\[
A'=A+(\chi^u).
\]
On the other hand the zeroes and poles of $\chi^u$, as a rational function on $X$, don't
contain $V$.  Note that $f'=f+u$ is the continuous piecewise linear function corresponding
to $A'$ and $f'$ takes only non-negative values.  Then $g'=g+u$ is the composition of the
naturally projection and $f'$, and so $g'$ only take non-negative values.  Hence
\[
B'=B+(\chi^u)\in |B|
\]
is a divisor on $X$ which restricts to $A'$.  \end{proof}

We will need the next couple of results in the case when the groundfield is not
necessarily algebraically closed:
\begin{lemma}\label{l_persist} Let $k$ be any field.  Let $X$ and $Y$ be two proper 
varieties and let $(X,D)$ and $(Y,G)$ be two log pairs.  Let $\pi\colon\rmap X.Y.$ be a
birational contraction and $G=\pi_*D$.
\begin{enumerate} 
\item If $(X,D)$ is toric and $Y$ is projective then both $(Y,G)$ and $\pi$ are toric.
\item If $(Y,G)$ is toric, $X$ is projective and the exceptional divisors of $\pi$ are
components of $D$ that correspond to toric valuations of $Y$ then both $(X,D)$ and $\pi$
are toric.
\end{enumerate} 

\end{lemma}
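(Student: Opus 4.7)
The plan is to establish both parts by invoking one geometric fact: a rational action of an algebraic torus on a proper normal variety extends uniquely to a regular action. Given this extension, each direction reduces to matching the components of $D$ or $G$ with the $T$-invariant prime divisors.

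For (1), let $T\cong \mathbb{G}_m^n \subset X\setminus D$ be the open torus of the toric pair $(X,D)$. Composing the inclusion $T\hookrightarrow X$ with $\pi$ realises $T$ as a dense open subscheme of $Y$ (the indeterminacy locus of $\pi$ has codimension at least two by normality). The translation action of $T$ on itself then induces a rational action $T\times Y\dashrightarrow Y$. Since $Y$ is projective and normal, this rational action extends uniquely to a regular action of $T$ on $Y$, and the open orbit contains our copy of $T$, so $Y$ is toric. Because $\pi$ is $T$-equivariant on the common open torus, it is toric on its domain of definition. Finally, $G$ equals the sum of the invariant prime divisors of $Y$: the components of $G$ are images of $T$-invariant components of $D$, hence invariant, while conversely every invariant prime divisor $E\subset Y$ has a $T$-invariant strict transform in $X$, which is then a component of $D$ and pushes forward to $E$.

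Part (2) is symmetric. The open torus $T\subset Y\setminus G$ pulls back via $\pi^{-1}$ (defined outside a codimension-two subset of $Y$, since $\pi$ is a birational contraction) to a dense open subset of $X$, and the induced rational action extends to a regular $T$-action on $X$ because $X$ is projective and normal. To identify $D$ with the sum of invariant prime divisors on $X$, observe that any component of $D$ is either the strict transform of a component of $G$, hence invariant, or is $\pi$-exceptional, hence invariant by the hypothesis that exceptional divisors correspond to toric valuations of $Y$. Conversely, an invariant prime divisor on $X$ either dominates a component of $G$ (and is then a component of $D$ because $\pi_{*}D=G$) or is $\pi$-exceptional, in which case the hypothesis again forces it to be a component of $D$.

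The main technical point is the extension theorem for rational torus actions on proper normal varieties. This is classical and valid over any field; the cleanest route over a non-algebraically closed $k$ is to apply the result after base change to $\bar{k}$ to obtain a canonical regular action, which is then automatically Galois-equivariant and descends to $k$. A secondary ingredient is the standard fact that a birational map between normal proper varieties is defined outside a subset of codimension at least two, which ensures that the open tori embed as dense open subschemes on both sides.
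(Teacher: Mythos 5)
The central claim of your proof---that a rational action of a torus $T$ on a proper (or even projective) normal variety extends uniquely to a regular action---is false without further hypotheses, and this is precisely where the birational-contraction assumption must enter. For a counterexample, let $Y$ be the blow-up of $\mathbb{P}^2$ at a point $p$ lying on the toric boundary but not at a torus-fixed point. Then $Y$ is smooth, projective, and contains $T=\mathbb{G}_m^2$ as a dense open subset, so translation induces a rational action $T\times Y\dashrightarrow Y$; yet this action does not extend regularly, for if it did $Y$ would be a toric variety, which it is not, the blow-up centre being non-invariant. The classical regularisation theorem of Weil and Rosenlicht only produces \emph{some} birational model on which the action becomes regular, not regularity on the given model. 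Since you invoke this claimed ingredient as the crux of both (1) and (2), the argument as written does not go through.

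The birational-contraction hypothesis excludes such counterexamples (note that $\mathbb{P}^2\dashrightarrow Y$ above is not a birational contraction, because $\pi^{-1}$ contracts the exceptional divisor), and a correct proof must exploit this structure rather than just the presence of a dense torus. The paper handles (1) by observing that for $H$ ample on $Y$, the pullback $\pi^*H$ is linearly equivalent to a $T$-invariant divisor on the toric $X$, so the section ring $R(X,\pi^*H)$ carries a natural $T$-grading; and $Y=\operatorname{Proj} R(X,\pi^*H)$ (here one uses that $\pi$ is a birational contraction and $H$ is ample), which is therefore toric. For (2) the paper builds an intermediate toric model $(Z,H)$ by extracting from $Y$ the toric valuations corresponding to the $\pi$-exceptional divisors, so that $X\dashrightarrow Z$ is an isomorphism in codimension one, and then reduces to (1). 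If you want to salvage the torus-action viewpoint, you would need to show directly that each $\sigma_t=\pi\circ\tau_t\circ\pi^{-1}$ is biregular on $Y$---say by checking $\sigma_t^*H\sim H$ for $H$ ample---and this again comes down to $\pi^*H$ being linearly equivalent to a $T$-invariant divisor on $X$; it cannot be deduced from the torus being dense in $Y$ alone.
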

\begin{proof} Suppose $(X,D)$ is toric.  If $H$ is an ample divisor on $Y$ then $\pi^*H$
is linearly equivalent to an invariant divisor.  As $Y=\Proj(X,\pi^*H)$ then both $(Y,G)$
and $\pi$ are toric.  This is (1).

Now suppose $(Y,G)$ is toric, the exceptional divisors of $\pi$ are components of $D$ and
correspond to toric valuations of $Y$.  We may find a toric pair $(Z,H)$ and a birational
morphism $f\colon\map Z.Y.$ whose only exceptional divisors correspond to these toric
valuations.  As the induced birational map $\rmap X.Z.$ is an isomorphism in codimension
one, it is a birational contraction.  Thus (2) follows from (1).
\end{proof} 

We will need an extension of \eqref{l_persist} to the case when $X$ and $Y$ are not
projective, only proper.  We start with:
\begin{lemma}\label{l_non} Let $(X,D)$ be a log pair over a field $k$ and let 
bars denote extension to the algebraic closure $\bar k$ of $k$. 

Then $(X,D)$ is toric if and only if $U=X-D$ is isomorphic to $\mathbb{G}_m^n$ and
$(\bar X,\bar D)$ is toric.
\end{lemma}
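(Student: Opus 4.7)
The forward direction is immediate: if $(X,D)$ is toric over $k$, then by definition $U = X - D$ is isomorphic to $\mathbb{G}_m^n$ over $k$, the self-multiplication on $U$ extends to $X$, and $D$ is the sum of the invariant divisors; base-changing this data to $\bar k$ shows $(\bar X, \bar D)$ is toric.

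For the reverse direction, assume $U = X - D \cong \mathbb{G}_m^n$ over $k$ and $(\bar X, \bar D)$ is toric over $\bar k$. First observe that $\bar U = \bar X - \bar D$ coincides with the dense open orbit of the toric $\bar k$-variety $\bar X$, since $\bar D$ is by hypothesis the sum of the invariant divisors. The hypothesis $U \cong \mathbb{G}_m^n$ over $k$ makes $U$ a split torus, so its cocharacter lattice $N \cong \mathbb{Z}^n$ carries the trivial action of $\operatorname{Gal}(\bar k/k)$. I would then argue that the fan $F$ in $N_\mathbb{R}$ describing $\bar X$, whose cones correspond to the $\bar U$-orbit closures on $\bar X$, is automatically Galois-invariant: since $\bar X$, $\bar D$, and $\bar U$ all arise by base change from $k$, the orbit closures are permuted by $\operatorname{Gal}(\bar k/k)$, and the triviality of the Galois action on $N$ forces each cone of $F$ to be preserved individually rather than merely permuted. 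Therefore $F$ defines a toric variety $X_F$ over $k$, and by Galois descent the canonical identification $\bar X_F \cong \bar X$ descends to $X_F \cong X$ over $k$, with $D$ as the invariant boundary.

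The main obstacle is to verify that the toric group structure on $\bar U$ inherited from $\bar X$ actually uses the cocharacter lattice $N$ of the split torus $U$, rather than a twisted version; equivalently, to match the toric multiplication on $\bar U$ with the base change of the split multiplication on $U$. After possibly adjusting the chosen isomorphism $U \cong \mathbb{G}_m^n$ by a translation on $\bar U$ (which extends to an automorphism of $\bar X$ via the given toric action) and by an element of $\operatorname{GL}_n(\mathbb{Z})$ (which is defined over $k$), one may arrange that these two group structures coincide. Once this matching is done, the descent is routine: since $\bar X$ is separated and $\bar U$ is dense in $\bar X$, the action $\bar U \times \bar X \to \bar X$ is the unique extension of its restriction to $\bar U \times \bar U$; because the latter is defined over $k$, the former is $\operatorname{Gal}(\bar k/k)$-equivariant, hence descends to the desired $k$-action of $U$ on $X$.
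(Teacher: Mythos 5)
Your proof is correct in substance, but it takes a noticeably longer route than the paper and contains a small internal redundancy. The paper's argument is one short chain: the multiplication $U\times U\to U$ (for the split structure over $k$) is viewed as a rational map $f\colon U\times X\dashrightarrow X$; its base change $\bar f\colon\bar U\times\bar X\dashrightarrow\bar X$ is a morphism because $(\bar X,\bar D)$ is toric; and a rational map whose base change is a morphism is itself a morphism. Your final paragraph is exactly this argument, rephrased in the language of Galois-equivariance and uniqueness of extensions over the dense open $\bar U\times\bar U$. But that makes the entire fan-and-descent construction in your second paragraph unnecessary: once the group structures are matched, you never need to produce $X_F$ or descend an isomorphism $\bar X_F\cong\bar X$ — you just descend the action map directly, as you do at the end. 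What the fan argument buys you is nothing that the last paragraph does not already give more cheaply, and it carries additional burdens: you would have to justify that the identification $\bar X_F\cong\bar X$ is itself Galois-equivariant, which again funnels back into the group-structure-matching problem.

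On that matching step, you have correctly isolated the real subtlety, which the paper passes over in silence: the toric group law on $\bar U$ coming from $\bar X$ need not a priori be the base change of the split law on $U$. The underlying fact you are implicitly using is that $\Aut(\mathbb{G}_m^n)$ as a variety is $\mathbb{G}_m^n\rtimes\gl_n(\mathbb{Z})$ (units in the Laurent ring are monomials), so the two group laws differ only by a translation, and translations extend to automorphisms of $\bar X$ by the toric action. One small slip: you propose to ``adjust the chosen isomorphism $U\cong\mathbb{G}_m^n$ by a translation on $\bar U$,'' but that translation is in general only defined over $\bar k$, so modifying the $k$-isomorphism by it destroys the $k$-structure you are trying to keep. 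The correct move is to adjust the toric isomorphism $\bar U\cong\mathbb{G}_m^n$ (a $\bar k$-object, so this is harmless) so that it becomes the base change of the split one, and then observe that the action extends for that structure too because the two sets of translations coincide. With that fix and with the fan paragraph deleted, your argument collapses to the paper's.
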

\begin{proof} One direction is clear.  

Otherwise if $U$ is a torus then it acts on itself and we get a morphism 
\[
\map U \times  U.U..
\]  
Now $U\times U$ is birational to $U\times X$ and so we get a rational map
\[
f\colon\rmap U \times  X.X..
\]
This induces a rational map 
\[
\bar f\colon\rmap \bar U \times  \bar X.\bar X..
\]
As $\bar X$ is toric, $\bar f$ is in fact a morphism.  But then $f$ is a morphism.
\end{proof}

\begin{lemma}\label{l_morphism} Let $k$ be any field.  Let $Y$ be a proper variety 
and let $\pi\colon\map Y.X.$ be a birational morphism of normal varieties.

If $Y$ is toric then both $X$ and $\pi$ are toric.
\end{lemma}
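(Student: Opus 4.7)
The plan is to show that the torus action on $Y$ descends along $\pi$, making $X$ a toric variety. First, since $\pi$ is a proper birational morphism with $X$ normal, Stein factorisation yields $\pi_{*}\mathcal{O}_{Y}=\mathcal{O}_{X}$; thus $\pi$ is a contraction morphism with geometrically connected fibres, and by flat base change so is $\operatorname{id}_{U}\times\pi\colon U\times Y \to U\times X$.

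Let $U\subset Y$ denote the open torus, with action $\mu\colon U\times Y\to Y$, and let $\phi_{t}\colon Y\to Y$ denote the automorphism corresponding to $t\in U$. To produce a $U$-action $\bar\mu\colon U\times X\to X$ making $\pi$ equivariant, it is enough to factor $\pi\circ\mu\colon U\times Y\to X$ through $\operatorname{id}_{U}\times\pi$. By the universal property of the contraction $\operatorname{id}_{U}\times\pi$, such a factorisation exists provided $\pi\circ\mu$ is constant on each of its fibres; equivalently, for every $t\in U$ and every fibre $F=\pi^{-1}(x)$ of $\pi$, the image $\pi(\phi_{t}(F))$ must be a single point.

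The heart of the proof is to verify this fibre condition. By Zariski's main theorem applied to the normal target $X$, the image $\pi(\operatorname{Exc}(\pi))$ has codimension at least two in $X$; a dimension count exploiting the density of the open orbit $U\subset Y$ then forces every exceptional prime divisor of $\pi$ to lie in the boundary $D=Y\setminus U$, and hence to be torus-invariant. Every positive-dimensional fibre $F$ lies in $\operatorname{Exc}(\pi)$, and a rigidity argument leveraging the connectedness of $F$ together with the group structure of $U$ then shows that $\phi_{t}(F)$ is contained in a single fibre of $\pi$ for every $t\in U$. This rigidity step is the main obstacle.

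Once $\bar\mu$ is constructed, the group-action axioms follow from those of $\mu$ via the surjectivity of $\pi$. The restriction $\pi|_{U}\colon U\to X$ is then $U$-equivariant with image a single $U$-orbit $\pi(U)\subset X$; birationality of $\pi$ forces the generic stabiliser to be trivial, and since $U$ is abelian, every stabiliser in $\pi(U)$ is trivial, giving $\pi(U)\cong U\cong \mathbb{G}_{m}^{n}$. The action of $U$ extends from this dense open orbit to all of $X$ via $\bar\mu$, exhibiting $X$ as a toric variety and $\pi$ as an equivariant, hence toric, morphism.
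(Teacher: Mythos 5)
Your overall strategy — factor $\pi\circ\mu$ through $\operatorname{id}_U\times\pi$ using the fact that $\pi$ is a contraction morphism, which reduces everything to showing that the torus permutes the fibres of $\pi$ — is the same as the paper's, but you stop short of the one step that actually carries the argument. You name it yourself: the ``rigidity argument'' showing $\phi_t(F)$ lies in a single fibre of $\pi$ ``is the main obstacle,'' and you leave it unproved. The digression about exceptional divisors lying in the toric boundary (via Zariski's main theorem and a dimension count) is true but does not, by itself, imply that $\phi_t$ preserves the partition of $\operatorname{Exc}(\pi)$ into fibres; knowing the torus stabilises a divisor gives no control over how it moves positive-dimensional fibres \emph{within} that divisor. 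So as written the proposal has a genuine gap exactly where the content of the lemma lives.

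Here is how the paper closes that gap. First it replaces $Y$ by a toric resolution so that $Y$ is smooth and projective; this makes $\pi$ projective, so the fibres of $\pi$ are proper and connected (Zariski's main theorem, $X$ normal) and one can speak of numerical classes of curves. Two points $y_1,y_2$ in a fibre are then joined by a connected chain of curves $C$ contracted by $\pi$. Since the torus $U$ is connected, the components of $u\cdot C$ are numerically equivalent to those of $C$ for every $u\in U$; being $\pi$-contracted is a numerical condition (zero degree against a $\pi$-ample class), so $u\cdot C$ is again contracted by $\pi$, and it is still connected. Hence $u\cdot y_1$ and $u\cdot y_2$ lie in a common fibre, which is precisely the rigidity you need. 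You should also note two further omissions: you nowhere reduce to $\pi$ projective, which is needed for the connectedness and numerical-equivalence steps (the hypothesis only gives $Y$ proper); and the statement is over an arbitrary field $k$, whereas your argument tacitly works over $\bar k$ — the paper adds a short descent step via \eqref{l_non}, checking that $\pi(U)\cong\mathbb{G}_m^n$ already over $k$ because $\bar\pi|_{\bar U}$ is an isomorphism onto an open subset of $\bar X$.
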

\begin{proof} We first prove this result using the additional hypothesis that $k$ is
algebraically closed.

Replacing $Y$ by a toric resolution, we may assume that $Y$ is
smooth and projective.  In particular $\pi$ is projective.  Let $U\subset Y$ be the torus.
By assumption there is an action
\[
\map U\times Y.Y. \qquad \text{given by} \qquad \map (u,y).u\cdot y..
\]
By composition there is a morphism
\[
f\colon\map U\times Y.X..
\]
Since $U\times X$ is birational to $U\times Y$ there is an induced rational map 
\[
g\colon\rmap U\times X.X..
\]
We check that $g$ is a morphism.   

Suppose that $y_1$ and $y_2$ are two points of $Y$, with the same image in $X$.  It
suffices to check that $f(u,y_1)=f(u,y_2)$ for all points $u\in U$.  As $\pi$ is
projective and birational and $X$ is normal the fibres of $\pi$ are connected.  Then $y_1$
and $y_2$ are connected by a chain of curves $C$ in $Y$ which are contracted by $\pi$.  As
the torus $U$ is connected the components of $C$ and of $u\cdot C$ are numerically
equivalent.  But then $u\cdot y_1$ and $u\cdot y_2$ belong to the connected curve
$u\cdot C$ which is contracted by $\pi$.  Thus $f(u,y_1)=f(u,y_2)$, for all $u\in U$ and
so there is an induced morphism $g$.

It is clear that $g$ defines an action of $U$ on $X$.  As $\pi$ is birational
$\map U.\pi(U).$ is an isomorphism.  Thus $X$ contains a torus and the natural action of
the torus extends to $U$.  Therefore $X$ is a toric variety.  

Now suppose that $k$ is not algebraically closed.  Let $U$ be the open subset of $Y$
isomorphic to $\mathbb{G}_m^n$.  As $\bar \pi \colon\map \bar Y.\bar X.$ is a toric
morphism the restriction of $\bar \pi$ to $\bar U$ is an isomorphism, so that
$\bar \pi(\bar U)$ is an open subset of $\bar X$.  But then the restriction of $\pi$ to
$U$ is an isomorphism and so $\pi(U)$ is an open subset of $X$ isomorphic to
$\mathbb{G}_m^n$.  It follows that $X$ is toric by \eqref{l_non} and it is easy to
conclude that $\pi$ is toric.
\end{proof}

We now return to assuming that the groundfield is algebraically closed.

\begin{lemma}\label{l_proper} Let $X$ be a proper variety and let $(X,\Delta)$ be a 
log canonical pair of complexity less than one such that $-(K_X+\Delta)$ is nef.  Suppose
that $(Y,\Gamma)$ is a divisorially log terminal model of $(X,\Delta)$,
$\pi\colon\rmap Y.X.$.

If $(Y,G)$ is a toric pair, where $G\geq \sship\Gamma.$ then $(X,D)$ is a toric pair,
where $D=\pi_*G\geq \sship\Delta.$.  
\end{lemma}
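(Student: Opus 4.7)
The plan is to apply Lemma~\ref{l_morphism} after exhibiting a proper toric variety $Z$ together with a birational morphism $\tau\colon Z\to X$; reading off $D$ is then routine once $X$ is known to be toric.

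First I would note that every $\pi$-exceptional divisor has log discrepancy zero with respect to $(X,\Delta)$ by the definition of a divisorially log terminal model, so it appears in $\Gamma$ with coefficient one and is thus a component of $\sship\Gamma.$, hence of $G$. In particular, the $\pi$-exceptional locus on $Y$ is a union of invariant divisors of the toric pair $(Y,G)$, and is preserved by the action of the big torus $T\subset Y$.

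Next I would build $Z$ by equivariantly resolving $\pi$. Since $X$ is proper and $Y$ is normal, the indeterminacy locus of $\pi$ has codimension at least two in $Y$; using that $\pi$ is a birational contraction with $Y$ $\mathbb{Q}$-factorial and projective, one checks that this indeterminacy locus is contained in the $\pi$-exceptional locus, and hence is $T$-invariant by the previous step. Applying $T$-equivariant resolution of indeterminacy (via repeated blowups along $T$-invariant centres, as in Sumihiro's theorem combined with equivariant Hironaka in characteristic zero) produces a proper toric variety $Z$ together with a $T$-equivariant birational morphism $\sigma\colon Z\to Y$ such that the composite $\tau:=\pi\circ\sigma\colon Z\to X$ is a morphism. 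This is the hard part of the proof; once it is accomplished, Lemma~\ref{l_morphism} applied to the birational morphism $\tau\colon Z\to X$ immediately yields that $X$ is toric and that $\tau$ is toric.

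Finally I would read off the boundary. Since $\tau$ is a toric morphism, the sum of the invariant divisors of $X$ equals $\tau_\ast$ applied to the sum of the non-$\tau$-exceptional invariant divisors of $Z$; via $\sigma$ these correspond precisely to the non-$\pi$-exceptional components of $G$ on $Y$, whose $\pi$-push-forward is exactly $D=\pi_\ast G$. Thus $D$ coincides with the full toric boundary on $X$ and $(X,D)$ is a toric pair. Moreover, any component $S$ of $\sship\Delta.$ has strict transform $S_Y$ on $Y$ satisfying $\mult_{S_Y}\Gamma>1/2$, so $S_Y$ is a component of $\sship\Gamma.$, hence of $G$; as $S$ is itself a divisor on $X$, the strict transform $S_Y$ is not $\pi$-exceptional, so $S=\pi_\ast S_Y$ appears as a component of $D$, giving $D\geq\sship\Delta.$ as desired.
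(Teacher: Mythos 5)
The key step in your argument — building a $T$-equivariant resolution of indeterminacy $\sigma\colon Z\to Y$ with $Z$ toric such that $\tau=\pi\circ\sigma$ becomes a morphism — has a fatal gap.

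You justify the claim that the indeterminacy locus of $\pi$ is $T$-invariant by arguing that it is contained in the union of $\pi$-exceptional \emph{divisors}, which you have shown to be $T$-invariant. But this containment is false in general: a divisorially log terminal model $\pi\colon\rmap Y.X.$ can be \emph{small}, i.e.\ an isomorphism in codimension one with no exceptional divisors whatsoever, while still having a non-empty indeterminacy locus. The paper itself points out in the introduction that this phenomenon is forced whenever $X$ is proper, $\mathbb Q$-factorial, kawamata log terminal and not projective (the flop of the smooth non-projective toric threefold from Fulton's book is the explicit example). In this setting your step one identifies an empty exceptional locus, while the indeterminacy locus is a non-trivial curve. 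Even aside from this, being contained in a union of $T$-invariant divisors would not make the indeterminacy locus itself $T$-invariant (a subset of an invariant set need not be invariant), and there is a more fundamental obstruction: $X$ carries no torus action, so the notion of ``$T$-equivariant resolution of indeterminacy'' of $\pi\colon\rmap Y.X.$ is not available — the ideal sheaf whose blowup would principalize $\pi$ involves $X$ and has no reason to be a monomial (i.e.\ torus-invariant) ideal on $Y$. Equivariant Hironaka governs resolution of $T$-invariant data on $Y$; it gives no control over a rational map to a target without $T$-action.

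The paper handles precisely this difficulty by reducing to the case where $\rmap Y.X.$ is an isomorphism in codimension one (passing through toric resolutions of $Y$, divisorially log terminal modifications of $(X,D)$, and \eqref{l_morphism}), and then invoking \cite[Corollary 2]{Fine89}, which says that a small modification of a toric variety is again toric. That input — toric-ness is determined in codimension one — is the genuine geometric content that your argument is missing; it is not recovered by equivariant resolution of singularities. Your reading of the conclusion ($D\geq\sship\Delta.$, identification of the toric boundary) is fine once $(X,D)$ is known to be toric, but the core step does not go through.
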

\begin{proof} It suffices to prove that $(X,D)$ is a toric pair.  Note that $(X,D)$ is log
canonical, $K_X+D$ is numerically trivial and 
\[
K_Y+G=\pi^*(K_X+D).
\]  
In particular a valuation $\nu$ is a log canonical place of $(Y,G)$ if and only if it is a
log canonical place of $(X,D)$.

Let $(Z,L)$ be a toric resolution of $(Y,G)$.  Then the exceptional divisors of
$\map Z.Y.$ have log discrepancy zero, so that the induced birational map $\rmap Z.X.$ is
a divisorially log terminal model of $(X,D)$.  Replacing $(Y,G)$ by $(Z,L)$ we may assume
that $Y$ is smooth.  Let $\map W.X.$ be a divisorially log terminal modification of
$(X,D)$.  We may write
\[
K_W+C=f^*(K_X+D),
\]
where $C$ is the strict transform of $D$ plus the exceptionals.  By \eqref{l_morphism} it
suffices to prove that $(W,C)$ is toric.  As $f$ only extracts divisors of log discrepancy
zero which also have log discrepancy zero for $(Y,G)$, possibly blowing up $Y$, we may
assume that the induced rational map $\rmap Y.W.$ is a birational contraction.  Replacing
$(X,D)$ by $(W,C)$ we may assume that $X$ is $\mathbb{Q}$-factorial and $X$ is kawamata
log terminal.

Let $\llist \nu.k.$ be the set of valuations corresponding to the exceptional divisors of
$\pi$.  Then the centres of $\llist \nu.k.$ are components of $G$ and so $\llist \nu.k.$
are log canonical places.  We may find a modification $f\colon\map W.X.$ such that the
exceptional divisors of $f$ are precisely the centres of $\llist \nu.k.$, where $W$ is
$\mathbb{Q}$-factorial and kawamata log terminal.  Replacing $(X,D)$ by $(W,C)$ once again
we may assume that $X$ is isomorphic to $Y$ in codimension one.

The result now follows by \cite[Corollary 2]{Fine89}.  
\end{proof}

\subsection{Calculus of the complexity}

In \S \ref{s_introduction} we defined the complexity $c(X,\Delta)$ and the absolute
complexity $\gamma(X,\Delta)$ for any log pair $(X,\Delta)$.  It is not hard to see that
the infimum is achieved for the complexity as there are only finitely many partitions of
the set of prime divisors contained in the support of $\Delta$.  It is immediate from the
definitions that
\[
c(X,\Delta) \leq \gamma(X,\Delta).
\]

\begin{lemma}\label{l_dlt} Let $X$ be a proper variety and let $(X,\Delta)$ be a log 
canonical pair.  

If $\pi\colon\rmap Y.X.$ is a divisorially log terminal model,
\[
K_Y+\Gamma=\pi^*(K_X+\Delta),
\]
then the complexity (respectively absolute complexity) of $(Y,\Gamma)$ is at most the
complexity (respectively absolute complexity) of $(X,\Delta)$.
\end{lemma}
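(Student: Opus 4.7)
The plan is to lift any decomposition of $\Delta$ on $X$ to a decomposition of $\Gamma$ on $Y$ of no greater complexity. Given a decomposition $\sum a_i S_i \leq \Delta$ realising the complexity of $(X,\Delta)$, I would set $\tilde S_i := \pi^{-1}_* S_i$ for the strict transform under the birational contraction $\pi$, and let $E_1,\ldots,E_e$ be the prime $\pi$-exceptional divisors on $Y$. Since $(Y,\Gamma)$ is a divisorially log terminal model, each $E_j$ has log discrepancy zero with respect to $(X,\Delta)$ and therefore appears in $\Gamma$ with coefficient one; combined with the prime-by-prime inequality $\sum a_i \tilde S_i \leq \pi^{-1}_* \Delta$, this yields the decomposition
\[
\sum_i a_i \tilde S_i + \sum_{j=1}^{e} E_j \;\leq\; \pi^{-1}_* \Delta + \sum_{j=1}^{e} E_j \;=\; \Gamma
\]
of $\Gamma$ on $Y$.

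Next I would compare the two complexities. The ambient dimension is unchanged and the sum of coefficients increases by exactly $e$. The heart of the matter is to bound the rank $r'$ of $\operatorname{span}(\tilde S_1,\ldots,\tilde S_k,E_1,\ldots,E_e)$ in $N^1(Y)$ (Weil divisors modulo algebraic equivalence) by $r+e$, where $r$ is the corresponding rank of $\operatorname{span}(S_i)$ on $X$. Because $\pi$ is a birational contraction, it is defined on an open subset of $Y$ whose complement has codimension at least two, so there is a well-defined pushforward $\pi_*\colon N^1(Y)\to N^1(X)$, surjective via strict transforms of prime divisors, with $\pi_*\tilde S_i = S_i$ and $\pi_* E_j = 0$. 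Hence the image of $\operatorname{span}(\tilde S_i, E_j)$ is exactly $\operatorname{span}(S_i)$, of rank $r$, while the restricted kernel lies inside $\operatorname{span}(E_j)$, of rank at most $e$. Thus $r' \leq r+e$, and the complexity of the lifted decomposition is at most
\[
n + r' - (d+e) \;\leq\; n + r - d.
\]
Taking the infimum over decompositions of $\Delta$ gives $c(Y,\Gamma) \leq c(X,\Delta)$.

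The argument for the absolute complexity is parallel: the inequality $\rho(Y) \leq \rho(X) + e$ follows from the same surjectivity-plus-kernel analysis of $\pi_*$, and the sum of coefficients of $\Gamma$ exceeds that of $\Delta$ by exactly $e$, so $\gamma(Y,\Gamma) = n+\rho(Y)-(d_\Delta+e) \leq n+\rho(X)-d_\Delta = \gamma(X,\Delta)$, where $d_\Delta$ denotes the sum of coefficients of $\Delta$.

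The only point requiring care is the numerical bound $r' \leq r+e$ (and analogously $\rho(Y) \leq \rho(X)+e$) at the level of algebraic rather than linear equivalence. This reduces to the standard fact that for a birational contraction between $\mathbb{Q}$-factorial proper varieties the kernel of $\pi_*$ on Weil divisors modulo algebraic equivalence is spanned by the classes of the $\pi$-exceptional divisors; one verifies this by passing to a common resolution $p\colon W\to Y$, $q\colon W\to X$ of $\pi$, applying proper pushforward of cycles, and using that every $p$-exceptional divisor is $q$-exceptional. All other steps are formal manipulations of the decomposition.
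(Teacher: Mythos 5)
Your proposal is correct and follows essentially the same route as the paper: lift the decomposition by taking strict transforms of the $S_i$ together with the $\pi$-exceptional divisors at coefficient one, observe the coefficient sum rises by exactly $e$, and bound the rank of the lifted span by $r+e$. The only difference is that the paper asserts the rank bound without comment, while you justify it via the pushforward $\pi_*$ on $N^1$ and the standard description of its kernel; note, though, that $X$ itself need not be $\mathbb{Q}$-factorial here, so that hypothesis should be dropped from your invocation of the standard fact (it is not needed for the localization argument).
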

\begin{proof} Let $\sum_{i=1}^m a_iS_i$ be a decomposition of $\Delta$.  Let $R_i$ be the
strict transform of $S_i$, $1\leq i\leq m$ and let $\llist E.k.$ be the exceptional
divisors.

 Let
\[
T_i=\begin{cases} R_i & \text{if $1\leq i\leq m$} \\  
                  E_{i-m} & \text{if $m<i\leq m+k$}
\end{cases}
\]
and
\[
b_i=\begin{cases} a_i & \text{if $1\leq i\leq m$} \\  
                  1   & \text{if $m<i\leq m+k$.}
\end{cases}
\]

Then $\sum b_iT_i$ is a decomposition of $\Gamma$.  The sum $e$ of the coefficients of
$\sum b_iT_i$ is $d+k$.  $\llist T.m+k.$, modulo algebraic equivalence, span a vector
space of dimension at most $r+k$.  Thus the complexity of the decomposition $\sum b_iT_i$
is at most
\[
n+(r+k)-(d+k)=n+r-d,
\]
which is the complexity of the decomposition given by $\sum a_iS_i$.  Thus the complexity
of $(Y,\Gamma)$ is at most the complexity of $(X,\Delta)$.  The absolute case is similar
and easier.
\end{proof}

\begin{definition}\label{d_local-complexity} Let $(x\in X,\Delta)$ be the germ of a log pair.  
A \textbf{local decomposition} of $\Delta$ is an expression of the form
\[
\sum a_iD_i\leq \Delta,
\]
where $D_i\geq 0$ are integral $\mathbb{Q}$-Cartier divisors and $a_i\geq 0$,
$1\leq i\leq k$.  The \textbf{local complexity} of this decomposition is $n-d$, where $n$
is the dimension of $X$ and $d$ is the sum of $\llist a.k.$.
\end{definition}

The following lemma establishes a local version of \eqref{t_toric}.  The proof is adapted
from the proof of \cite[18.22]{Kollaretal}:
\begin{lemma}\label{l_flipsabundance} Let $(x \in X,\Delta)$ be the germ of a log canonical 
pair where $X$ has dimension $n$ and let $\sum a_iD_i\leq \Delta$ be a local
decomposition.  Assume that $K_X$ and $\llist D.k.$ are Cartier.  

If $\gamma=n-\sum a_i=n-d$ is the local complexity then
\begin{enumerate}
\item $\gamma \geq 0$.
\item If $\gamma < 1$ then, possibly re-ordering $\llist D.k.$,
\[
(X,\alist D.+.m.)
\] 
is log smooth, where $m=n-\rfdown 2\gamma.$.   In addition 
\[
\sship \Delta.\leq \alist D.+.m..
\]  
\item If $\gamma < \frac 32$ then either $X$ is smooth at $x$ or has a $cA_l$ singularity
at $x$.
\end{enumerate}
\end{lemma}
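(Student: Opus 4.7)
The plan is to induct on $n=\dim X$. The core reduction is to identify a Cartier divisor $D$ through $x$ with coefficient $1$ in the decomposition, apply adjunction along $D$, and run the induction on the $(n-1)$-dimensional log canonical pair $(D,\Theta)$. The base case $n=1$ is immediate: normality forces $X$ smooth at $x$; each $D_i$ has the form $e_i[x]$ for a positive integer $e_i$; and log canonicity of $(X,\Delta)$ gives $\sum a_ie_i\le 1$, so $d\le 1=n$, yielding (1). Parts (2) and (3) are then immediate.

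For the inductive step, first dispose of the easy case $d<1$, where $\gamma>n-1\ge 0$ handles (1) and the conclusions of (2) and (3) are either vacuous or trivially verified. So assume $d\ge 1$. The heart of the argument is to produce, after possibly reordering, a Cartier divisor $D=D_1$ through $x$ of coefficient exactly $1$ in a possibly refined decomposition, with the remaining $D_i$ still Cartier through $x$ and total coefficient $d-1$. Following the strategy of \cite[18.22]{Kollaretal}, this uses a general-position argument in linear systems spanned by subsets of the $D_i$'s of total coefficient at least $1$, combining them into a single Cartier divisor of coefficient $1$ while preserving log canonicity of the whole pair.

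With $D$ in hand, adjunction gives
\[
K_D+\Theta=(K_X+\Delta)|_D,\qquad \Theta=\mathrm{Diff}_D(\Delta-D),
\]
so that $(D,\Theta)$ is a log canonical germ of dimension $n-1$. The restrictions $D_i|_D$ for $i\neq 1$ are Cartier through $x$ and define a local decomposition of $\Theta$ of total coefficient $d-1$, so the local complexity of $(D,\Theta)$ is $(n-1)-(d-1)=\gamma$, unchanged. The inductive hypothesis now yields (1) immediately. For (2), if $\gamma<1$, induction gives $(D,D_2|_D+\dots+D_m|_D)$ log smooth at $x$ with $m-1=(n-1)-\rfdown 2\gamma.$; since $D$ is a Cartier divisor in $X$ that is smooth at $x$, so is $X$, and $(X,D+D_2+\dots+D_m)$ is log smooth at $x$. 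The containment $\sship \Delta.\le D+D_2+\dots+D_m$ follows by combining the inductive bound $\sship \Theta.\le D_2|_D+\dots+D_m|_D$ with the observation that at a smooth point any prime component of $\Delta$ of coefficient $>1/2$ must appear among the coordinate hyperplanes of our log smooth model, on pain of violating log canonicity. For (3), if $\gamma<3/2$, induction forces $D$ to be either smooth or to carry a $cA_l$ singularity at $x$; because $D$ is Cartier in $X$, $X$ itself is then either smooth or has at worst a compound $A_{l'}$ singularity.

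The main obstacle is the Bertini-type reduction in the second paragraph: we need to produce the coefficient-one Cartier divisor while ensuring the remaining $D_i$ stay Cartier through $x$, the pair remains log canonical, and the sum of coefficients drops by exactly one. Adapting the combinatorial bookkeeping from \cite[18.22]{Kollaretal} to our decomposition setting, where the $D_i$ need not be prime and where contributions of $\mathrm{Diff}_D$ must be absorbed cleanly into the restricted decomposition, is the delicate step on which the proof hinges.
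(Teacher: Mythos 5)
Your high-level strategy — induct on the dimension, produce a Cartier divisor of coefficient one, pass to it by adjunction — matches the paper's. But the construction you leave as ``the delicate step on which the proof hinges'' is the entire content of the proof, and your sketch of it contains a misconception that would make a direct attempt fail.

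You frame the sought divisor as ``$D=D_1$ after possibly reordering,'' i.e.\ as one of the given $D_i$. The paper instead introduces a \emph{new} divisor $S$: fix a log resolution $\pi\colon Y\to X$, let $g_1,\dots,g_k$ be defining equations of $D_1,\dots,D_k$, and let $S$ be cut out by a \emph{general linear combination} $f$ of the $g_i$. Because $S$ specialises to each $D_i$, one gets $\mult_{E_j} S\le \mult_{E_j} D_i$ for every exceptional $E_j$ and every $i$, and $\pi$ is simultaneously a log resolution of $(X,\Delta+S)$. That multiplicity inequality is exactly what guarantees that for any $0\le b_i\le a_i$ with $\sum b_i=b\le 1$, the pair $(X,\Phi=bS+\sum(a_i-b_i)D_i)$ is still log canonical with the same local complexity $\gamma$. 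None of the original $D_i$ can be expected to enjoy these properties, so ``reordering'' is not enough; genericity is essential. Likewise the normality of $S$, which you need before you can invoke adjunction and induction, is established by a specific argument: when $d>1$ one chooses the $b_i$ so that $\sum_{i:b_i\neq 0}a_i>1$, and then at any codimension-two $V$ some $D_i$ with $b_i\neq 0$ is either absent or smooth at the generic point of $V$, forcing $S$ (a general member of the system they span) to be regular in codimension one. Your proposal gestures at ``a general-position argument in linear systems spanned by subsets of the $D_i$'s of total coefficient at least $1$'' but does not identify either of these two mechanisms, and without them the inductive machine does not start. You also worry about keeping ``the remaining $D_i$ Cartier through $x$'' and absorbing $\operatorname{Diff}$ contributions; in the paper's setup this is automatic, since the $D_i$ are already Cartier on $X$ and hence restrict to Cartier divisors on the Cartier hypersurface $S$, and the difference $\Psi-\sum(a_i-b_i)D_i|_S\ge 0$ simply enlarges the boundary, which only helps.

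In short: correct skeleton, but the load-bearing lemma — constructing $S$ as a general combination of the $g_i$ so that $\Phi$ stays log canonical and $S$ is normal — is missing, and replacing it with ``one of the $D_i$ after reordering'' is a wrong turn, not a shortcut.
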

\begin{proof} We proceed by induction on $n$.  All claims are clear for $n=1$ and so 
we assume that $n\geq 2$.  

Fix a log resolution $\pi\colon\map Y.X.$ of $(X,\Delta)$, with exceptional divisors
$\llist E.l.$.  Let $f$ be a general linear combination of $\llist g.k.$, the functions
defining $\llist D.k.$.  Let $S$ be the divisor cut out by $f$.  As $S$ specialises to
$D_i$, for each $i$, it follows that
\[
\mult_{E_j} S\leq \mult_{E_j} D_i.
\]
for each $1\leq i\leq k$ and $1\leq j\leq l$.  It also follows that $\pi$ is a log
resolution of $(X,\Delta+S)$.  For any $0 \leq b_i \leq a_i$ such that $\sum b_i=b\leq 1$,
it follows that the pair $(X,\Phi=bS+\sum (a_i-b_i)D_i)$ is log canonical, and the local
complexity of the indicated decomposition is $\gamma$.

Suppose that $0<b_i\neq a_i$, and $b=1$, so that
\[
\sum_{i:b_i\neq 0} a_i>1.
\]
Let $V\subset X$ be a codimension two subset.  As the pair $(X,\Delta)$ is log canonical
in a neighbourhood of the generic point of $V$ there is an index $i$ such that $b_i\neq 0$
and either $V$ is not contained in $D_i$ or $D_i$ is smooth at the generic point of $V$.
In this case $S$ is normal.  In particular if $d>1$ we may pick $\llist b.k.$ so that
$b=1$ and $S$ is normal.

As $S$ is Cartier and normal, $X$ is smooth in codimension two along $S$.  Therefore we
may write
\[
(K_X+\Phi)|_S=K_S+\Psi,
\]
where $(S,\sum (a_i-b_i)D_i|_S\leq \Psi)$ is log canonical and the local complexity is at
most $\gamma$.

Now suppose that $\gamma<1$.  As $n\geq 2$ then $d>1$ and so we may choose $\llist b.k.$
so that $S$ is normal.  By induction $S$ is smooth.  As $S$ is Cartier $X$ is smooth.
Then $\mult _x\Delta\leq n$ as $(X,\Delta)$ is log canonical.  In particular every
component of $\rdown\Delta.$ is smooth.  

(1) and (2) follow by induction on $n$.

Now suppose that $\gamma<3/2$.  If $n\geq 3$ then $d>1$.  By definition of compound
singularities it suffices to prove that $S$ has a $cA_l$ singularity.  By induction we may
assume that $n=2$ and we have to show that $X$ has an $A_l$ singularity.  As $K_X$ is
Cartier and $X$ is a normal surface, $X$ is Gorenstein.  As $\Delta\neq 0$ it follows that
$X$ is kawamata log terminal so that $X$ is canonical.  Thus $X$ has du Val singularities.
We may also assume that $\Delta=dD$, where $D=S$ is a prime Cartier divisor.

If $\pi\colon\map Y.X.$ is the minimal desingularisation of the surface $X$ then
$K_Y=\pi^*K_X$.  Let $G$ be the strict transform of $D$ and let $\llist E.l.$ be the
exceptional divisors.  Since $D$ is Cartier, we have
\[
f^*D=G+\sum m_iE_i
\]
where $\llist m.l.$ are positive integers.

The log discrepancy of $E_i$ with respect to $K_X+\Delta=K_X+dS$ is
\[
1-dm_i.
\]
As $(X,\Delta)$ is log canonical and $d>\frac 12$, we must have $m_i=1$, for all
$1\leq i\leq l$.  Hence
\[
0=f^*D\cdot E_j = (G +\sum E_i) \cdot E_j\geq \delta(E_j)-2
\]
where $\delta(E_j)$ is the degree of the vertex corresponding to $E_j$ in the dual graph
of the resolution.  It follows that every vertex in the dual graph has degree at most $2$
and so $X$ has an $A_l$ singularity.
\end{proof} 

\subsection{Mori Dream Spaces}

Recall, cf. \cite{HK00}, 
\begin{definition}\label{d_mds}  Let $X$ be a $\mathbb{Q}$-factorial normal projective variety.  We say
that $X$ is a \textbf{Mori dream space} if the following conditions hold:
\begin{enumerate}
\item X is $\mathbb{Q}$-factorial and $\Pic(X)_\mathbb{Q}= N^1(X)_\mathbb{Q}$;
\item the cone of nef divisors, $\nef(X)$, is the affine hull of finitely many semi-ample divisors;
\item there exist finitely many small birational maps $f_i\colon\rmap X.X_i.$, such that
each $X_i$ satisfies (1) and (2) and the closure of the cone of movable divisors,
$\mov(X)$, is the union of the cones $f_i^*\nef(X_i)$.
\end{enumerate}
\end{definition}

The Cox ring of a variety with finitely generated class group was originally defined in
\cite{HK00}; it is unique but ignores torsion in the class group.  Subsequently
\cite{Hausen08} gave a refined definition which takes into account torsion in the class
group.  As we would like to allow torsion we will use this definition of the Cox ring.  

We will need some of the basic properties of the Cox ring, cf. \cite{Hausen08} for more
details and proofs.  The most important result is that $X$ is a Mori dream space if and
only if the ring $R=\Cox(X)$ is a finitely generated $\mathbb{C}$-algebra.  One can use
this to give many examples of Mori dream spaces:
\begin{lemma}\label{l_mori} Let $X$ be a projective variety.  

The following are equivalent
\begin{enumerate} 
\item We may find a kawamata log terminal pair $(X,\Delta)$ such that 
$-(K_X+\Delta)$ is ample.  
\item We may find a kawamata log terminal pair $(X,\Delta)$ such that 
$-(K_X+\Delta)$ is big and nef.
\item We may find a kawamata log terminal pair $(X,\Delta)$ such that 
$K_X+\Delta$ is numerically trivial and $\Delta$ is big.  
\end{enumerate} 
In particular, if $X$ is $\mathbb{Q}$-factorial then $X$ is a Mori dream space.
\end{lemma}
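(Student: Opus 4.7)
The plan is to prove the cyclic chain $(1)\Rightarrow(2)\Rightarrow(3)\Rightarrow(1)$, noting that $(1)\Rightarrow(2)$ is immediate since an ample divisor is big and nef. For $(2)\Rightarrow(1)$, the key tool is Kodaira's lemma: since $-(K_X+\Delta)$ is big and nef, I can write $-(K_X+\Delta)\sim_\mathbb{Q} A+E$ with $A$ ample and $E\geq 0$. Setting $\Delta_t=\Delta+tE$ and rearranging, for $t\in(0,1)$,
\[
-(K_X+\Delta_t)\sim_\mathbb{Q}(1-t)\bigl(-(K_X+\Delta)\bigr)+tA,
\]
which is the sum of a nef divisor and an ample one, hence ample; and $(X,\Delta_t)$ remains kawamata log terminal for $t$ sufficiently small since klt is an open condition under small perturbations by an effective divisor.

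For $(1)\Rightarrow(3)$, I would pick a sufficiently divisible $m$ so that $-m(K_X+\Delta)$ is basepoint free (by the basepoint-free theorem applied to the ample divisor $-(K_X+\Delta)$), and take a general element $H\in|{-m(K_X+\Delta)}|$. Bertini together with the general position of $H$ guarantees that $(X,\Delta+\tfrac{1}{m}H)$ is klt; by construction $K_X+\Delta+\tfrac{1}{m}H\sim_\mathbb{Q} 0$, and the boundary is big since $\tfrac{1}{m}H$ is ample. For $(3)\Rightarrow(1)$, I again invoke Kodaira's lemma, this time on the big divisor $\Delta$, and write $\Delta\sim_\mathbb{Q} A+E$ with $A$ ample and $E\geq 0$. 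For small $\epsilon>0$, the pair $(X,\Delta''=(1-\epsilon)\Delta+\epsilon E)$ is klt, and using $K_X+\Delta\equiv 0$ one computes
\[
-(K_X+\Delta'')\equiv\epsilon(\Delta-E)\sim_\mathbb{Q}\epsilon A,
\]
which is ample.

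For the final assertion, under the $\mathbb{Q}$-factorial hypothesis any of the equivalent conditions puts $X$ in the setting of \cite[Corollary 1.3.2]{BCHM10}, where it is proved that a $\mathbb{Q}$-factorial log Fano variety is a Mori dream space. The only technical subtlety throughout is tracking the klt condition under the Kodaira-lemma perturbations; I do not expect this to cause difficulty, since klt is open in the coefficients of the boundary, so for $t$ or $\epsilon$ small enough the perturbed pair automatically remains klt.
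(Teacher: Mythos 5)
Your argument is correct and uses the same essential ingredients as the paper — Kodaira's lemma and openness of the klt condition under small perturbation of the boundary — but it reorganizes the implications. The paper closes the single cycle $(1)\Rightarrow(2)\Rightarrow(3)\Rightarrow(1)$, proving $(2)\Rightarrow(3)$ by invoking the base point free theorem to get that $-(K_X+\Delta)$ is semiample, then replacing it by a general effective $B\sim_{\mathbb{R}}-(K_X+\Delta)$ with $(X,\Delta+B)$ klt; the step $(3)\Rightarrow(1)$ is Kodaira's lemma applied to the big divisor $\Delta$, exactly as in your argument. You instead prove $(2)\Rightarrow(1)$ directly by Kodaira's lemma applied to $-(K_X+\Delta)$ itself, a pleasant shortcut that avoids the Kawamata--Shokurov theorem entirely, and then supply $(1)\Rightarrow(3)$ by taking a general member of $|{-}m(K_X+\Delta)|$. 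The one point to tidy in your $(1)\Rightarrow(3)$ step is that $\Delta$ is allowed to have real coefficients, so $-m(K_X+\Delta)$ need not be Cartier for any integer $m$; the fix is to work with $\mathbb{R}$-linear equivalence and pick a general $B\geq 0$ with $B\sim_{\mathbb{R}}-(K_X+\Delta)$ and $(X,\Delta+B)$ klt, precisely the move the paper makes. With that adjustment the two proofs are interchangeable: yours is one implication longer but technically lighter, since the only cohomological input is Kodaira's lemma.
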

\begin{proof} (1) clearly implies (2).  If $(X,\Delta)$ is kawamata log terminal and
$-(K_X+\Delta)$ is big and nef then $-(K_X+\Delta)$ is semiample.  Then we may find
$B\geq 0$, $B\sim_{\mathbb{R}} -(K_X+\Delta)$ such that $(X,\Delta+B)$ is kawamata log
terminal.  Thus (2) implies (3).

Suppose that $(X,\Delta)$ is kawamata log terminal, $K_X+\Delta$ is numerically trivial
and $\Delta$ is big.  We may find an ample $\mathbb{Q}$-divisor $A$ and a divisor
$B\geq 0$ such that
\[
\Delta \sim_{\mathbb{R}} A+B.
\]
Pick $\epsilon>0$ such that $(X,\Delta+\epsilon B)$ is kawamata log terminal.  Then 
\[
-(K_X+(1-\epsilon)\Delta+\epsilon B) \sim_{\mathbb{R}} \epsilon A.
\]
As $(X,(1-\epsilon)\Delta+\epsilon B)$ is kawamata log terminal, (3) implies (1).

The last assertion is \cite[1.3.2]{BCHM10}.
\end{proof}

If $X$ is a Mori dream space then let $Y=\Spec R$.  If $D$ is a prime divisor on $X$ one
can associate a Cartier divisor $G$ on $Y$.  The ring $R$ is naturally graded by the class
group $A_{n-1}(X)$.  There is a unique closed point $p\in Y$ corresponding to the unique
maximal homogeneous ideal and $p\in G$.  The grading corresponds to an action of the
diagonalisable group $H=\Spec \mathbb{C}[A_{n-1}(X)]$.  $X$ is a geometric quotient of $Y$
by the action of $H$ and the divisor $D$ is naturally the image of the associated Cartier
divisor $G$ on $Y$.

We will need a small strengthening of \cite[2.10]{HK00}:
\begin{theorem}\label{t_polynomial-ring} Let $X$ be a $\mathbb{Q}$-factorial projective
variety.

Then $X$ is toric if and only if the Cox ring is a polynomial ring generated by
$\dim X+\rho(X)$ variables, in which case the invariant divisors correspond to the
coordinate hyperplanes.
\end{theorem}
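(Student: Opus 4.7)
The plan is to exploit the geometric realisation of the Cox ring recalled in the preceding paragraph: $Y = \Spec R$ carries an action of the diagonalisable group $H = \Spec \mathbb{C}[A_{n-1}(X)]$ and $X$ is recovered as the geometric quotient of $Y$ by $H$. The core idea is that when $R = \mathbb{C}[x_1,\ldots,x_m]$ with $m = n+\rho$, the affine space $Y = \mathbb{A}^m$ is itself toric with big torus $T = (\mathbb{G}_m)^m$, the group $H$ embeds diagonally in $T$, and the induced torus $T/H$ acts on $X$ with a dense open orbit.

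For the non-trivial direction, suppose $R$ is a polynomial ring in $m = n+\rho$ variables. Each generator $x_i$ is $A_{n-1}(X)$-homogeneous of some degree $\alpha_i$, so the $H$-action on $Y = \mathbb{A}^m$ is diagonal. The classes $\alpha_i$ generate $A_{n-1}(X)$, since they generate the Cox ring which is graded by $A_{n-1}(X)$, so the induced homomorphism $H \to T$ is a closed embedding. Because $X$ is $\mathbb{Q}$-factorial we have $\Cl(X)_{\mathbb{Q}} = \Pic(X)_{\mathbb{Q}}$, hence $\dim H = \rk A_{n-1}(X) = \rho$, and $T/H$ is an $n$-dimensional torus (a quotient of a torus by any diagonalisable closed subgroup is again a torus of the complementary dimension). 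Passing to the quotient $X$, the induced action of $T/H$ has a dense open orbit, giving $X$ the structure of a toric variety. The $T$-invariant prime divisors of $\mathbb{A}^m$ are precisely the coordinate hyperplanes $(x_i=0)$; each is $H$-stable and therefore descends to a $T/H$-invariant prime divisor on $X$, and conversely every invariant prime divisor on $X$ arises in this way because in the refined Cox ring of \cite{Hausen08} every Weil divisor on $X$ pulls back to a Cartier divisor on $Y$ supported on the coordinate hyperplanes.

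For the converse, if $X$ is a $\mathbb{Q}$-factorial projective toric variety with fan $\Sigma$, then Cox's original construction identifies $\Cox(X)$ with the polynomial ring $\mathbb{C}[x_{\rho}]_{\rho \in \Sigma(1)}$ graded by the class group, with the coordinate hyperplanes corresponding to the invariant prime divisors; $\mathbb{Q}$-factoriality forces $\Sigma$ to be simplicial, and the exact sequence $0 \to M \to \mathbb{Z}^{\Sigma(1)} \to \Cl(X) \to 0$ then gives $|\Sigma(1)| = n + \rho$. The main obstacle I anticipate is cleanly tracking the torsion in $A_{n-1}(X)$ through Hausen's refined Cox ring: one must verify that $H$, including its finite part, embeds as a closed subgroup of $T$, that the geometric quotient of $Y$ by this embedded $H$ coincides with the Cox-theoretic quotient producing $X$, and that the identification of invariant prime divisors with coordinate hyperplanes is compatible with the refined grading rather than only the rational one used in \cite{HK00}.
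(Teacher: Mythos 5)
Your proposal is correct and follows essentially the same route as the paper: in both cases the non-trivial direction hinges on realizing $X$ as a GIT quotient of $Y=\operatorname{Spec}\Cox(X)=\mathbb{A}^m$ by the diagonalisable group $H=\operatorname{Spec}\mathbb{C}[A_{n-1}(X)]$, and the forward direction is Cox's classical identification. The paper's proof is considerably terser, appealing directly to \cite[2.2]{Hausen08} and simply asserting that a GIT quotient of affine space by a diagonalisable group is toric, whereas you unpack the missing step by exhibiting the residual torus $T/H$ of dimension $n$, its dense free orbit, and the correspondence between coordinate hyperplanes and invariant divisors — all details the paper implicitly delegates to the reference.
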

\begin{proof} If $X$ is a toric variety then the Cox ring is the homogeneous coordinate
ring of $X$ and the Cox ring is a polynomial ring with $\dim X+\rho(X)$ variables, which
correspond to the invariant divisors on $X$, cf. the discussion after the proof of
\cite[2.2]{Hausen08}.

Now suppose that the Cox ring is a polynomial ring.  Then $X$ is a Mori dream space.  In
particular its divisor class group $A_{n-1}(X)$ is a finitely generated abelian group and
the Cox ring is graded by the class group.  In this case $X$ is the GIT quotient of affine
space $\af m.$ by a diagonalisable group $H$, the product of a torus and a finite abelian
group, \cite[2.2]{Hausen08}.  Therefore $X$ is a toric variety.
\end{proof}

We will also need:
\begin{lemma}\label{l_cartier} Let $X$ be a $\mathbb{Q}$-factorial projective variety.
Suppose that $X$ is a Mori dream space and let $R=\Cox (X)$ be the Cox ring.

If $Y=\Spec R$ then $K_Y$ is Cartier.  In particular if $Y$ is Cohen-Macaulay then $Y$ is
Gorenstein.
\end{lemma}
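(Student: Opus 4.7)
The plan is to identify $\omega_Y$ with the Cartier divisor on $Y$ corresponding to $K_X$ under Hausen's refined Cox ring correspondence, and then invoke reflexivity. The central feature of \cite{Hausen08} that I will use, and which distinguishes the refined Cox ring from the \cite{HK00} version, is that every Weil divisor class on $X$ lifts to a Cartier divisor class on $Y$, not merely every Cartier divisor class on $X$. In particular, the Weil divisor class $K_X$ determines an invertible sheaf $\mathcal{L}$ on $Y$, and my goal reduces to proving the isomorphism $\omega_Y \cong \mathcal{L}$.

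I will verify this isomorphism on a big open set. Let $\hat{Y} \subset Y$ be the open subset on which the good quotient $\pi\colon \hat{Y} \to X$ by $H=\Spec \mathbb{C}[A_{n-1}(X)]$ is defined; its complement has codimension at least two in $Y$ by the construction of the Cox ring. Let $U \subset X$ be the smooth locus, which is a big open subset of $X$ since $X$ is normal. Over $U$ the restriction of $\pi$ is a principal $H$-bundle, so $\tilde{Y} := \pi^{-1}(U)$ is smooth and a big open subset of $Y$.

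On $\tilde{Y}$ I will compute directly. Because $H$ is diagonalisable, hence commutative, its adjoint action on the Lie algebra $\mathfrak{h}$ is trivial, and the relative tangent bundle of the principal bundle $\tilde{Y} \to U$ is the trivial bundle $\mathfrak{h}\otimes\mathcal{O}_{\tilde Y}$. Consequently its determinant is the trivial $H$-equivariant line bundle, so $\omega_{\tilde{Y}} \cong \pi^* \omega_U$. Unwinding Hausen's definition, this pullback is precisely the restriction of $\mathcal{L}$ to $\tilde{Y}$, and so $\omega_Y$ and $\mathcal{L}$ agree on $\tilde Y$.

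Since both $\omega_Y$ and $\mathcal{L}$ are reflexive sheaves on the normal variety $Y$, their agreement on the big open set $\tilde{Y}$ forces $\omega_Y \cong \mathcal{L}$ globally, so $K_Y$ is Cartier. The ``in particular'' clause is then immediate from the standard fact that a Cohen--Macaulay scheme with invertible dualising sheaf is Gorenstein. I expect the main subtlety to be the last matching step in the previous paragraph: verifying that the Hausen correspondence applied to the class $K_X$ really produces $\pi^* \omega_U$ on the principal bundle locus, rather than some twist of it by an $H$-character. Handling this cleanly will require direct engagement with Hausen's construction rather than a purely formal argument.
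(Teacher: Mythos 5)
Your proposal takes essentially the same route as the paper: both pass to the universal $H$-torsor over a big open subset of $Y$, use that $H$ is abelian (torus times finite) to conclude that the relative canonical of the torsor is trivial so that $K$ of the torsor is the pullback of $K_X$, and then invoke Hausen's result that pullbacks of Weil divisors on $X$ give Cartier divisors on $Y$. The "main subtlety" you flag at the end — matching the equivariant structure on $\pi^*\omega_U$ with the one Hausen's correspondence assigns to $K_X$ — is precisely the point the paper handles by citing \cite[2.2]{Hausen08} (reformulated there as: $K_Y$ is Cartier once it is $H$-invariant), so you have correctly identified where the real content lies.
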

\begin{proof} Let $H=\Spec \mathbb{C}[A_{n-1}(X)]$.  According to \cite[2.2]{Hausen08} it
suffices to check that $K_Y$ is $H$-invariant.  It also follows from \cite[2.2]{Hausen08}
that there is a universal $H$-torsor $q\colon\map \hat X.X.$ and it suffices to prove that
$K_{\hat X}$ is $H$-invariant.

The group $H$ decomposes as a torus and a finite abelian group.  The morphism $q$ then
decomposes as a torus bundle followed by an \'etale cover.  It follows that $K_{\hat X}=q^*K_X$ 
so that $K_{\hat X}$ is $H$-invariant.
\end{proof}

\makeatletter
\renewcommand{\thetheorem}{\thesection.\arabic{theorem}}
\@addtoreset{theorem}{section}
\makeatother

\section{Local to global}
\label{s_local}

\begin{theorem}\label{t_mdscase} Let $X$ be a $\mathbb{Q}$-factorial projective variety 
with kawamata log terminal singularities and let $(X,\Delta)$ be a log canonical pair.
Suppose that $-(K_X+\Delta)$ is nef and $\sum a_iS_i$ is a decomposition of complexity $c$
less than one for $\Delta$.

If $X$ is a Mori dream space then there is a divisor $D$ such that $(X,D)$ is a toric
pair, where $D\geq \sship \Delta.$ and all but $\rdown 2c.$ components of $D$ are elements
of the set $\{\, S_i \,|\, 1\leq i\leq k \,\}$.
\end{theorem}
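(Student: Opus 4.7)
The strategy is to transfer the problem to the affine variety $Y = \Spec R$ where $R = \Cox(X)$, apply the local complexity result \eqref{l_flipsabundance} at the unique closed homogeneous point $p \in Y$, and conclude via \eqref{t_polynomial-ring} that $X$ is toric. Using the refined Cox ring of \cite{Hausen08}, each $S_i$ lifts to an $H$-invariant Cartier divisor $T_i$ on $Y$ through $p$, where $H = \Spec \mathbb{C}[A_{n-1}(X)]$. The induced log pair $(Y,\Gamma)$ is log canonical by \cite{Brown13, GOST15, KO12}, $K_Y$ is Cartier by \eqref{l_cartier}, and $\sum a_i T_i$ is a local decomposition of $\Gamma$ at $p$. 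Since $\dim Y = n + \rho$ with $\rho = \rho(X)$, the local complexity of this decomposition is
\[
(n + \rho) - d \;=\; c + (\rho - r).
\]

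The crux is to establish $r = \rho$, which would bring the local complexity down to exactly $c < 1$. I would prove this by descending induction on $\rho - r$. In the base case, where all $S_i$ are proportional and so span a single ray, one shows $\rho(X) = 1$: running an appropriate MMP (after a small perturbation to enter the kawamata log terminal setting so that \eqref{l_mmp} applies) must end in either a Mori fibre space or a divisorial contraction, and the proportionality of the $S_i$ together with $d > n + r - 1$ forces the intersection of $\sum a_i S_i$ with a general extremal curve $C$ to exceed what $-K_X\cdot C$ can absorb, contradicting the nefness of $-(K_X+\Delta)$ unless $\rho = 1 = r$. In the inductive step with $r < \rho$ and more than one ray spanned, pick two components $D_1 = S_{i_1}$ and $D_2 = S_{i_2}$ with neither $m_1 D_1 - m_2 D_2$ nor $m_2 D_2 - m_1 D_1$ pseudo-effective for suitable positive integers $m_1, m_2$, and form the $\mathbb{P}^1$-bundle over $X$ given by the direct sum of the line bundles corresponding to these two divisors. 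Its two tautological sections are contractible by construction, producing a new Mori dream space with an analogous log pair on which the rank deficit $\rho - r$ has strictly dropped, so induction applies.

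Once $r = \rho$ is established, \eqref{l_flipsabundance}(2) applies at $p \in Y$ with local complexity $c$, yielding, after reordering, that $(Y, T_1 + \cdots + T_m)$ is log smooth at $p$ with $m = (n+\rho) - \rfdown 2c.$ and $\sship \Gamma. \leq T_1 + \cdots + T_m$. Smoothness of $Y$ at $p$, together with graded Nakayama (since $p$ corresponds to the maximal homogeneous ideal of the $A_{n-1}(X)$-graded ring $R$), lifts a regular system of parameters at $p$ to $n + \rho$ homogeneous generators of $R$; by dimension count they are algebraically independent, so $R$ is a polynomial ring in $n + \rho$ variables. \eqref{t_polynomial-ring} then shows that $X$ is toric and identifies its invariant divisors with the images under the geometric quotient of the $n + \rho$ coordinate hyperplanes of $Y \cong \mathbb{A}^{n+\rho}$. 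The $m$ hyperplanes supporting $T_1, \ldots, T_m$ descend to elements of $\{\, S_i \,\}$ on $X$, so at most $n + \rho - m = \rfdown 2c.$ invariant divisors lie outside this set; the containment $\sship \Delta. \leq D$ follows from $\sship \Gamma. \leq T_1 + \cdots + T_m$ via the quotient by $H$.

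The main obstacle is the descending induction establishing $r = \rho$. The base case requires careful MMP bookkeeping to rule out every possible final step (divisorial contractions of components of $\sum a_i S_i$, divisorial contractions of other divisors, and Mori fibre spaces of several fibre types), while the inductive step depends on the existence of suitable $D_1, D_2$ with the required non-pseudo-effectivity property and on checking that the $\mathbb{P}^1$-bundle construction genuinely reduces the problem—transporting the decomposition to the new model, preserving the complexity bound, and keeping the anti-log-canonical class nef.
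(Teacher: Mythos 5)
Your proposal follows essentially the same route as the paper: pass to $Y=\Spec\Cox(X)$, establish $r=\rho$, apply \eqref{l_flipsabundance} at the irrelevant point $p$, and conclude via \eqref{t_polynomial-ring}. The rank argument you sketch inline is the content of the paper's \eqref{t_decomp} together with \eqref{l_sum}, including the $\mathbb{P}^1$-bundle trick and the MMP-to-Mori-fibre-space base case. You also fill in a small step the paper leaves implicit, namely why local regularity of $Y$ at $p$ forces $\Cox(X)$ to be a polynomial ring via graded Nakayama; that addition is welcome.

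There are, however, two misstatements worth flagging. First, you propose \emph{descending induction on $\rho-r$}, claiming the deficit ``strictly drops'' after contracting the two sections. It does not: passing from $X$ to the $\mathbb{P}^1$-bundle raises $\rho$ by one, and contracting both sections lowers it by two, so $\rho(Z)=\rho(X)-1$, while the span of the pushed-forward $C_i$ drops to $r-1$ because $m_1C_1-m_2C_2\sim 0$. Thus $\rho-r$ is \emph{invariant} under the construction, and an induction on the deficit never terminates. The correct invariant is $r$ itself, with base case $r=1$ handled by \eqref{l_sum} (which then forces $\rho=1$); in the inductive step one concludes $r-1=\rho(Z)$ and hence $r=\rho(X)$. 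Second, your rank argument is stated under the hypothesis $-(K_X+\Delta)$ nef, but both \eqref{l_sum} and the adjunction computation on the $\mathbb{P}^1$-bundle require $K_X+\Delta$ numerically trivial. You need the preliminary reduction, available because $X$ is a Mori dream space and $-(K_X+\Delta)$ is therefore semiample: choose $B\sim_{\mathbb{R}}-(K_X+\Delta)$ with $(X,\Delta+B)$ log canonical and work with $(X,\Delta+B)$, observing that $\sum a_iS_i$ is still a decomposition. Both points are fixable, and once corrected your argument coincides with the paper's.
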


\begin{theorem}\label{t_decomp} Let $X$ be a $\mathbb{Q}$-factorial kawamata log terminal 
projective variety.  Suppose that $(X,\Delta)$ is a log canonical pair such that
$K_X+\Delta$ is numerically trivial.  Let $\sum a_i S_i$ be a decomposition of $\Delta$
with complexity less than $1$.

If $X$ is a Mori dream space then $\llist S.k.$ generate $A_{n-1}(X)_{\mathbb{Q}}$.
\end{theorem}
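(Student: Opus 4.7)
The plan is to induct on the rank $r$ of the span of $\{S_1,\dots,S_k\}$ in $N^1(X)_{\mathbb{Q}}$. Since $X$ is a $\mathbb{Q}$-factorial Mori dream space, $A_{n-1}(X)_{\mathbb{Q}}=N^1(X)_{\mathbb{Q}}$, so the conclusion is equivalent to the equality $r=\rho(X)$. The inequality $r\leq \rho(X)$ is automatic, so the content is the reverse direction. The arithmetic input throughout will be the complexity hypothesis, rewritten as $d:=\sum a_i>n+r-1$.

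\textbf{Base case $r=1$.} All the $S_i$ are proportional to a single class $H\in N^1(X)_{\mathbb{Q}}$, so $\sum a_i S_i\equiv d\,H$ with $d>n$. I would argue by contradiction, assuming $\rho(X)\geq 2$. Since $-K_X\equiv \Delta$ is pseudo-effective, run the MMP using \eqref{l_mmp}; the program terminates with either a divisorial contraction or a Mori fibre space. Let $C$ span the contracted extremal ray. The standard length bound gives $-K_X\cdot C\leq 2n$, while proportionality of the $S_i$ together with $\Delta\geq \sum a_i S_i$ forces $\Delta\cdot C\geq d\,(H\cdot C)$, up to a single negative contribution in the divisorial case when the contracted divisor is one of the $S_i$. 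Since $d>n$ the resulting intersection number is too large on both types of extremal ray, in analogy with the surface sketch from the introduction, producing an auxiliary lemma of the form of the paper's \eqref{l_sum}.

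\textbf{Inductive step $r\geq 2$.} The plan is to reduce $r$ by a $\mathbb{P}^1$-bundle construction. First, by a convex-geometric argument inside the two-dimensional subspace spanned by two well-chosen components $D_1,D_2\in\{S_i\}$, produce positive integers $m_1,m_2$ so that both $P:=m_1 D_1-m_2 D_2$ and $-P$ fail to be pseudo-effective; this is possible whenever $r\geq 2$ because the $S_i$ span a subspace strictly larger than a single ray of the pseudo-effective cone. Form the projective bundle
\[
\pi\colon Y=\mathbb{P}_X\bigl(\mathcal{O}_X\oplus \mathcal{O}_X(P)\bigr)\longrightarrow X,
\]
which by \eqref{l_mori} is again a $\mathbb{Q}$-factorial projective Mori dream space. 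The two tautological sections $\sigma_+,\sigma_-\subset Y$ carry the numerical classes $\pm P$, and since neither is pseudo-effective a suitable MMP on $Y$ contracts both through a birational contraction $Y\dashrightarrow Z$. Decorating $Y$ with the decomposition $\sigma_++\sigma_-+\pi^*\sum a_i S_i$ yields a log canonical, numerically log-trivial pair on $Y$ whose complexity is again $c$ (coefficient sum increases by two, dimension by one, rank of span by one). Pushing forward under $Y\dashrightarrow Z$ identifies the classes of the images of $D_1$ and $D_2$, so the rank of the corresponding span on $Z$ drops from $r+1$ to $r-1<r$, at which point the inductive hypothesis applies to $(Z,\Gamma_Z)$. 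Pulling back the resulting spanning statement along $Y\dashrightarrow Z$ and $\pi$ produces the desired spanning on $X$.

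The main obstacle will be the inductive step. Three items require care: the convex-geometric existence of $m_1,m_2$ realising the simultaneous failure of pseudo-effectivity; the claim that the MMP on $Y$ contracts exactly the two sections $\sigma_\pm$ and leaves a $Z$ that again satisfies all the hypotheses of the theorem (Mori dream space, klt, numerically trivial log canonical class, complexity less than one); and the precise rank count, ensuring the induction is strictly decreasing. Once these technical bookkeeping points are in place the induction closes, and the base case reduces to a concrete intersection-number calculation on the output of an MMP.
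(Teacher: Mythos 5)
Your proposal follows essentially the same route as the paper: induction on the rank $r$ of the span; the $\mathbb{P}^1$-bundle $Y=\mathbb{P}(\mathcal{O}_X\oplus\mathcal{O}_X(P))$ over $X$ (the paper writes $\mathbb{P}(\mathcal{O}_X(m_1S_1)\oplus\mathcal{O}_X(m_2S_2))$, the same bundle up to a twist); the contraction of the two distinguished sections to reach $Z$; and the identification of $A_{\dim Z -1}(Z)_{\mathbb{Q}}$ with $A_{n-1}(X)_{\mathbb{Q}}/\langle m_1S_1-m_2S_2\rangle$. The inductive step is, modulo bookkeeping you already flagged, the paper's argument.

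The base case has a genuine gap: you propose to run the $K_X$-MMP, justified by $-K_X\equiv\Delta$ being pseudo-effective, but a $K_X$-negative extremal ray $R$ is only guaranteed to be $\Delta$-positive, not $D$-positive where $D=\sum a_iS_i\leq\Delta$. Without $D\cdot R>0$ you cannot conclude the proportional classes $S_i$ are all positive on $R$, which is precisely what your estimate $\Delta\cdot C\geq d(H\cdot C)$ silently assumes and what is needed to see that the $S_i$ dominate the base of the Mori fibre space. (On $\mathbb{P}^1\times\mathbb{P}^1$ with $\Delta$ the sum of the four invariant lines and $D$ supported on two parallel lines, one ruling has $D\cdot R=0$ while $\Delta\cdot R>0$.) The paper instead runs the $(K_X+\Theta)$-MMP with $\Theta=\Delta-D$, for which $(K_X+\Theta)\cdot R<0$ is equivalent to $D\cdot R>0$, giving $D$-positivity for free. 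It then replaces your intersection-number comparison with the local complexity bound of \eqref{l_flipsabundance}: a divisorial contraction is impossible because $D$-positivity forces every $S_i$ to contain the image $V$ of the contracted divisor, making the local complexity along $V$ negative; and at the Mori fibre space step one restricts $\Delta$ to a general fibre rather than intersecting with a length-bounded curve, thereby sidestepping the non-Cartier-ness of the $S_i$, which would make the naive bound $S_i\cdot C\geq 1$ fail on a merely $\mathbb{Q}$-factorial $X$.
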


\begin{lemma}\label{l_sum} Let $X$ be a $\mathbb{Q}$-factorial kawamata log terminal 
projective variety of dimension $n$ and let $(X,\Delta)$ be a log canonical pair.  Let
$D=\sum a_iS_i\leq \Delta$ be a decomposition of $\Delta$.

If 
\begin{enumerate} 
\item $K_X+\Delta$ is numerically trivial,
\item $d=\sum_{i=1}^k a_i>n$, and 
\item $\llist S.k.$ all span the same ray of the cone of effective divisors
\end{enumerate} 
then the Picard number of $X$ is one.  
\end{lemma}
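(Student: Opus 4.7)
The plan is to argue by contradiction: assume $\rho(X) \geq 2$. Since $-K_X \equiv \Delta \geq D \neq 0$ is effective and non-trivial (as $d > n \geq 1$), the divisor $K_X + (1-\epsilon)\Delta \equiv -\epsilon\Delta$ is not pseudo-effective for small $\epsilon > 0$. Because $X$ is klt and $(X,\Delta)$ is log canonical, $(X, (1-\epsilon)\Delta)$ is klt, and Lemma \ref{l_mmp} lets us run the $(K_X + (1-\epsilon)\Delta)$-MMP, terminating at a Mori fibre space $\phi \colon X' \to Y'$. Each step is $(K_X+\Delta)$-trivial, so $K_{X'} + \Delta' \equiv 0$ is preserved.

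We transport the decomposition $D = \sum a_i S_i$ through the MMP via strict transforms, writing $\mathbf{S}$ for the primitive integral class on the common ray of the $S_i$. We claim that every contracted divisorial or fibre-type extremal ray $R$ is perpendicular to $\mathbf{S}$. If $\mathbf{S} \cdot R < 0$, then $S_i \cdot R < 0$ for every $i$, which forces the exceptional divisor $E$ to lie in $\Supp(S_i)$ with multiplicity $m_i \geq 1$; the coefficient of $E$ in $\Delta$ is then at least $\sum a_i m_i \geq d > n \geq 2$, violating the log canonical hypothesis. If $\mathbf{S} \cdot R > 0$, we pick a rational curve $\Sigma$ spanning $R$ with $-K_X \cdot \Sigma \leq n$ (by Mori's cone theorem applied to a general fibre for fibre-type rays, and by the adjunction estimate $-K_X \cdot \Sigma \leq -K_E \cdot \Sigma + E \cdot \Sigma \leq n - 1$ on the exceptional divisor for divisorial rays), and compute
\[
D \cdot \Sigma = \sum a_i n_i (\mathbf{S} \cdot \Sigma) \geq d > n \geq -K_X \cdot \Sigma = \Delta \cdot \Sigma,
\]
contradicting $D \leq \Delta$ on a general curve. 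Hence every such ray is perpendicular to $\mathbf{S}$, and pushforward preserves proportionality and $d > n$. Flipping steps are handled separately using that a flip is an isomorphism in codimension one, so the strict transforms $S_i^+$ continue to span a common ray on the flipped variety and no $S_i$ is lost.

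At the terminal Mori fibre space $\phi \colon X' \to Y'$ with $\dim Y' \geq 1$, we take a general fibre $F$ and a rational curve $C \subseteq F$ with $-K_F \cdot C \leq \dim F + 1 \leq n$ by Mori's cone theorem on the Fano variety $F$. Since the normal bundle along a general fibre is trivial, $-K_{X'} \cdot C \leq n$. The case $\mathbf{S}' \cdot F > 0$ yields the contradiction by the intersection argument above. In the vertical case $\mathbf{S}' \cdot F = 0$, each $S_i'$ is pulled back from $Y'$, and the canonical bundle formula descends the pair to $(Y', \Gamma')$ with $K_{Y'} + \Gamma' \equiv 0$ and the same sum of coefficients $d > n > \dim Y'$; we conclude by induction on $\dim X$. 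The residual case $\dim Y' = 0$ (so $X'$ is Fano of Picard number one) is handled via the effective cone of $X$: since $\mathbf{S}$ is pulled back from the ample generator of $N^1(X')$ through the MMP, combining $\Delta - D \geq 0$ with the exceptional divisor contributions forces $\sum a_i n_i$ to satisfy a bound incompatible with $d > n$.

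The main obstacle is the bookkeeping through the MMP, particularly establishing the length bound $-K_X \cdot \Sigma \leq n$ for divisorial extremal rays via the adjunction estimate, treating the flipping case correctly (where the flip reverses the sign of the intersection with $\mathbf{S}$ but preserves the divisors), and handling the residual $\dim Y' = 0$ case via a careful effective cone analysis on the original $X$.
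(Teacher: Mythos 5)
Your proposal follows the right broad outline (run an MMP and argue by contradiction via a length/complexity bound), but there are several substantive gaps and the approach diverges from the paper's in a way that creates unnecessary difficulties.

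First, you run the $(K_X+(1-\epsilon)\Delta)$-MMP, which contracts $\Delta$-positive rays. The paper runs the $(K_X+\Theta)$-MMP with $\Theta=\Delta-D$, so $K_X+\Theta\equiv -D$ and every contracted ray is $D$-positive. Since the $S_i$ all span the same ray, this forces \emph{every} contracted ray to be $S_i$-positive for all $i$. That choice is not cosmetic: it is what rules out divisorial contractions entirely (all the $T_i=f_*S_i$ then pass through the centre $V$, so the local complexity of $\sum a_iT_i$ at a point of $V$ is $n-d<0$, contradicting \eqref{l_flipsabundance}(1)), and what guarantees at the Mori fibre space step that every $S_i$ is horizontal. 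With a $\Delta$-positive MMP you instead have to entertain the cases $\mathbf{S}\cdot R=0$ (a divisorial contraction that drops the Picard number, so $\rho(X')=1$ would no longer give $\rho(X)=1$) and $\mathbf{S}'$ vertical at the MFS step (requiring a descent and induction that the paper's argument never needs); both are extra structure you then have to dismantle.

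Second, and more fundamentally, your intersection calculation is wrong. You assert $D\cdot\Sigma=\sum a_in_i(\mathbf{S}\cdot\Sigma)\geq d$, which implicitly requires $S_i\cdot\Sigma\geq 1$ for every $i$. The $S_i$ are integral $\mathbb{Z}$-divisors, but $X$ is only $\mathbb{Q}$-factorial, so $S_i\cdot\Sigma$ is a positive rational number that can be strictly less than $1$; nothing in the hypotheses prevents this, and it persists under the MMP steps. The paper avoids intersection theory here altogether: the contradiction at a divisorial contraction (and, via the general fibre, at the Mori fibre space) comes from \eqref{l_flipsabundance}(1), a purely \emph{local} bound on $\sum a_i$ at a point where all the $\mathbb{Q}$-Cartier divisors pass, not from a length bound on $-K_X\cdot\Sigma$ paired with $D\cdot\Sigma\geq d$. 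Your ``adjunction estimate'' for the divisorial length bound and the final ``effective cone of $X$'' paragraph for $\dim Y'=0$ are also vague enough that I would not accept them as proofs, but the $S_i\cdot\Sigma\geq 1$ gap is the one that cannot be patched within your framework; you should switch to the $(K_X+\Theta)$-MMP and replace the intersection-number contradiction by the local complexity bound of \eqref{l_flipsabundance}.
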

\begin{proof} Let $\Theta=\Delta-D$.  We run the $(K_X+\Theta)$-MMP with scaling of some
ample divisor.

Let $f\colon\rmap X.Y.$ be a step of this MMP.  $f$ is $D$-positive and as the components
of $S$ span the same ray of the cone of effective divisors, it follows that $f$ is
$S_i$-positive, for every $1\leq i\leq k$.  Let $T_i=f_*S_i$.

Suppose that $f$ is a divisorial contraction.  If $V$ is the image of the exceptional
divisor $E$ then $T_i$ contains $V$.  If $\Gamma=f_*\Delta$ then $(Y,\Gamma)$ is log
canonical and the local complexity about a point of $V$ is negative.  This is not possible
by (1) of \eqref{l_flipsabundance}.

If $f$ is a flip then $\rho(X)=\rho(Y)$ and $\llist T.k.$ all span the same ray of the
cone of effective divisors.  We replace $X$ by $Y$ in this case.  \eqref{l_mmp} implies
that after finitely many flips $f$ must be a Mori fibre space.  Let $F$ be the general
fibre and let $\Sigma$ be the restriction of $\Delta$ to $F$.  Then $(F,\Sigma)$ is log
canonical.  As $\llist S.k.$ dominate $Y$, the sum of the coefficients of $\Sigma$ is
greater than $n$.  \eqref{l_flipsabundance} implies that $F$ has dimension $n$.  But then
$Y$ is a point and $X$ has Picard number one.
\end{proof} 

\begin{proof}[Proof of \eqref{t_decomp}] We proceed by induction on the dimension $r$ of
the span of $\llist S.k.$ in $A_{n-1}(X)_{\mathbb{Q}}$.  If $r=1$ then we may apply
\eqref{l_sum}.

Otherwise, we may assume that $S_1$ and $S_2$ are linearly independent in
$A_{n-1}(X)_{\mathbb{Q}}$.  Pick integers $m_1$ and $m_2$ such that $m_1S_1$ and $m_2S_2$
are Cartier, and neither $m_1S_1-m_2S_2$ nor $m_2S_2-m_1S_1$ is pseudo-effective.

Consider the $\pr 1.$-bundle 
\[
Y=\proj{\ring X.(m_1D_1)\oplus \ring X.(m_2D_2)}..
\]
Let $f\colon\map Y.X.$ be the structure morphism.  Then $Y$ is a $\mathbb{Q}$-factorial
projective variety with kawamata log terminal singularities.  There are two distinguished
sections, which we will call $E_0$ and $E_\infty$.  Set
$\Gamma=f^* \Delta+ E_0 + E_{\infty}$.  Adjunction implies that $(Y,\Gamma)$ is a log
canonical pair, and that $K_Y+\Gamma$ is numerically trivial.  Note that
$\rho(Y)=\rho(X)+1$.  Finally, $Y$ is a Mori Dream Space because the Cox ring of $Y$ is
isomorphic as a ring to the Cox ring of $X$ with two variables adjoined, corresponding to
the sections $E_0$ and $E_{\infty}$, cf. \cite[3.2]{Brown13}.

As both $m_1S_1-m_2S_2$ and $m_2S_2-m_1S_1$ are not pseudo-effective, $E_0|_{E_0}$ and
$E_{\infty}|_{E_\infty}$ are not pseudoeffective.  Thus $D=E_0+E_{\infty}$ has Kodaira
dimension zero.  As $Y$ is a Mori dream space we may run $g\colon\rmap X.Z.$ the $D$-MMP
and the image of $D$ is semiample.  Thus the birational map $g$ contracts $E_0$ and
$E_{\infty}$.  $Z$ is a $\mathbb{Q}$-factorial projective variety with kawamata log
terminal singularities.

Note that $K_Z+\Psi$ is numerically trivial and $(Z,\Psi=g_*\Gamma)$ is log canonical.  
Note that 
\[
\rho(Z)=\rho(Y)-2=\rho(X)-1.
\]

If $T_i=f^*S_i$ then the dimension of the space spanned by $\llist T.k.$ in
$A_{n-1}(Y)_{\mathbb{Q}}$ is equal to $r$.  Let $C_i=g_*T_i$.  As $m_1C_1-m_2C_2$ is
linearly equivalent to zero in $Z$, $\llist C.k.$ span a vector space of dimension $r-1$
in $A_{n-1}(Z)_{\mathbb{Q}}$.

$Z$ is a Mori dream space, as $Y$ is a Mori dream space.  By induction $\llist C.k.$
generate $A_{n-1}(Z)_{\mathbb{Q}}$.  We can identify $A_{n-1}(Z)_{\mathbb{Q}}$ with
\[
\frac{A_{n-1}(X)_{\mathbb{Q}}}{\langle m_1S_1-m_2S_2\rangle}
\]
and so $\llist S.k.$ span $A_{n-1}(X)_{\mathbb{Q}}$.  \end{proof} 

\begin{proof}[Proof of \eqref{t_mdscase}] Since $-(K_X+\Delta)$ is nef, and $X$ is a Mori
dream space, we can find $B\sim_{\mathbb{R}} -(K_X+\Delta)$ such that $(X,\Delta+B)$ is
log canonical.  By \eqref{t_decomp}, the components of any decomposition of $(X,\Delta+B)$
with complexity less than $1$ generate $A_{n-1}(X)_{\mathbb{Q}}$.  The pair $(X,\Delta)$
has a decomposition with complexity less than $1$, and this is a decomposition of
$(X,\Delta+B)$.  Thus $r=\rho$, where $r$ is the rank of the group generated by the
$\llist S.k.$ and $\rho$ is the Picard number.

Let $Y=\Spec R$ where $R=\Cox(X)$ is the Cox ring.  Then $Y$ has dimension $n+\rho$.  Let
$T_i$ be the divisor corresponding to $S_i$ and let $\Gamma=\sum a_iT_i$.  Then $T_i$ is a
Cartier divisor and every component $\llist T.k.$ contains the point $p$ corresponding to
the unique maximal ideal which is homogeneous, \cite[2.2]{Hausen08}.  By \cite[1.1]{KO12}
the pair $(Y,\Gamma)$ is log canonical (as observed in \cite[2.5]{KO12} their result
applies to the Cox ring, as defined in \cite{Hausen08}).

\eqref{l_flipsabundance} implies that $Y$ is smooth, every component of $\Gamma$ of
coefficient greater than $1/2$ is smooth and at least $\dim Y-\rfdown 2c.$ components of
$\llist T.k.$ are smooth at $p$ and intersect transversally.

The result now follows from \eqref{t_polynomial-ring}.  
\end{proof}

\section{Log divisors of small numerical dimension}
\label{s_small}

\begin{proposition}\label{p_numerical} Assume \eqref{t_toric}$_{n-1}$, that is, 
assume \eqref{t_toric} when $X$ has dimension $n-1$.

Let $(X,\Delta)$ be a divisorially log terminal pair where $X$ is a $\mathbb{Q}$-factorial
projective variety of dimension $n$.

If $-(K_X+\Delta)$ is nef then we may find an ample divisor $A$ and a divisor
$0\leq \Delta_0\leq \Delta$ such that $K_X+A+\Delta_0$ is pseudo-effective, no component
of $N_{\sigma}(X,K_X+A+\Delta_0)$ is a component of $\Delta_0$, and the numerical
dimension of $K_X+A+\Delta_0$ is at most the complexity of $(X,\Delta)$.  
\end{proposition}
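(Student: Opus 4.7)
The plan is to consider the set $\mathcal{S}$ of pairs $(A, \Delta_0)$ with $A$ ample, $0 \leq \Delta_0 \leq \Delta$, $K_X + A + \Delta_0$ pseudo-effective, and no component of $N_\sigma(X, K_X + A + \Delta_0)$ lying in the support of $\Delta_0$. Let $M := -(K_X + \Delta)$, which is nef by hypothesis. The pair with $\Delta_0 = \Delta$ and $A$ ample enough that $A - M$ is ample lies in $\mathcal{S}$ because $K_X + A + \Delta_0 \sim_{\mathbb{R}} A - M$ is then ample, so $N_\sigma = 0$. Fix a pair in $\mathcal{S}$ minimizing $\nu := \kappa_\sigma(X, K_X + A + \Delta_0)$; since $\nu$ is an integer in $\{0, 1, \ldots, n\}$, the minimum is attained.

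Set $c := c(X, \Delta)$ and suppose for contradiction that $\nu > c$; in particular $\nu \geq 1$. Since $K_X + A + \Delta_0$ is pseudo-effective, $A + \Delta_0$ is big, and $(X, A + \Delta_0)$ is klt after replacing $A$ by a small $\mathbb{R}$-linearly equivalent perturbation, the machinery of \cite{BCHM10} produces a log terminal model $f \colon X \dashrightarrow Y$ of $(X, A + \Delta_0)$ and an ample model $\pi \colon Y \to Z$ with $\dim Z = \nu$. Fix a decomposition $\sum a_i S_i \leq \Delta$ realizing the complexity $c$ of $(X, \Delta)$. If every $S_i$ dominated $Z$, restriction to a general fiber $F$ of $\pi$ would give a decomposition of the log canonical pair $(F, \Delta|_F)$ of complexity $(n - \nu) + r - d = c - \nu < 0$; as $-(K_F + \Delta|_F) = -(K_X + \Delta)|_F$ is nef and $\dim F \leq n - 1$, this contradicts the non-negativity of complexity supplied by the inductive hypothesis \eqref{t_toric}$_{n-1}$ (via \eqref{c_complexity} in lower dimension). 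Hence there exists a prime divisor $D$ in the support of $\Delta$ that does not dominate $Z$.

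By \eqref{l_orientation}, the divisor $K_X + A + \Delta_0 - \epsilon D$ is pseudo-effective for all sufficiently small $\epsilon > 0$. The expected technical companion lemma (on decreasing the numerical dimension) asserts that if $D$ is non-dominating but not contained in the stable base locus of $K_X + A + \Delta_0$, then this subtraction strictly decreases $\nu$; the latter condition is exactly what our $N_\sigma$-avoidance on $(A, \Delta_0)$ arranges, combined with the fact that $D$ is a component of $\Delta$. To convert back into a pair in $\mathcal{S}$ with the inequality $0 \leq \Delta_0 \leq \Delta$ preserved, one follows the recipe sketched in the introduction: take a convex combination of $(A, \Delta_0)$ with $(M, \Delta)$, valid because $K_X + M + \Delta \equiv 0$, and cancel any component shared between $\Delta_0$ and the newly formed $N_\sigma$ using a minimal-cancellation step. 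The resulting pair is in $\mathcal{S}$ with strictly smaller $\nu$, contradicting minimality.

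The main obstacle is precisely the descent step in the last paragraph: one has to verify that subtracting (or absorbing) a non-dominating prime divisor, followed by the convex combination with $(M, \Delta)$ and the cancellation against $N_\sigma$, simultaneously preserves ampleness of $A$, the sandwich $0 \leq \Delta_0 \leq \Delta$, and the $N_\sigma$-avoidance condition, while actually dropping $\nu$. The careful orchestration of these competing constraints is what will be packaged into the companion technical lemmas foreshadowed in the introduction; everything else reduces to a clean application of \eqref{t_toric}$_{n-1}$ on the fiber of the ample model.
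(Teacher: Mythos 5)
Your overall architecture does mirror the paper's: minimize the numerical dimension over pairs $(A,\Delta_0)$, use a restriction-to-the-fiber argument together with $\eqref{t_toric}_{n-1}$ to produce a vertical divisor, and use that divisor to drop the numerical dimension, contradicting minimality. But there is a genuine gap precisely in the step you describe as ``a clean application of $\eqref{t_toric}_{n-1}$ on the fiber of the ample model.'' On the log terminal model $Y$ of $(X,A+\Delta_0)$ with ample model morphism $g\colon Y\to Z$, the numerically trivial log canonical pair on a general fiber $F$ has boundary $\Theta=\bigl(f_*(A+\Delta_0)\bigr)|_F$, which is controlled by $\Delta_0$, not by $\Delta$. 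When you restrict a decomposition $\sum a_iS_i\leq\Delta$, nothing forces $\sum a_iC_i\leq\Theta$; since $\Delta_0$ can be much smaller than $\Delta$, the restriction can overshoot $\Theta$, and the inductive nonnegativity of complexity ($\eqref{c_complexity}_{n-1}$) then simply does not apply to the data you have. Your identity $-(K_F+\Delta|_F)=-(K_X+\Delta)|_F$ is also not available on $Y$: $f$ is a log terminal model for $K_X+A+\Delta_0$, not for $K_X+\Delta$, so $f_*(K_X+\Delta)$ need not be anti-nef.

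The paper fixes this by inserting $\eqref{l_minimal}$ \emph{before} the fiber argument, not as part of the final descent: convex-combining $(A,\Delta_0)$ with $(M,\Delta)$, where $M=-(K_X+\Delta)$ and $K_X+M+\Delta\equiv 0$, and cancelling the part of $N_\sigma$ supported on $\Delta$, one replaces $(A,\Delta_0)$ by $(A_t,\Delta_1)$ with $\Delta_1\geq(1-\delta)\Delta$, unchanged numerical dimension, and $N_\sigma(X,K_X+A_t+\Delta_1)$ having no component in common with \emph{all} of $\Delta$. Only then does a (rescaled) decomposition of $\Delta$ give a legitimate decomposition of $\Theta$ with complexity $<k$, and only then is the stronger $\Delta$-avoidance available, which is needed to rule out a horizontal component of $\Delta\setminus\Delta_0$ that is contracted by $f$ and thus spoils the dichotomy ``vertical or not'' that $\eqref{l_decrease_dim}$ requires. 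You relegate this to a ``minimal-cancellation'' companion lemma at the very end, but it must run first; and because of that, building the $\Delta_0$-avoidance into the set $\mathcal{S}$ that you minimize over is less useful than the paper's route of minimizing without constraint and then applying $\eqref{l_minimal}$. Your $\eqref{l_orientation}$-plus-descent step is correctly identified as the crux and corresponds to the paper's $\eqref{l_decrease_dim}$, whose actual mechanism (choosing a minimal $\lambda$ and comparing ample models via \cite[3.3.2]{HM10}) is substantially more delicate than ``subtract $\epsilon D$.''
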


\begin{corollary}\label{c_numerical} Assume \eqref{t_toric}$_{n-1}$. 

Let $X$ be a $\mathbb{Q}$-factorial projective variety of dimension $n$.  Suppose
$(X,\Delta)$ is a divisorially log terminal pair such that $-(K_X+\Delta)$ is nef.
Let $\gamma_0\in (0,2)$.  

If the absolute complexity $\gamma(X,\Delta)<\gamma_0$ then there is a log canonical pair
$(Y,\Gamma)$ such that $-(K_Y+\Gamma)$ is ample, $\gamma(Y,\Gamma)<\gamma_0$ and $Y$ is a
$\mathbb{Q}$-factorial projective variety birational to $X$.
\end{corollary}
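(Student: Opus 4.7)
The plan is to combine Proposition \ref{p_numerical} with the minimal model program and Lemma \ref{l_mori} to produce the desired pair.

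Applying Proposition \ref{p_numerical}, I obtain an ample $A$ and $0 \leq \Delta_0 \leq \Delta$ such that $K_X + A + \Delta_0$ is pseudo-effective, no component of $N_\sigma(K_X + A + \Delta_0)$ lies in $\Delta_0$, and the numerical dimension of $K_X + A + \Delta_0$ is at most $c(X, \Delta) \leq \gamma(X, \Delta) < \gamma_0 < 2$. Choosing $A$ general, $(X, A + \Delta_0)$ is klt. Since $A + \Delta_0$ is big, by \cite{BCHM10} the $(K_X + A + \Delta_0)$-MMP with scaling terminates at a log terminal model $\phi \colon \rmap X.Y.$, giving a $\mathbb{Q}$-factorial projective $Y$ birational to $X$ and a klt pair $(Y, \Gamma_1)$ with $\Gamma_1 = \phi_*(A + \Delta_0)$ big and $K_Y + \Gamma_1$ nef of numerical dimension $\leq c < 2$. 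The condition on $N_\sigma$ guarantees that $\phi$ contracts no component of $\Delta_0$, so $d(\phi_*\Delta_0) = d(\Delta_0)$.

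Since $(Y, \Gamma_1)$ is klt with $\Gamma_1$ big, by \cite{BCHM10} $K_Y + \Gamma_1$ is semi-ample; let $\psi \colon \map Y.Z.$ be the associated morphism, with $\dim Z \in \{0, 1\}$. I produce a klt log Calabi--Yau pair $(Y, \Gamma_1')$ with $K_Y + \Gamma_1' \equiv 0$ and $\Gamma_1'$ big as follows. If $\dim Z = 0$, I take $\Gamma_1' = \Gamma_1$. If $\dim Z = 1$, I write $K_Y + \Gamma_1 \sim_{\mathbb{Q}} \psi^* H_Z$ for some ample $H_Z$ on $Z$; since $\phi_* A$ is big the divisor $\phi_* A - \psi^* H_Z$ is big, and Kodaira's lemma produces an effective $A' \sim_{\mathbb{R}} \phi_* A - \psi^* H_Z$ with $(Y, A' + \phi_* \Delta_0)$ klt, so setting $\Gamma_1' = A' + \phi_*\Delta_0$ works. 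In either case, Lemma \ref{l_mori}, specifically the implication $(3) \Rightarrow (1)$, then produces a klt (hence log canonical) pair $(Y, \Gamma)$ with $-(K_Y + \Gamma)$ ample, where $\Gamma$ is an arbitrarily small perturbation of $\Gamma_1'$.

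The remaining task is to verify $\gamma(Y, \Gamma) < \gamma_0$. Since $\phi$ is a birational contraction, $\rho(Y) \leq \rho(X)$. By choosing the Kodaira representative $A'$ supported away from $\phi_*\Delta_0$ and the perturbation in Lemma \ref{l_mori} with sufficiently small coefficients, I can arrange $d(\Gamma)$ to remain close to $d(\Gamma_1) \geq d(\Delta_0) + d(\phi_*A)$, which together with $\gamma(X, \Delta) < \gamma_0$ yields the desired bound. I expect the main difficulty to lie in the case $\dim Z = 1$, where one must control the coefficient sum of $\Gamma_1'$ through the Kodaira decomposition while simultaneously maintaining log canonical singularities; handling this carefully is what makes the two-paragraph reduction genuinely require the full strength of the Proposition (namely the constraint on $N_\sigma$) and not just pseudo-effectivity.
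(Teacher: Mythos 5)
Your proposal diverges from the paper's argument in a crucial and ultimately fatal way: you try to \emph{handle} the case of numerical dimension one, whereas the paper \emph{rules it out}.  After running the MMP and obtaining the fibration $g\colon\map Y.Z.$ with $\dim Z\leq 1$, the paper shows that $\dim Z=1$ is impossible.  Restricting to a general fibre $F$ of $g$ and setting $\Theta=\Gamma|_F$, one has $(F,\Theta)$ log canonical with $K_F+\Theta$ numerically trivial, and since the restriction map on divisors has $F$ in its kernel and every component of $\Gamma$ dominates $Z$ (by \eqref{l_decrease_dim}), one computes
\[
c(F,\Theta)\leq (\dim Y-1)+(\rho(Y)-1)-d=\gamma(Y,\Gamma)-2<\gamma_0-2<0,
\]
contradicting the non-negativity of the complexity in dimension $n-1$, which is exactly where the hypothesis \eqref{t_toric}$_{n-1}$ is used.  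Your proposal never invokes this contradiction; instead it proposes to modify the boundary so that $K_Y+\Gamma_1'\equiv 0$.

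That modification does not work.  First, there is no reason $\phi_*A-\psi^*H_Z$ is big: this divisor equals $-(K_Y+\phi_*\Delta_0)$, which is not controlled by the hypotheses.  Second, and more fundamentally, even if it were big, Kodaira's lemma gives an effective representative $A'\sim_{\mathbb{R}}\phi_*A-\psi^*H_Z$ with \emph{no lower bound whatsoever} on its coefficient sum $d(A')$; coefficient sums are not invariants of $\mathbb{R}$-linear equivalence.  Consequently the claimed bound $\gamma(Y,\Gamma)<\gamma_0$ cannot be extracted from this construction.  This is precisely why the paper must reduce all the way to numerical dimension zero rather than stopping at one.

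There is also a second gap that affects even the numerical-dimension-zero case.  Proposition \eqref{p_numerical} produces \emph{some} $A$ and $\Delta_0$ with the stated properties, but gives no lower bound on $d(\Delta_0)$ or $d(A)$; in fact $\Delta_0=0$ is allowed.  Your inequality $d(\Gamma_1)\geq d(\Delta_0)+d(\phi_*A)$ is therefore vacuous: the right-hand side can be arbitrarily small, and the conclusion $\gamma(Y,\Gamma)<\gamma_0$ does not follow from $\gamma(X,\Delta)<\gamma_0$.  The paper closes this gap by applying \eqref{l_minimal} to replace $(\Delta_0,A)$ by $(\Delta_1,A_t)$ with $\Delta_1\geq(1-\delta)\Delta$, which forces $d(\Delta_1)$ close to $d(\Delta)$ and hence $\gamma(X,A_t+\Delta_1)<\gamma_0$.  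You would need to invoke \eqref{l_minimal} (or reprove its content) before running the MMP; as written, the complexity estimate in your final paragraph is unsupported.
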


\begin{lemma}\label{l_fibration} Assume \eqref{t_toric}$_{n-1}$.  

Let $(X,\Delta)$ be a divisorially log terminal pair where $X$ is a $\mathbb{Q}$-factorial
projective variety of dimension $n$.  Let $A$ be an ample divisor such that $K_X+A+\Delta$
is pseudo-effective and let $\phi\colon\rmap X.Z.$ be the ample model of $K_X+A+\Delta$.
Assume that no component of $N=N_{\sigma}(X,K_X+A+\Delta)$ is a component of $\Delta$.

If the dimension of $Z$ is greater than the complexity of $(X,\Delta)$ then we may find a
component $P$ of $\Delta$ which is vertical.
\end{lemma}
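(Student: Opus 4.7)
The plan is to argue by contradiction. Assume every component of $\Delta$ is horizontal over $Z$ and derive a contradiction by restricting the decomposition to a general fiber of $\phi$ and applying \eqref{c_complexity} inductively.

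First I would pass to a birational model on which the ample model is realised by a morphism and the negative part of $K_X+A+\Delta$ has been contracted. Choose $A'\sim_{\mathbb{R}}A$ general effective so that $(X,\Delta+A')$ is still divisorially log terminal. Because $K_X+\Delta+A'$ is pseudo-effective with big boundary, after a slight klt perturbation \cite[1.3.3]{BCHM10} allows us to run a $(K_X+\Delta+A')$-MMP $f\colon X\dashrightarrow X'$ that terminates in a log minimal model, on which $K_{X'}+f_*(\Delta+A')$ is semiample with ample model $g\colon X'\to Z$. The essential observation is that the divisors contracted by $f$ lie in the support of $N_\sigma(X,K_X+A+\Delta)=N$; by hypothesis $N$ shares no component with $\Delta$, so $f$ contracts no component of $\Delta$. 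Consequently a decomposition $\sum a_iS_i\le\Delta$ realising the complexity $c=c(X,\Delta)$ pushes forward to $\sum a_if_*S_i\le f_*\Delta$ on $X'$ with identical coefficients, and the span of $\{f_*S_i\}$ in Weil divisors modulo algebraic equivalence still has rank at most $r$.

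Next I would take a general fiber $F$ of $g$. Choosing $A'$ sufficiently general we may assume $(X',f_*(A'+\Delta))$ is dlt in a neighbourhood of $F$, so adjunction gives that $(F,(f_*\Delta)|_F+(f_*A')|_F)$ is log canonical. Since $F$ is a general fiber of the ample model,
\[
K_F+(f_*\Delta)|_F+(f_*A')|_F\equiv (g^*H)|_F\equiv 0,
\]
and hence $-(K_F+(f_*\Delta)|_F+(f_*A')|_F)$ is nef. Since every $S_i$ is horizontal, each $(f_*S_i)|_F$ is a nonzero effective $\mathbb{Z}$-divisor on $F$, and $\sum a_i(f_*S_i)|_F\le(f_*\Delta)|_F+(f_*A')|_F$ is a genuine decomposition on $F$ whose sum of coefficients is still $d$. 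Because restriction to $F$ sends algebraic equivalences to algebraic equivalences, the span of $\{(f_*S_i)|_F\}$ in Weil divisors of $F$ modulo algebraic equivalence has rank $r'\le r$.

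Finally I would invoke \eqref{c_complexity} on the proper variety $F$, whose dimension is $n-\dim Z<n$, to conclude
\[
0\le c(F,(f_*\Delta)|_F+(f_*A')|_F)\le \dim F+r'-d\le (n-\dim Z)+r-d=c-\dim Z,
\]
which contradicts $\dim Z>c=c(X,\Delta)$. The principal technical obstacle is the MMP reduction: one must guarantee that $f$ contracts only components of $N$, and this is precisely where the hypothesis that $N$ and $\Delta$ have disjoint support is used. Once that reduction is in place the rest is a clean dimension count through adjunction and the inductive application of \eqref{c_complexity}.
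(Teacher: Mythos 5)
Your proposal follows essentially the same route as the paper's proof: pass to a log terminal model of $K_X+A+\Delta$, observe that the hypothesis on $N_\sigma$ guarantees no component of $\Delta$ is contracted, restrict the decomposition to a general fiber of the induced fibration over $Z$, and invoke the inductive nonnegativity of complexity to bound the dimension of $Z$. The only differences are cosmetic: you frame the conclusion as a contradiction (assuming all components of $\Delta$ are horizontal), whereas the paper computes directly that the sum $h$ of coefficients over the indices with a horizontal component satisfies $h<d$ and concludes some $S_i$ is entirely vertical; and you introduce an explicit effective $A'\sim_{\mathbb{R}}A$ where the paper writes $B=f_*A$. Both are sound and amount to the same argument.
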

\begin{proof} Let $f\colon\rmap X.Y.$ be a log terminal model of $K_X+A+\Delta$.  Then
there is a contraction morphism $g\colon\map Y.Z.$.  The divisors contracted by $f$ are
the components of $N$ and so $f$ does not contract any components of $\Delta$.  If
$B=f_*A$ and $\Gamma=f_*\Delta$ then $(Y,B+\Gamma)$ is divisorially log terminal.

If $F$ is the general fibre of $g$ and $\Theta$ is the restriction of $B+\Gamma$ to $F$
then $(F,\Theta)$ is log canonical and $K_F+\Theta$ is numerically trivial.  Let
$\sum a_iS_i$ be a decomposition of $(X,\Delta)$ which computes the complexity.  Let $C_i$
be the restriction to $F$ of the image of $S_i$.  Then $\sum a_iC_i$ is a decomposition of
$(F,\Theta)$, where the sum ranges over the indices $i$ such that at least one component
of $S_i$ is horizontal.  The rank of the span of $\llist C.k.$ is at most the rank of the
span of $\llist S.k.$, the sum $h$ of the coefficients of $\llist C.k.$ is at least the
sum of the coefficients of the horizontal components of $\llist S.k.$ and the dimension of
$F$ is equal to the dimension of $X$ minus the dimension of $Z$.

As we are assuming \eqref{t_toric}$_{n-1}$, which implies \eqref{c_complexity}$_{n-1}$,
the complexity of the pair $(F,\Theta)$ is non-negative.  Thus $h<d$ so that there is an
index $i$ such that every component of $S_i$ is vertical.  In particular at least one
component $P$ of $\Delta$ is vertical.
\end{proof}

\begin{lemma}\label{l_decrease_dim} Let $(X,\Delta)$ be a divisorially log terminal 
pair, where $X$ is a $\mathbb{Q}$-factorial projective variety and $-(K_X+\Delta)$ is nef.
Let $A_0$ be an ample divisor and let $0\leq \Delta_0\leq \Delta$ be a divisor such that
$K_X+A_0+\Delta_0$ has numerical dimension $k$.  Suppose $\Delta$ has a component $P$
which is vertical for the ample model $\phi\colon\rmap X.Z_0.$ of $K_X+A_0+\Delta_0$.

Then there is an ample divisor $A_1$ and a divisor $0\leq \Delta_1 \leq \Delta$ such that
$K_X+A_1+\Delta_1$ is pseudo-effective and the numerical dimension of $K_X+A_1+\Delta_1$
is less than $k$.
\end{lemma}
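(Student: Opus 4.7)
The plan is to perturb the pair $(A_0, \Delta_0)$ using two ingredients simultaneously: the nef class $M := -(K_X+\Delta)$, which satisfies $K_X+M+\Delta \equiv 0$ and so generates a ``null direction'' along which we can move the data, and the vertical prime divisor $P$, which by \eqref{l_orientation} allows us to subtract a positive multiple of $P$ while keeping pseudo-effectivity. Since $A_0+\Delta_0$ is big, we may choose a boundary $\Delta_0'$ numerically equivalent to $A_0+\Delta_0$ with $(X,\Delta_0')$ kawamata log terminal, so that \eqref{l_orientation} applies to $K_X+\Delta_0'$ whose ample model is $\phi$.

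First, I would invoke \eqref{l_orientation} with $D=P$: since $P$ is vertical for $\phi$, there is a maximal $d^{*}>0$ with the property that $K_X+A_0+\Delta_0-dP$ is pseudo-effective for every $d\in[0,d^{*}]$. Then for parameters $t\in(0,1)$ and $d\in(0,d^{*}]$, set
\[
A_1 := (1-t)A_0+tM, \qquad \Delta_1 := (1-t)(\Delta_0-dP)+t\Delta.
\]
Since $A_0$ is ample and $M$ is nef, $A_1$ is ample. For $d$ small relative to $t\cdot\mult_P\Delta$, the coefficient of $P$ in $\Delta_1$ is nonnegative, and using $\Delta_0\leq\Delta$ one checks $\Delta_1\leq\Delta$. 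A direct computation using $K_X+M+\Delta\equiv 0$ gives
\[
K_X+A_1+\Delta_1 \equiv (1-t)\bigl(K_X+A_0+\Delta_0-dP\bigr),
\]
so the numerical dimension of $K_X+A_1+\Delta_1$ equals the numerical dimension of $K_X+A_0+\Delta_0-dP$.

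It remains to show, for $d=d^{*}$, that $\kappa_\sigma(K_X+A_0+\Delta_0-d^{*}P)<k$. This is the main obstacle. The idea is to argue by contradiction: if the numerical dimension stayed equal to $k$, then the new ample model $\phi'\colon\rmap X.Z_0'.$ would still have $\dim Z_0'=k$, and one would show, using Nakayama's Zariski decomposition, that $P$ is not a component of $N_\sigma(K_X+A_0+\Delta_0-d^{*}P)$ (otherwise one could subtract more $P$ and remain pseudo-effective, violating maximality of $d^{*}$). Applying \eqref{l_orientation} once more to a klt boundary numerically equivalent to $A_0+\Delta_0-d^{*}P+\eta H$ for small ample $H$, and using that $P$ remains vertical for the ample model of this perturbation (by continuity of the ample model under small perturbations), one would conclude that $K_X+A_0+\Delta_0-(d^{*}+\epsilon)P$ is pseudo-effective for some $\epsilon>0$, contradicting the maximality of $d^{*}$.

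Thus at $d=d^{*}$ the numerical dimension strictly drops, and the pair $(A_1,\Delta_1)$ constructed above satisfies the conclusion of the lemma. The subtle point throughout is that merely being on the boundary of the pseudo-effective cone only forces $\kappa_\sigma<n$, so the verticality hypothesis on $P$ together with the iff direction of \eqref{l_orientation} is essential to refine this bound to $\kappa_\sigma<k$.
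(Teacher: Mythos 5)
Your overall strategy coincides with the paper's: write $K_X+A_1+\Delta_1$ as a positive multiple of $K_X+A_0+\Delta_0-dP$ by taking a convex combination with the null direction $M=-(K_X+\Delta)$, push $d$ to its extremal value $d^*$ on the pseudo-effective boundary, and then use \eqref{l_orientation} to show the numerical dimension must drop. Your construction of $(A_1,\Delta_1)$ (with the free parameter $t$ close to $1$ to keep $\Delta_1\geq 0$) is fine and is equivalent, after rescaling, to the paper's choice of a minimal $\lambda$.

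The gap is in the crucial final step. You want to show that if $\kappa_\sigma(K_X+A_0+\Delta_0-d^*P)=k$ then $P$ is vertical over the corresponding ample model, so that \eqref{l_orientation} would let you subtract more $P$, a contradiction. But your justification — apply \eqref{l_orientation} to a klt boundary equivalent to $A_0+\Delta_0-d^*P+\eta H$ and invoke ``continuity of the ample model'' — does not work. Once you add $\eta H$ the divisor $K_X+A_0+\Delta_0-d^*P+\eta H$ becomes big, so its ample model is birational to $X$ and \emph{every} divisor, in particular $P$, is horizontal; the perturbation destroys exactly the property you are trying to detect. Moreover ``continuity of the ample model'' is not a fact you can cite: ample models are piecewise constant along line segments, not continuous, and the relevant tool here is the finiteness/constancy result \cite[3.3.2]{HM10}. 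The paper proceeds by applying \eqref{l_orientation} at every \emph{interior} point $t<1$ of the segment (where $K_X+A_t+\Delta_t-\epsilon P$ is still pseudo-effective for small $\epsilon>0$) to conclude $P$ is vertical over $Z_t$, then uses \cite[3.3.2]{HM10} to get a fixed $Y=Z_t$ for $t$ near $1$ together with a contraction morphism $Y\to Z_1$; since \eqref{l_orientation} at the endpoint $t=1$ shows $P$ is horizontal over $Z_1$ while it is vertical over $Y$, the morphism cannot be birational, so $\dim Z_1<\dim Y\leq k$. You should also record somewhere that $\kappa_\sigma$ can only decrease along the segment (since you are subtracting an effective divisor), which is what gives $\dim Y\leq k$. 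Finally, to apply \eqref{l_orientation} you need a big klt boundary; the natural one is $A_1+\Delta_1$ (or $A_t+\Delta_t$), which is big because $A_1$ is ample — there is no need for the $\eta H$ perturbation at all.
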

\begin{proof} Set $M=-(K_X+\Delta)$.  Let $p$ be the coefficient of $P$ in $\Delta$.  Pick
$\lambda$ minimal so that
\begin{align*} 
K_X+A_1+\Delta_1 &=K_X+\lambda A_0+(1-\lambda)M+\lambda\Delta_0+(1-\lambda)(\Delta-pP)\\ 
                &=\lambda(K_X+A_0+\Delta_0)+(1-\lambda)(K_X+M+\Delta-pP)\\
                &=\lambda(K_X+A_0+\Delta_0)-(1-\lambda)pP
\end{align*} 
is pseudo-effective, where 
\[
A_1=\lambda A_0+(1-\lambda)M \qquad \text{and} \qquad \Delta_1=\lambda\Delta_0+(1-\lambda)(\Delta-pP).
\]
Note that $\lambda>0$.  In particular $A_1$ is ample and $0\leq \Delta_1\leq \Delta$.

Let $A_t=(1-t)A_0+tA_1$ and $\Delta_t=(1-t)\Delta_0+t\Delta_1$.  Let $Z_t$ be the ample
model of $K_X+A_t+\Delta_t$.  Note that $K_X+A_t+\Delta_t$ is a convex linear combination
of $K_X+A_0+\Delta_0$ and $-P$.  Therefore if $P$ is in the stable base locus of
$K_X+A_t+\Delta_t$ then $Z_t=Z_0$ and so $P$ is vertical over $Z_t$.  If $P$ is not in the
stable base locus and $t<1$ then \eqref{l_orientation} implies that $P$ is vertical over
$Z_t$.

By \cite[3.3.2]{HM10} we may find $\delta>0$ such that $Y=Z_t$ is independent of
$t\in (1-\delta,1)$ and there is a contraction morphism $f\colon\map Y.Z_1.$.

By what we just proved $P$ is vertical for $Y$.  On the other hand \eqref{l_orientation}
implies that $P$ is horizontal for $Z_1$.  Thus $f$ is not birational and so the dimension
of $Z_1$ is less than the dimension of $Y$.  In particular the numerical dimension of
$K_X+A_1+\Delta_1$ is less than $k$.
\end{proof}

\begin{lemma}\label{l_minimal} Let $(X,\Delta)$ be a divisorially log terminal pair
where $X$ is a $\mathbb{Q}$-factorial projective variety and $M=-(K_X+\Delta)$ is nef.
Let $\delta>0$ be any positive real number.  

Let $A$ be an $\mathbb{R}$-divisor and let $0\leq \Delta_0\leq \Delta$ (respectively
$\Delta_0\geq \alpha\Delta$ some $\alpha$).  Suppose that $K_X+A+\Delta_0$ is
pseudo-effective.  Let
\[
K_X+A+\Delta_0=P+N=P_{\sigma}(X,K_X+A+\Delta_0)+N_{\sigma}(X,K_X+A+\Delta_0)
\]
be Nakayama's Zariski decomposition.  We may write $N=N_0+N_1$, where every component of
$N_0$ is a component of $\Delta$ and no component of $N_1$ is a component of $\Delta$.

Then we may find $t>0$ and $\Delta_1\leq \Delta\leq (1+\delta)\Delta_1$ (respectively
$\Delta_1\geq (1-\delta)\Delta$) such that
\[
P_{\sigma}(X,K_X+A_t+\Delta_1)=tP \qquad \text{and} \qquad N_{\sigma}(X,K_X+A_t+\Delta_1)=tN_1,
\]
where
\[
A_t=tA+(1-t)M.
\]
\end{lemma}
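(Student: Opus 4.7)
The plan is to exhibit $\Delta_1$ explicitly as the convex combination
\[
\Delta_1 := t(\Delta_0 - N_0) + (1-t)\Delta
\]
for a sufficiently small $t > 0$, and verify the required properties in turn. Using $M = -(K_X+\Delta)$, the identity $K_X + M + \Delta = 0$ makes the $(1-t)$-terms in $K_X + A_t + \Delta_1$ telescope, so
\begin{align*}
K_X + A_t + \Delta_1
&= K_X + tA + (1-t)M + t(\Delta_0 - N_0) + (1-t)\Delta \\
&= t(K_X + A + \Delta_0) - tN_0 \\
&= t(P + N_0 + N_1) - tN_0 = tP + tN_1.
\end{align*}

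Next I would confirm that $tP + tN_1$ is Nakayama's Zariski decomposition of $K_X + A_t + \Delta_1$. Since $\sigma_C$ is positively homogeneous, this reduces to proving $N_\sigma(P + N_1) = N_1$. One direction follows from the minimality built into $N_\sigma$: as $(P + N_1) - N_1 = P$ has $\sigma_C(P) = 0$ for every prime divisor $C$, we get $N_\sigma(P + N_1) \le N_1$. For the reverse inequality, applying subadditivity of $\sigma_C$ to the decomposition $K_X + A + \Delta_0 = (P + N_1) + N_0$, together with the bound $\sigma_C(N_0) \le \mult_C(N_0)$, yields
\[
\sigma_C(P + N_1) \ge \mult_C(N) - \mult_C(N_0) = \mult_C(N_1)
\]
for every prime divisor $C$, completing the verification.

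Finally I would choose $t > 0$ small enough to realise the constraints on $\Delta_1$. In the first case $\Delta_0 \le \Delta$, the bound $\Delta_1 \le \Delta$ is automatic from $\Delta - \Delta_1 = t(\Delta - \Delta_0 + N_0) \ge 0$, while the bound $\Delta \le (1+\delta)\Delta_1$ amounts, componentwise, to
\[
t(1+\delta)\,\mult_C(\Delta - \Delta_0 + N_0) \le \delta\,\mult_C(\Delta).
\]
Since every component of $N_0$ is, by hypothesis, a component of $\Delta$, every prime $C$ with $\mult_C(\Delta - \Delta_0 + N_0) > 0$ satisfies $\mult_C(\Delta) > 0$, so the ratio is bounded uniformly in $C$ and the inequality holds for any sufficiently small $t > 0$. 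The second case $\Delta_0 \ge \alpha\Delta$ is entirely analogous: the bound $\Delta_1 \ge (1-\delta)\Delta$ reduces to $(\delta - t(1-\alpha))\,\mult_C(\Delta) \ge t\,\mult_C(N_0)$ componentwise, which holds for $t$ small since components of $N_0$ are again components of $\Delta$. The most subtle step is the second one; it hinges crucially on the characterisation of the Nakayama positive part as satisfying $\sigma_C = 0$ on every prime divisor, combined with subadditivity of $\sigma_C$.
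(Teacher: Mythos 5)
Your proposal is correct and takes essentially the same approach as the paper: the same choice $\Delta_1 = t(\Delta_0 - N_0) + (1-t)\Delta$ and the same telescoping computation using $K_X + M + \Delta \sim_{\mathbb{R}} 0$ to get $K_X + A_t + \Delta_1 \sim_{\mathbb{R}} tP + tN_1$. The paper leaves implicit the verification that $tP + tN_1$ is actually the Nakayama decomposition; your two-sided subadditivity argument (using $\sigma_C(P)=0$ on one side and $\sigma_C(N_0) \le \mult_C N_0$ on the other) supplies that step cleanly.
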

\begin{proof} We have 
\begin{align*} 
K_X+A+\Delta_0 &\sim_{\mathbb{R}} P+N \\
K_X+M+\Delta   &\sim_{\mathbb{R}} 0.
\end{align*} 
Given $t\in (0,1]$, 
\begin{align*} 
K_X+A_t+t\Delta_0+(1-t)\Delta &=t(K_X+A+\Delta_0)+(1-t)(K_X+M+\Delta) \\
                              &\sim_{\mathbb{R}}tP+tN. 
\end{align*} 
It $t>0$ is sufficiently small then
\[
\Delta_1=t\Delta_0+(1-t)\Delta-tN_0\geq 0 \qquad \text{and} \qquad \Delta\leq (1+\delta)\Delta_1.\qedhere
\]
\end{proof}

\begin{proof}[Proof of \eqref{p_numerical}] Consider divisors of the form
$K_X+A+\Delta_0$, where $0\leq \Delta_0\leq \Delta$ and $A$ is ample.  Let $k$ be the
minimum of the numerical dimension of pseudo-effective divisors of this form.

Then $k<\infty$, since we can always pick $A$ so that $K_X+A$ is ample.  Pick
$0\leq \Delta_0\leq \Delta$ and an ample divisor $A$ such that $K_X+A+\Delta_0$ has
numerical dimension $k$.  

Suppose that $k$ is bigger than the complexity of $(X,\Delta)$.  By \eqref{l_minimal} we
may assume that no component of $\Delta$ is a component of $N$ and that the complexity of
$(X,A+\Delta_0)$ is less than $k$.  By \eqref{l_fibration} we may find a component $P$ of
$\Delta$ which is vertical for the ample model of $(X,A+\Delta_0)$.  Then by
\eqref{l_decrease_dim} we can find an ample divisor $A_1$ and a divisor
$0\leq \Delta_1 \leq \Delta$ such that $K_X+A_1+\Delta_1$ is pseudo-effective and the
numerical dimension is less than $k$, a contradiction. \end{proof}

\begin{proof}[Proof of \eqref{c_numerical}] Consider divisors of the form
$K_X+A+\Delta_0$, where $0\leq \Delta_0\leq \Delta$, $A$ is ample and no component of
$N_{\sigma}(X,K_X+A+\Delta_0)$ is a component of $\Delta_0$.  Let $k$ be the minimal
numerical dimension of pseudo-effective divisors of this form.  As
\[
c(X,\Delta)\leq \gamma(X,\Delta)<2,
\]
by \eqref{p_numerical} we have $k\leq 1$.

Pick $0\leq \Delta_0\leq \Delta$ and an ample divisor $A$ such that $K_X+A+\Delta_0$ has
numerical dimension $k$.  By \eqref{l_minimal} we may assume that that the absolute
complexity of $(X,A+\Delta_0)$ is less than $\gamma_0$.  Pick $\delta>0$ such that
$A+\delta\Delta_0$ is ample.  Replacing $A$ by $A+\delta\Delta_0$ and $\Delta_0$ by
$(1-\delta)\Delta_0$ we may assume that $(X,A+\Delta_0)$ is kawamata log terminal.

Let $\phi\colon\rmap X.Z.$ be the ample model of $K_X+A+\Delta_0$ and let
$f\colon\rmap X.Y.$ be a log terminal model of $K_X+A+\Delta_0$.  Then there is a
contraction morphism $g\colon\map Y.Z.$.  If $\Gamma=f_*(\Delta_0+A)$ then
$\gamma(Y,\Gamma) <\gamma_0$.

Suppose that $k=1$, that is, suppose $Z$ is a curve.  If $F$ is a general fibre of $g$ and
$\Theta=\Gamma|_F$ then $(F,\Theta)$ is log canonical and $K_F+\Theta$ is numerically
trivial.  The natural map which assigns to a divisor on $Y$ its restriction to $F$ has a
non-trivial kernel, since $F$ restricts to zero.  Therefore the dimension $r$ of the span
of the components of $\Theta$ is at most $\rho(Y)-1$.  \eqref{l_decrease_dim} implies that
every component of $\Gamma$ dominates $Z$.  Therefore the sum $t$ of the coefficients of
$\Theta$ is at least the sum $d$ of the coefficients of $\Delta$.  Hence
\begin{align*} 
c(F,\Theta)&\leq \dim F+r-t\\
           &\leq (\dim Y-1)+(\rho(Y)-1)-d\\
           &=\gamma(Y,\Gamma) -2\\
           &<0,
\end{align*} 
a contradiction.  Hence $k=0$ and we may apply \eqref{l_mori}. 
\end{proof}
\section{Reduction to Mori dream spaces}
\label{s_reduction}

\begin{lemma}\label{l_up}  Let $X$ be a $\mathbb{Q}$-factorial projective variety.  
Suppose that $(X,\Delta)$ is a divisorially log terminal pair and $-(K_X+\Delta)$ is nef.

Suppose that we may find a big and nef $\mathbb{Q}$-divisor divisor $A$ and a kawamata log
terminal pair $(X,\Delta_0)$ such that $K_X+\Delta_0+A \sim_{\mathbb{R}} N\geq 0$ has
numerical dimension zero.

Then we may find a divisorially log terminal modification $\pi\colon\map Y.X.$ of
$(X,\Delta)$, a big and nef $\mathbb{Q}$-divisor $B$ and a kawamata log terminal pair
$(Y,\Gamma_1)$ such that $K_Y+\Gamma_1+B \sim_{\mathbb{R}} L\geq 0$ has numerical
dimension zero, $\Gamma_1$ and $L$ have no common components and no non kawamata log
terminal centre of $(Y,\Gamma)$ is contained in the support of $L$.
\end{lemma}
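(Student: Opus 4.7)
The plan is to pass to a divisorially log terminal model $\pi\colon Y\to X$ of $(X,\Delta)$ and then use the identity $K_Y+\Gamma+\pi^*M\sim_{\mathbb{R}}0$, where $M=-(K_X+\Delta)$, to shuffle unwanted divisors from the right hand side of the $\mathbb{R}$-linear equivalence into $\Gamma_1$.

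First, since $M=-(K_X+\Delta)$ is nef and $K_X+\Delta+M\sim 0$ is nef, Proposition \ref{p_dlt} provides a dlt modification $\pi\colon Y\to X$ of $(X,\Delta)$ with $\pi^*M$ nef; write $K_Y+\Gamma=\pi^*(K_X+\Delta)$. Set $B_0=\pi^*A$, which is big and nef, and define
\[
\Gamma_1^{(0)}=\pi_*^{-1}\Delta_0+\sum_i d_iE_i,
\]
where the $E_i$ are the $\pi$-exceptional divisors and the $d_i\in(0,1)$ are chosen small enough that $(Y,\Gamma_1^{(0)})$ is kawamata log terminal and large enough that the exceptional correction coming from the difference $K_Y+\Gamma_1^{(0)}+B_0-\pi^*(K_X+\Delta_0+A)$ is effective. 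The divisor $L^{(0)}:=K_Y+\Gamma_1^{(0)}+B_0$ is then $\mathbb{R}$-linearly equivalent to $\pi^*N$ plus an effective $\pi$-exceptional divisor; it is effective and has numerical dimension zero, because $\pi^*N$ has numerical dimension zero and adding an effective exceptional divisor preserves this property.

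Next, for $\alpha>0$ I would set
\[
\Gamma_1^{(\alpha)}=\frac{\Gamma_1^{(0)}+\alpha\Gamma}{1+\alpha},\quad B_1=\frac{B_0+\alpha\pi^*M}{1+\alpha},\quad L^{(\alpha)}=\frac{L^{(0)}}{1+\alpha}.
\]
Then $(Y,\Gamma_1^{(\alpha)})$ remains klt (each coefficient is a convex combination of one strictly below $1$ and one at most $1$), $B_1$ is big and nef, $K_Y+\Gamma_1^{(\alpha)}+B_1\sim_{\mathbb{R}}L^{(\alpha)}$, and $L^{(\alpha)}$ still has numerical dimension zero. For any component $Q$ of $\lfloor\Gamma\rfloor$, the coefficient of $Q$ in $\Gamma_1^{(\alpha)}$ approaches $1$ as $\alpha\to\infty$ while its coefficient in $L^{(\alpha)}$ approaches $0$, so for $\alpha$ sufficiently large the former exceeds the latter for every such $Q$ simultaneously. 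We may then subtract the $L^{(\alpha)}$-coefficient of each $Q$, times $Q$, from both sides of $K_Y+\Gamma_1^{(\alpha)}+B_1\sim_{\mathbb{R}}L^{(\alpha)}$. This removes every codimension-one non-klt centre of $(Y,\Gamma)$ from $\Supp(L)$ while preserving effectivity, klt-ness, bigness and nefness, and numerical dimension zero. Any remaining common components of $\Gamma_1$ and $L$ are handled by the same shuffling device, applied to the minimum of the two coefficients.

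The main obstacle is ensuring that no non-klt centre of $(Y,\Gamma)$ of codimension at least two is contained in $\Supp(L)$. To address this, I would choose the dlt modification $\pi$ at the outset to extract, as a divisor, a log-discrepancy-zero valuation of $(X,\Delta)$ with centre equal to each non-klt centre of $(X,\Delta)$ that is contained in $\Supp(N)$. Only finitely many such extractions are needed, since the non-klt centres of $(X,\Delta)$ contained in $\Supp(N)$ form a finite collection of subvarieties of $X$. After this enriched choice of $\pi$, every codimension-$\ge 2$ non-klt centre of $(Y,\Gamma)$ whose image on $X$ lies in $\Supp(N)$ has been promoted to a codimension-one component of $\lfloor\Gamma\rfloor$, which is then removed from $\Supp(L)$ by the shuffling argument of the previous paragraph. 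Since $\Supp(L)$ only shrinks during the subtractions, no new bad higher-codimension centres can appear.
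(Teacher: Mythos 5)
Your overall strategy — pass to a suitable dlt modification, pull back, and then use a convex combination with the crepant pullback of $(X,\Delta)$ to shuffle coefficients and cancel common components of $\Gamma_1$ and $L$ — matches the paper's; your inline shuffling argument is essentially a re-proof of \eqref{l_minimal}, which the paper applies twice, and that part of your sketch is fine. The gap is in how you arrange for $\Supp(L)$ to avoid the non-klt centres of $(Y,\Gamma)$.

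You choose $\pi$ to be a dlt model of $(X,\Delta)$ enriched so as to extract a log canonical place over each non-klt centre of $(X,\Delta)$ that lies inside $\Supp(N)$, and you claim that afterwards every codimension-$\geq 2$ non-klt centre of $(Y,\Gamma)$ whose image lies in $\Supp(N)$ has been ``promoted to a codimension-one component of $\rfdown\Gamma.$.'' That is not what happens. If $V\subset\Supp(N)$ is such a centre and $E$ the extracted divisor over it, the strata $E\cap\widetilde D_i$ (for $D_i$ a component of $\rfdown\Delta.$ through $V$) are new codimension-$\geq 2$ non-klt centres of $(Y,\Gamma)$ still lying over $V$. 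Your shuffling only removes from $\Supp(L)$ the prime divisors that are components of $\rfdown\Gamma.$; it does nothing about a component $\widetilde P$ of $L$ which is the strict transform of a component $P$ of $N$ and happens to contain $E\cap\widetilde D_1$. This does occur: with $\Delta=(x=0)+(y=0)$, $V=(x=y=0)$ and $P=(x-y^2=0)$, the blow-up $\pi$ of $V$ has $\widetilde P\cap E=\widetilde D_1\cap E$, so the codimension-two non-klt centre $\widetilde D_1\cap E$ of $(Y,\Gamma)$ survives in $\Supp(L)$ no matter how you shuffle the divisorial coefficients.

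The paper's proof avoids this precisely by taking $\pi$ to be a dlt modification of $(X,\Delta+tN)$ rather than of $(X,\Delta)$. Writing $K_Y+\Gamma+tL=\pi^*(K_X+\Delta+tN)$, the pair $(Y,\Gamma+tL)$ is itself dlt; for $t$ small $\rfdown \Gamma+tL.=\rfdown\Gamma.$, and the dlt condition forces $\Supp(\Gamma)\cup\Supp(L)$ to be simple normal crossings in a neighbourhood of every stratum of $\rfdown\Gamma.$. Consequently no prime component of $L$ outside $\rfdown\Gamma.$ can contain any such stratum — this is exactly the conclusion you were trying to secure by hand. In other words, the modification has to ``see'' $N$ when it resolves, not merely the non-klt centres of $(X,\Delta)$; building $N$ into the pair being resolved is the extra idea your proposal is missing.
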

\begin{proof} \eqref{l_minimal} implies that replacing $\Delta_0$ and $N$ we may assume
that $\Delta_0$ and $N$ have no common components.  Pick $t>0$ such that $\rfdown tN.=0$.
Let $\pi\colon\map Y.X.$ be a divisorially log terminal modification of $(X,\Delta+tN)$.
Then $\pi$ has finitely many exceptional divisors.  If $E$ is an exceptional divisor which
is not a log canonical place of $(X,\Delta)$ then $E$ is not a log canonical place of
$(X,\Delta+\delta N)$ for $\delta>0$ sufficiently small.  Thus replacing $t$ by $\delta>0$
sufficiently small, we may assume that $\pi$ is also a divisorially log terminal
modification of $(X,\Delta)$.

If we write
\[
K_Y+\Gamma=\pi^*(K_X+\Delta),
\]
then $(Y,\Gamma)$ is divisorially log terminal and $-(K_Y+\Gamma)$ is nef.  We may also
write
\[
K_Y+\Gamma_0=\pi^*(K_X+\Delta_0) \qquad \text{and} \qquad B=\pi^*A.
\]
If $L=\pi^*N$ then $\mult_EL>0$ for every exceptional divisor $E$ whose centre $V$ is
contained in the support of $N$.  As
\[
K_Y+\Gamma_0+B \sim_{\mathbb{R}} L,
\]
by \eqref{l_minimal} we may find $B_1$ big and nef and
$\Gamma_1\geq (1-\delta)\Gamma\geq 0$ such that
\[
K_Y+\Gamma_1+B_1 \sim_{\mathbb{R}} tL_1,
\]
where $L=L_0+L_1$ and $L_1$ has no common components with $\Gamma$.  But then no non
kawamata log terminal centre of $(Y,\Gamma)$ is contained in the support of $L$.
\end{proof}

\begin{lemma}\label{l_less} Let $X$ be a $\mathbb{Q}$-factorial projective toric variety.  
Let $B\geq 0$ be an $\mathbb{R}$-Cartier divisor whose support contains all but one
invariant divisor.  Let $\nu$ be a valuation which is not toric.  

Then we may find a divisor $0\leq B' \sim_{\mathbb{R}} B$ such that $\nu(B')>\nu(B)$
whilst $\mu(B')\leq \mu(B)$ for every toric valuation $\mu$.

If further $(X,B)$ is a log canonical pair such that every log canonical place is toric
then we may pick $B'$ such that $(X,B')$ is log canonical and the only log canonical
places of $(X,B')$ are toric valuations.  
\end{lemma}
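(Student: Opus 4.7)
The plan is to construct $B' = B + \epsilon \operatorname{div}(g)$ for a small $\epsilon > 0$ and a rational function $g$ on $X$ satisfying $\nu(g) > 0$, $\mu(g) \le 0$ for every toric valuation $\mu$, and no pole along the invariant divisor $D_0$ outside $\operatorname{supp}(B)$. Once such a $g$ is produced, the first statement follows at once: the remaining poles of $g$ lie along invariant divisors in $\operatorname{supp}(B)$, each with strictly positive coefficient in $B$, so $B' \ge 0$ for $\epsilon$ sufficiently small, and both $\mu(B') \le \mu(B)$ and $\nu(B') > \nu(B)$ are immediate.

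To construct $g$, let $C$ be the centre of $\nu$ on $X$ and let $V = \overline{T \cdot C}$ be the smallest invariant closed subvariety containing $C$. Apply \eqref{l_missing} with this $V$ and $D = D_0$ to obtain an invariant divisor $B_1 \ge 0$, supported on invariant divisors containing neither $V$ nor $D_0$, such that $A = B_1|_V$ is very ample on $V$ and every element of $|A|$ lifts to $|B_1|$. Let $s_0 \in H^0(X, \mathcal{O}(B_1))$ be the $T$-invariant monomial section with divisor $B_1$. Using the surjectivity of the restriction map $H^0(X, \mathcal{O}(B_1)) \to H^0(V, A)$, and passing to a sufficiently ample multiple of $B_1$ if necessary, pick a section $s_1 = \alpha s_0 + \dotsb$ with $\alpha \in \mathbb{C}^*$ such that $s_1|_V$ vanishes along $C$ inside $V$, and set $g = s_1/s_0$. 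The appearance of $s_0$ as a non-zero summand of $s_1$ implies $\mu(\operatorname{div} s_1) \le \mu(\operatorname{div} s_0) = \mu(B_1)$ for every toric $\mu$, so $\mu(g) \le 0$; taking $\mu = v_{D_0}$, together with the effectivity of $s_1$ and the fact that no component of $B_1$ equals $D_0$, forces $v_{D_0}(g) = 0$. Moreover, since $V$ is the smallest invariant subvariety containing $C$ and no component of $B_1$ contains $V$, no component of $B_1$ contains $C$ either, so $s_0$ does not vanish at the generic point of $C$ and $\nu(s_0) = 0$; on the other hand $s_1$ vanishes along $C$ by construction, so $\nu(s_1) > 0$ and therefore $\nu(g) > 0$.

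For the second statement, fix a common log resolution $\pi\colon Y \to X$ of $(X, B)$ and $\operatorname{div}(g)$. Log canonicity of $(X, B')$ and the classification of its log canonical places are controlled by the finitely many prime divisors $E_j$ on $Y$: by hypothesis each non-toric $E_j$ satisfies $a(E_j; X, B) - v_{E_j}(B) > 0$, so shrinking $\epsilon$ further so that $\epsilon\, v_{E_j}(g) < a(E_j; X, B) - v_{E_j}(B)$ for every non-toric $E_j$ with $v_{E_j}(g) > 0$ keeps $(X, B')$ log canonical and prevents any non-toric divisor from becoming a log canonical place. The main technical point lies in the construction of $s_1$: one must simultaneously guarantee that $s_0$ is a summand (to control the toric valuations) and that $s_1|_V$ vanishes along $C$ (to raise the $\nu$-value); this should be achievable by a Bertini-type argument after possibly replacing $B_1$ by a sufficient multiple.
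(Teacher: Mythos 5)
Your approach runs parallel to the paper's in spirit (both rest on \eqref{l_missing}, restriction to $V$, and moving a divisor or function in a way that raises $\nu$ while holding all toric valuations), but you have omitted the preliminary reductions that are in fact what make the construction possible, and without them your key step fails.

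The crucial point is the existence of $s_1$ with $s_1|_V$ vanishing along $C$ \emph{and} $s_0$ appearing with nonzero coefficient. Consider first the degenerate case $C = V$, i.e.\ the centre $C$ of $\nu$ is itself an invariant subvariety. (This does occur: e.g.\ blow up a torus-fixed point of a toric surface, then blow up a non-fixed point of the exceptional curve; the resulting exceptional valuation is non-toric and centred at the original fixed point.) Then $s_1|_V$ vanishing along $C = V$ forces $s_1|_V \equiv 0$, i.e.\ $V \subset \operatorname{div}(s_1)$, hence $\mu_V(\operatorname{div}(s_1)) \geq 1$. But $B_1$ does not contain $V$, so $\mu_V(\operatorname{div}(s_0)) = \mu_V(B_1) = 0$, whence $\mu_V(g) = \mu_V(s_1) - \mu_V(s_0) \geq 1 > 0$, violating the required inequality for the toric valuation $\mu_V = \operatorname{ord}_V$. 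Even when $C \subsetneq V$, the same problem recurs whenever $C$ contains an invariant point $p$ of $V$ with $s_0|_V(p) \neq 0$: at an invariant point, exactly one monomial section of $A$ is nonvanishing (the vertex of the polytope), namely $s_0|_V$; hence any section of $A$ vanishing along $C$ vanishes at $p$, which forces the coefficient of $s_0|_V$ to be zero. Passing to multiples of $B_1$ does not remedy this, and your closing appeal to ``a Bertini-type argument'' glosses over precisely the obstruction.

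The paper removes these obstructions before invoking \eqref{l_missing}: it first replaces $X$ by a toric resolution, then blows up $V$ repeatedly until $W \neq V$ (handling the $C=V$ case), and then applies \cite[1.2]{Tevelev07} twice to pass to a model on which the strict transform of $W$ contains no invariant subsets. Only after these reductions is a member $B_0'$ of the base-point-free system $|B_0|$ through $W$ guaranteed to exist with $\mu(B_0') \leq \mu(B_0)$ for all toric $\mu$. Your second paragraph (on preserving log canonicity by shrinking $\epsilon$) is fine once the first part is established, and matches the paper's convex-combination argument, but the construction of $g$ as written does not go through without the blow-up and Tevelev reductions.
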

\begin{proof} We prove the first statement.  

Let $\pi\colon\map Y.X.$ be a birational morphism of toric varieties.  As the support of
$B$ contains every invariant subset of codimension at least two, it follows that
$\pi^*B\geq 0$ is an $\mathbb{R}$-Cartier divisor whose support contains all but one
invariant divisor.  Thus we are free to replace $X$ by $Y$ and $B$ by $\pi^*B$.  In
particular we may assume that $X$ is smooth.  We are also free to replace $B$ by a
multiple.

Let $W$ be the centre of $\nu$ and let $V$ be the smallest invariant subset of $X$ which
contains $W$.  If $W=V$ then let $\pi\colon\map Y.X.$ blow up $V$.  Replacing $X$ by $Y$
and repeating this procedure finitely many times we reduce to the case $W\neq V$.
\cite[1.2]{Tevelev07} implies that we may find a birational morphism of toric varieties
$\map V'.V.$ such that the strict transform $W'$ of does not contain any invariant subsets
of $V'$.  Let $\map Y.X.$ be a birational morphism of toric varieties, which is an
isomorphism at the generic point of $V$, such that if $U$ is the strict transform of $V$
then the birational morphism $\map U.V.$ factors through $\map V'.V.$.
\cite[1.2]{Tevelev07} implies that, replacing $X$ by $Y$, we may reduce to the case when
$W$ does not contain any invariant subsets.

Note that by \eqref{l_missing} we may find a divisor $C\geq 0$ supported on the invariant
components of $B$ not containing $V$ such that $A=C|_V$ is very ample and we can lift
elements of the linear system $|A|$.

Pick a birational morphism $\pi\colon\map Y.X.$ of toric varieties such that the mobile
part of $\pi^*C$ is base point free.  Replacing $X$ by $Y$ we may assume that the mobile
part $B_0$ of $C$ is base point free.  Note that $B_0$ and $C$ are the same in a
neighbourhood of $V$.  Replacing $B$ by a multiple we may assume $B_1=B-B_0\geq 0$.  

Let $f\colon\map X.Z.$ be the contraction morphism associated to $B_0$.  Then the
restriction of $f$ to $V$ is an isomorphism.  As $W$ does not contain any invariant
subsets, it follows that $f(W)$ does not contain any invariant subsets and so $f(W)$ does
not contain the image of any invariant subvariety of $X$.

As $B_0$ is the pullback of very ample divisor from $Z$, we may pick
$0\leq B_0'\sim_{\mathbb{R}} B_0$ such that $B'_0$ contains $W$ and
$\mu(B_0')\leq \mu(B_0)$ for all toric valuations $\mu$.  If $B'=B'_0+B_1$ then
$0\leq B' \sim_{\mathbb{R}} B$, $\nu(B')>\nu(B)$, whilst $\mu(B')\leq \mu(B)$ for every
toric valuation $\mu$.

Now suppose that $(X,B)$ is log canonical.  If $B_t=tB'+(1-t)B$ then
$0\leq B_t\sim_{\mathbb{R}} B$, $\nu(B_t)>\nu(B)$ for $t>0$, $(X,B_t)$ is log canonical if
$t$ is sufficiently small and the only log canonical places are toric valuations.
\end{proof}

\begin{lemma}\label{l_dream} Assume \eqref{t_toric}$_{n-1}$.  

Let $X$ be a $\mathbb{Q}$-factorial projective variety of dimension $n$.  Suppose that
$(X,\Delta)$ is a divisorially log terminal pair and $-(K_X+\Delta)$ is nef.

If the complexity of $(X,\Delta)$ is less than one then $X$ is of Fano type.  In
particular $X$ is a toric variety.
\end{lemma}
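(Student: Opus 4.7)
The plan is to show that $X$ is of Fano type; once this is established, $X$ is a Mori dream space by \eqref{l_mori}, and the toric conclusion then follows by applying \eqref{t_mdscase} to the given decomposition of complexity less than one. By \eqref{l_mori}(3) it is enough to exhibit a kawamata log terminal pair $(X,\Psi)$ with $K_X+\Psi\equiv 0$ and $\Psi$ big. As a starting point, I would apply \eqref{p_numerical} (which invokes the inductive hypothesis \eqref{t_toric}$_{n-1}$) to obtain an ample divisor $A$ and a divisor $0\leq \Delta_0\leq \Delta$ such that $K_X+A+\Delta_0$ is pseudo-effective of numerical dimension at most $c(X,\Delta)<1$, and hence equal to zero. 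Therefore $K_X+A+\Delta_0\equiv N:=N_\sigma(K_X+A+\Delta_0)\geq 0$; by \eqref{l_minimal} I may arrange that no component of $\Delta$ is a component of $N$, and after a small perturbation of $(A,\Delta_0)$ the pair $(X,A+\Delta_0)$ is kawamata log terminal. The plan is then to induct on the number of components of $N$; in the base case $N=0$, we have $K_X+A+\Delta_0\equiv 0$ with $A+\Delta_0$ big and klt, so $X$ is of Fano type by \eqref{l_mori}(3).

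For the inductive step, assume $N\neq 0$. I would apply \eqref{l_up} to pass to a divisorially log terminal modification $\pi\colon Y\to X$ of $(X,\Delta)$, together with a big and nef divisor $B$, a kawamata log terminal pair $(Y,\Gamma_1)$, and an effective divisor $L$ of numerical dimension zero satisfying $K_Y+\Gamma_1+B\sim_{\mathbb{R}} L$, with the crucial extra property that no non kawamata log terminal centre of $(Y,\Gamma):=(Y,\pi^*(K_X+\Delta)-K_Y)$ is contained in $\Supp(L)$. By adjunction and \eqref{l_dlt}, each coefficient-one component of $\Gamma$ inherits a log pair of complexity less than one, and by \eqref{t_toric}$_{n-1}$ these strata carry compatible toric structures. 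Applying \eqref{l_less} to the toric structure of such a stratum, I produce a divisor $\mathbb{R}$-linearly equivalent to $\Gamma_1+B$ which is more singular along a chosen component $E$ of $L$, yet less or equally singular at every toric valuation of the stratum. A convex linear combination of $\Delta_0$ pulled back to $Y$, an effective divisor supported on $L$ and this perturbation then yields new data on $X$ whose negative part has strictly fewer components than $N$, closing the induction.

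The hardest step is the inductive reduction: the convex combination has to be chosen so that all the hypotheses of \eqref{p_numerical} are preserved simultaneously, namely kawamata log terminality of the new pair, numerical dimension zero of the new sum, and disjointness of the new negative part from $\Delta$, while the number of components of $N$ strictly decreases. The role of \eqref{l_up} is precisely to push the non kawamata log terminal centres of $(Y,\Gamma)$ out of $\Supp(L)$, which is exactly the breathing room that \eqref{l_less} requires in order to manoeuvre the toric perturbation along a chosen component of $L$. Once $X$ has been shown to be of Fano type, the final toric assertion is then a direct application of \eqref{t_mdscase}.
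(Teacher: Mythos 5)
Your opening moves match the paper: reduce to producing a klt pair $(X,\Psi)$ with $K_X+\Psi\equiv 0$ and $\Psi$ big via \eqref{l_mori}, invoke \eqref{p_numerical} to get $A$, $\Delta_0$ with $K_X+A+\Delta_0\sim_{\mathbb{R}}N\geq 0$ of numerical dimension zero, handle the base case $N=0$ directly, and use \eqref{l_up} to arrange that no non-klt centre of $(X,\Delta)$ meets $\Supp(N)$. But the inductive step goes off the rails, and the divergence hides a genuine gap.

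You propose to apply \eqref{t_toric}$_{n-1}$ to the coefficient-one strata of $\Gamma$ to get toric structures on those strata, then apply \eqref{l_less} to a stratum. This cannot work as stated: \eqref{l_less} takes a toric variety $Z$ together with a divisor on $Z$ and returns a perturbed divisor still on $Z$; if you run it on an $(n-1)$-dimensional stratum, you obtain a perturbation living on that stratum and you are left with the unaddressed problem of lifting it to a perturbation of $\Gamma_1+B$ on all of $Y$. More fundamentally, you have lost the object to which \eqref{l_less} is actually meant to be applied. The paper instead takes a log terminal model $\pi\colon X\dashrightarrow Y$ of $(X,A+\Delta_0)$: on $Y$ the divisor $K_Y+\Gamma_0+B$ is numerically trivial, klt, and big, so $Y$ is of Fano type, and then \emph{$Y$ itself is toric} by \eqref{t_mdscase} applied to $(Y,\pi_*\Delta)$ (the complexity does not increase because $\pi$ only contracts components of $N$, none of which lie in $\Delta$). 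It is to this toric $Y$ that \eqref{l_less} is applied. This also surfaces a dichotomy absent from your sketch: if every $\pi$-exceptional valuation is a toric valuation of $Y$, then $X$ is log Fano immediately (via \eqref{l_persist}), and only when some exceptional $E$ gives a \emph{non-toric} valuation $\nu$ do you invoke \eqref{l_less} to increase $\nu(\Gamma)$. Finally, your claim that ``a convex linear combination \dots yields new data whose negative part has strictly fewer components than $N$'' needs to be cashed out with the discrepancy computation: one must compare $K_X+\Delta=\pi^*(K_Y+\Gamma)+F$ with $K_X+\Delta'=\pi^*(K_Y+\Gamma')+F'$, note that $\mult_E F'<\mult_E F$, and analyse $E_t=t(F'-F)+(1-t)N$ to see that its positive part has one fewer component. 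Without that, the induction does not visibly close; in particular the hypotheses you list (klt-ness, numerical dimension zero, disjointness from $\Delta$) are not what one must preserve --- the decreasing quantity is the number of components of the new $N$, controlled by the discrepancy comparison, not by a direct re-verification of the inputs to \eqref{p_numerical}.
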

\begin{proof} \eqref{p_numerical} implies that we may find a big and nef
$\mathbb{Q}$-divisor divisor $A$ and a kawamata log terminal pair $(X,\Delta_0)$ such that
$K_X+\Delta_0+A \sim_{\mathbb{R}} N\geq 0$ has numerical dimension zero.

By \eqref{l_up} possibly replacing $X$ by a higher model we may assume that $\Delta$ and
$N$ have no common componentd and that no non kawamata log terminal centre of $(X,\Delta)$
is contained in the support of $N$.  In particular we may pick $\epsilon>0$ such that no
non kawamata log terminal centre of $(X,\Delta+\epsilon N)$ is contained in $N$.

Let $\pi\colon\rmap X.Y.$ be a log terminal model of $(X,\Delta_0+A)$.  Set $B=\pi_*A$,
$\Gamma_0=\pi_*\Delta_0$ and $\Gamma=\pi_*\Delta$.  Then $(Y,\Gamma_0+B)$ is a kawamata
log terminal pair, $\Gamma_0+B$ is big and $K_Y+\Gamma_0+B$ is numerically trivial so that
$Y$ is of Fano type.  In particular \eqref{t_mdscase} implies that $Y$ is a toric variety.
If $\pi$ does not contract any divisors then $N=0$, $K_X+\Delta_0+A$ is numerically
trivial and so $\pi$ is an isomorphism.

Pick an exceptional divisor $E$ of $\pi$ and let $\nu$ be the corresponding valuation.  If
$\nu$ is a toric valuation of $Y$ for every exceptional divisor $E$ then $X$ is log Fano.
So we may assume that $\nu$ is not toric.  $E$ is a component of $N$ and so $\nu$ is not a
log canonical place of $(Y,\Gamma)$.  \eqref{l_less} implies that we may find
$0\leq \Gamma' \sim_{\mathbb{R}} \Gamma$ such that $\nu(\Gamma')>\nu(\Gamma)$,
$(Y,\Gamma')$ is log canonical and the only log canonical places are toric valuations.
Let $\Delta'$ be the strict transform of $\Gamma'$.  Then certainly $(X,\Delta')$ is log
canonical outside of the support of $N$, since the indeterminancy locus of $\pi$ is
contained in the support of $N$.

Let $\Delta_s=s\Delta'+(1-s)\Delta$.  As $N$ contains no non kawamata log terminal centre
of $(X,\Delta)$, it follows that $N$ contains no non kawamata log terminal centre of
$(X,\Delta_s+\epsilon N)$ for $s$ sufficiently close to zero.  In particular
$(X,\Delta_s)$ is log canonical for $s$ sufficiently close to zero.  Replacing
$(Y,\Gamma')$ by $(Y,\Gamma_s=\pi_*\Delta_s)$ for $s$ sufficiently close to zero, we may
assume that $(X,\Delta')$ is log canonical and $N$ contains no non kawamata log terminal
centre of $(X,\Delta'+\epsilon N)$.  We may write
\begin{align*} 
K_X+\Delta&=\pi^*(K_Y+\Gamma)+F\\
K_X+\Delta'&=\pi^*(K_Y+\Gamma')+F'
\end{align*} 
where $F$ and $F'$ are exceptional.  Note that the coefficients of $F'$ are linear
functions of $s$ and so we may pick $s$ sufficiently close to zero so that
$F-F'\leq \epsilon N$.

Let
\[
E_t=t(F'-F)+(1-t)N, 
\]
Decompose $E_t$ as $E^+_t-E^-_t$, where $E^{\pm}_t\geq 0$ and $E_t^+$ and $E_t^-$ have no
common components.  If we put 
\[
\Delta_t=(1-t)\Delta_0+t\Delta'+E_t^- \qquad \text{and} \qquad A_t=(1-t)A+tM
\]
then $(X,\Delta_t)$ is kawamata log terminal for $t\in [0,1)$ and we have
\begin{align*} 
K_X+\Delta_t+A_t &=(1-t)(K_X+\Delta_0+A)+t(M+K_X+\Delta')+E_t^-\\
                 &=(1-t)(K_X+\Delta_0+A)+t(\Delta'-\Delta)+E_t^-\\
                 &\sim_{\mathbb{R}} (1-t)N+t(F'-F)+E_t^-\\
                 &=E_t+E_t^-\\
                 &=E_t^+.
\end{align*} 
By assumption $\mult_EF'<\mult_EF$, so that $E_t^-\neq 0$ for $t$ sufficiently close to
$1$.  Thus $E_t^+$ has fewer components than $N$ and we are done by induction on the
number of components of $N$.  
\end{proof}

\section{Rationality via the Cox ring}
\label{s_rational}

\begin{proposition}\label{p_quadricmds} Let $X$ be a $\mathbb{Q}$-factorial projective variety.  
Suppose that the Cox ring of $X$ is a polynomial ring modulo a single relation $Q$,
\[
\Cox(X)=\frac{k[\llist x.n.]}{\langle Q\rangle},
\]
where $Q$ and $\llist x.n.$ are homogenous elements of $\Cox(X)$.  

If the rank of the quadratic part of $Q$ is at least two then there is a proper finite
morphism $\map Y.X.$ of degree at most two, which is \'etale outside a closed subset of
codimension at least two, such that $Y$ is rational.

In particular if $A_{n-1}(X)$ has no $2$-torsion then $X$ is rational.
\end{proposition}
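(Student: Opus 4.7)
The plan is to use the Cox-ring realisation of $X$ as a GIT quotient of an affine hypersurface, then exhibit a torus action that accounts for the rationality. Set $Y^{\mathrm{aff}} := \Spec \Cox(X) = \{Q = 0\} \subset \mathbb{A}^n$ and let $H := \Spec k[A_{n-1}(X)]$ act on $\mathbb{A}^n$ through the multi-grading of $k[x_1,\dots,x_n]$; by the basic properties of the Cox ring recalled earlier, $H$ preserves $Y^{\mathrm{aff}}$ and the geometric quotient recovers $X$. The ambient quotient $W := \mathbb{A}^n /\!\!/ H$ is a toric variety with big torus $T_0/H$ (where $T_0 = \mathbb{G}_m^n$), and $X$ embeds in $W$ as a prime Weil divisor cut out by $Q$. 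Our goal is to show that $X$ is birational to a torus-invariant divisor of $W$, which is itself toric and hence rational, possibly after passing to a degree-two cover.

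First I would normalise the quadratic part $Q_2$ of $Q$. Since $Q$ is $A_{n-1}(X)$-homogeneous of some class $[Q]$, every monomial $x_ix_j$ in $Q_2$ satisfies $[x_i]+[x_j]=[Q]$. Linear changes of coordinates inside each $A_{n-1}(X)$-homogeneous piece of $\Cox(X)$ preserve the grading and the $H$-action, so diagonalising $Q_2$ inside each multi-degree, combined with the hypothesis $\rk(Q_2)\geq 2$, reduces matters to one of two configurations:
\begin{itemize}
\item[\textbf{(a)}] a monomial $xy$ with $x\neq y$ two distinct homogeneous generators appears in $Q$ with nonzero coefficient (and $[x]+[y]=[Q]$); or
\item[\textbf{(b)}] after rescaling, $Q_2$ contains a piece $x^2+y^2$ with $2[x]=2[y]=[Q]$ but $[x]\neq [y]$, so that $\eta := [x]-[y]$ is a nontrivial $2$-torsion element of $A_{n-1}(X)$.
\end{itemize}
If $[x]=[y]$ happened to hold in (b), the substitution $u=x+\sqrt{-1}\,y$, $v=x-\sqrt{-1}\,y$, legitimate since $k$ is algebraically closed of characteristic zero, converts $x^2+y^2$ into $uv$ and puts us back in (a).

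In case (a), I would construct a one-parameter subgroup $\lambda'$ of $T_0/H$ whose generic orbit in $W$ meets $X$ in a single point. Starting with the cocharacter $\lambda$ of $T_0$ that scales $x$ with weight one and leaves the other coordinates fixed, the Laurent polynomial $Q(\lambda(t)\cdot p)$ in $t$, for a generic $p \in Y^{\mathrm{aff}}$, picks up a term of weight one from the $xy$-monomial. The $A_{n-1}(X)$-homogeneity of $Q$ combined with the presence of $xy$ constrains the $\lambda$-weights of the remaining monomials to lie in the same pair of consecutive integers, so that this Laurent polynomial has exactly one root in $\mathbb{G}_m$ generically. The image $\lambda'$ of $\lambda$ in $T_0/H$ is nontrivial, since $[x]\neq 0$; hence $X$ is birational to the toric quotient $W/\lambda'$ and is therefore rational.

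In case (b), let $H'\subset H$ be the index-two subgroup dual to the surjection $A_{n-1}(X)\twoheadrightarrow A_{n-1}(X)/\langle \eta\rangle$, and form the finite degree-two morphism $Y\to X$ with $Y:=Y^{\mathrm{aff}}/\!\!/H'$. Since $\eta$ is a class-group element, this morphism is \'etale outside a closed subset of codimension at least two. On $Y$ the pullbacks of $x$ and $y$ have equal class, so case (a) applies to $Y$ and gives that $Y$ is rational, proving the first assertion. If moreover $A_{n-1}(X)$ has no $2$-torsion then case (b) is excluded and case (a) already yields rationality of $X$ itself, proving the second assertion. The main obstacle is the detailed verification in case (a) that the cocharacter $\lambda$ can be chosen so that the $\lambda$-weights of the support of $Q$ span an interval of length exactly one; this rests on combining the presence of the $xy$-monomial with the rigidity imposed by $A_{n-1}(X)$-homogeneity on the remaining monomials of $Q$.
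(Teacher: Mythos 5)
Your overall strategy coincides with the paper's: realize $X$ as a GIT quotient of the affine Cox hypersurface, diagonalise the quadratic part of $Q$, split into the case of a cross term $xy$ and the case $x^2+y^2$ with $[x]\neq[y]$, and handle the latter by passing to the degree-two cover attached to the $2$-torsion class $[x]-[y]$. Your case (b) is essentially the paper's argument.

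In case (a), however, there is a genuine gap, and it is precisely the point you flag as ``the main obstacle.'' You want the Laurent polynomial $t\mapsto Q(\lambda(t)\cdot p)$, where $\lambda$ scales $x$ with weight one, to have a single root for generic $p$, which requires every monomial of $Q$ to have $x$-degree lying in a pair of consecutive integers. You assert that $A_{n-1}(X)$-homogeneity of $Q$ together with the presence of $xy$ forces this, but it does not: homogeneity only constrains the class $[m]=[Q]$ of each monomial $m\in Q$ and imposes no bound on $\deg_x m$. For instance $xy$ and $x^2z$ both have class $[Q]$ whenever $[x]+[z]=[y]$, and $y^3$ can also appear, so the $\lambda$-weights can be $\{0,1,2\}$ or worse; in the extreme case where $X$ is a hypersurface of degree $d$ in projective space, homogeneity is ordinary degree-$d$ homogeneity and says nothing about $\deg_x Q$. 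Diagonalising the quadratic part does not help, since it leaves the higher-degree monomials untouched.

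The paper closes exactly this gap by a preliminary coordinate change before introducing the torus action: collecting the terms of $Q$ divisible by $x_1$, writing $Q=x_1(x_2+q_0)+q_1$ with $q_1$ a polynomial in $x_2,\dots,x_n$, and performing the homogeneous substitution $x_2\mapsto x_2+q_0$ to arrive at the normal form $Q=x_1x_2-q$ with $q$ a polynomial in $x_2,\dots,x_n$ alone. Only after $Q$ is made linear in $x_1$ in this way does one choose a multi-degree-zero Laurent monomial $\mu=x_1x_2/\nu$ (with $\nu\in q$), build the corresponding $\mathbb{G}_m$-action on the ambient toric variety, and use the relation $\mu=q/\nu$ on $X$ to see that generic orbits meet $X$ exactly once. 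Your proposal needs to incorporate an analogous normalisation step; the rigidity you hope to extract from homogeneity alone cannot substitute for it.
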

\begin{proof} $R=\Cox(X)$ is a multigraded ring, and this grading corresponds to the
action of a diagonalisable group $H$ on $\Spec R$.  $X$ is a GIT quotient of $\Spec R$ by
$H$.  The action of $H$ extends to the polynomial ring $k[\llist x.n.]$ and the GIT
quotient of the corresponding affine space is a toric variety $Z$ which contains $X$ as a
divisor; the relation $Q$ is homogeneous for this action.

Let $T$ be the torus of $Z$.  The monomials in the coordinate ring $k[M]$ of the torus are
Laurent monomials in the variables $\llist x.n.$ of multi-degree $0$ in the grading.

Suppose that $x_ix_j\in Q$ for $i\neq j$.  Possibly permuting the coordinates we may
assume that $x_1x_2\in Q$.  Collecting together all of the terms divisible by $x_1$, we
may write
\[
Q=x_1(x_2+q_0)+q_1,
\]
where $q_1$ is a polynomial in $x_2$, $x_3$, \dots, $x_n$.  After the homogeneous change
of variable,
\[
\map x_i..\begin{cases} x_2+q_0 & \text{if $i=2$} \\
                        x_i     & \text{otherwise},
\end{cases}
\]
we may write 
\[
Q=x_1x_2-q
\]
where $q$ is a polynomial in $x_2$, $x_3$, \dots, $x_n$.  

As $Q$ is homogeneous and $q$ is not equal to zero, we may find a monomial $\nu\in q$ in
the variables $x_2$, $x_3$, \dots, $x_n$, with the same multi-degree as $x_1x_2$.  If we
set
\[
\mu=\frac{x_1x_2}{\nu}
\]
then $\mu$ is a Laurent monomial in the variables $\llist x.n.$ of multi-degree zero.

Therefore $\mu\in k[M]\subset k[\mathbb{Z}^n]$.  As the degree of $x_1$ is one in $\mu$ it
follows that $\mu=\mu_1$ corresponds to a primitive element $m_1$ of the lattice $M$ and
we may extend it to a basis $\llist m.p.$ of $M$, where the first coordinate of $m_i$ is
zero, for $i>1$.  If $\llist \mu.p.$ are the Laurent monomials corresponding to
$\llist m.p.$ of $M$, then $\mu_i$, $i>1$ are Laurent monomials in the variables $x_2$,
$x_3$, \dots, $x_n$.

Now let $G=\mathbb{G}_m$ act by $t$ on $\mu$ and trivially on all other basis elements.
Let $U$ be the open subset of the torus where $q\neq 0$.  On $X$ we have
\[
\mu=\frac{x_1x_2}{\nu}=\frac q{\nu}.
\]
As the RHS is invariant under the action of $G$ and the LHS is not, it follows that the
orbits of $G$ intersect $X\cap U$ in a unique point.  Thus $X$ is birational to $U/G$ and
so $X$ is rational.

Otherwise assume $x_ix_j\notin Q$, for all $i\neq j$.  Since the quadratic part of $Q$ has
rank at least two, $x_i^2$, $x_j^2\in Q$, for $i\neq j$.  Possibly permuting the
coordinates we may assume that $x_1^2$, $x_2^2\in Q$.  Rescaling we may assume that
\[
Q=x_1^2-x_2^2+q,
\]
where the quadratic part of $q$ is a polynomial in the variables $x_3$, $x_4$, \dots,
$x_n$.  

As $Q$ is homogeneous, $x_1^2$ and $x_2^2$ have the same multi-degree.  If $x_1$ and $x_2$
have the same multi-degree then both $x_1+x_2$ and $x_1-x_2$ are homogeneous and after the
change of coordinates
\[
\map x_i..\begin{cases} x_1+x_2 & \text{if $i=1$} \\
                        x_1-x_2 & \text{if $i=2$} \\
                        x_i     & \text{otherwise},
\end{cases}
\]
$x_1x_2\in Q$ so that $X$ is rational by what we already proved.

Otherwise $x_1/x_2$ is torsion of degree two.  By definition of the Cox ring, there is a
Weil divisor $D$ on $X$ such that $2D\sim 0$.  $D$ defines a proper finite morphism
$\map Y.X.$ of degree two, which is \'etale outside a closed subset of codimension at
least two.  

On the other hand, $E$ lifts to a Weil divisor on $Z$ such that $2E\sim 0$.  $E$ also
defines a proper finite morphism $\map W.Z.$ of degree two, which is also \'etale outside
a closed subset of codimension at least two, and induces the original cover $\map Y.X.$.

$W$ is a toric variety.  Moreover $W$ has the same Cox ring as $Z$ but with a grading
given by setting the class of $D$ equal to zero.  This grading corresponds to the action
of a diagonalisable group $G$ on $\Spec R$ and $W$ is a GIT quotient of $\Spec R$ by $G$.
$Y$ is a divisor in $W$, defined by the same equation as $X$ inside $Z$.  On $W$, however,
$x_1$ and $x_2$ have the same multi-degree and so $Y$ is rational by what we have already
proved.  \end{proof}

Note that $Y$ does not necessarily have the same Cox ring as $X$, since $Y$ might have
more divisors than $X$, as happens in the example in \S \ref{s_example}.
\section{An irrational example}
\label{s_example}

In this section we give an example of an irrational projective threefold $X$, together
with a log canonical pair $(X,\Delta)$ of absolute complexity $1$ such that $K_X+\Delta$
numerically trivial.  In particular the condition on torsion in $A_{n-1}(X)$ in
\eqref{p_quadricmds} is necessary.  We will construct $X$ as a $\mathbb{Z}_2$-quotient of
a conic bundle $Y$ over $T=\pr 1.\times \pr 1.$.

Pick bihomogeneous coordinates $([y_0:y_1],[z_0:z_1])$ on $T$.  If $q$ is a general
polynomial of degree $(2d,2d)$ in the monomials $y_i^2$, $z_i^2$ and $y_iz_i$,
$i\in \{\,0,1\,\}$, where $d>3$, then the zero locus of $q$ is a smooth curve $D$ which
contains none of the invariant points.  The equation
\[
x_0^2-x_1^2=q(y_0,y_1,z_0,z_1)x_2^2,
\] 
defines a divisor $Y$ inside the projectivisation $W$ of
\[
\ring {\pr 1.\times \pr 1.}.\oplus \ring {\pr 1.\times \pr 1.}.  \oplus \ring {\pr 1.\times \pr 1.}.(-d,-d).
\] 
$Y$ is a conic bundle $f\colon\map Y.T.$ over $T$.  Let $\Theta$ be the divisor on $W$ given by
the sum of the vanishing of $x_2$, together with the pullbacks of the torus invariant
divisors from $\pr 1. \times \pr 1.$, and set $\Gamma=\Theta|_Y$.

\begin{lemma}\label{l_y}\
\begin{enumerate} 
\item $(Y,\Gamma)$ is log smooth,
\item $K_Y+\Gamma=0$, and
\item $f\colon\map Y.T.$ has relative Picard number two.
\end{enumerate} 
\end{lemma}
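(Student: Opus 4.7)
The plan is to compute all divisor classes inside the ambient $\mathbb{P}^2$-bundle $\pi\colon W\to T$ and then restrict to $Y$. Write $\zeta=c_1(\ring W.(1))$, and let $h_y,h_z\in \Pic(W)$ denote the pullbacks of the two point classes on the factors of $T$. One checks that $x_0,x_1$ are sections of $\ring W.(1)$ while $x_2$ is a section of $\ring W.(1)\otimes\pi^*\ring T.(-d,-d)$, so $[Y]=2\zeta$ and $[\Theta]=\zeta+(2-d)(h_y+h_z)$. The projective-bundle formula gives $K_W=-3\zeta+(d-2)(h_y+h_z)$. Summing, $K_W+Y+\Theta\sim 0$, and adjunction gives (2).

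For (1), the only candidate singular points of $Y$ are the nodes $[0{:}0{:}1]$ of the degenerate conics over $D$. At such a point, the horizontal partials of $F=x_0^2-x_1^2-qx_2^2$ reduce to $-\nabla q\neq 0$ because $D$ is smooth, so $Y$ is smooth. For the snc condition I use that for general $q$ the curve $D$ is smooth, transverse to every torus-invariant divisor of $T$, and avoids the four torus fixed points. Hence the pullbacks $f^{-1}(y_i=0), f^{-1}(z_j=0)$ are smooth, the two sections $\sigma_\pm=\{x_0=\pm x_1,\,x_2=0\}$ of $f$ are smooth and disjoint, and a routine local check at each multiple intersection gives simple normal crossings.

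For (3), I first show the relative Picard number is at least $2$: $[\sigma_+]$ and $[\sigma_-]$ are independent modulo $f^*\Pic(T)$, since if $F_1=\{x_0=x_1\}$ is the component of the degenerate conic over a point of $D$ that passes through $\sigma_+$, then $\sigma_+\cdot F_1=1$, $\sigma_-\cdot F_1=0$, and $f^*C\cdot F_1=0$ for every $C\in \Pic(T)$. For the upper bound, the generic fibre of $f$ has the section $\sigma_+$ and is a smooth conic, hence $\cong\mathbb{P}^1_{k(T)}$ of Picard rank one; so any class on $Y$ is, modulo a multiple of $[\sigma_+]$, represented by a vertical divisor, and the vertical divisors are generated by $f^*\Pic(T)$ together with the two components $S_\pm=\{x_0=\pm x_1,\,q=0\}\cap Y$ of the degenerate fibres. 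The relation $S_++S_-=f^*[D]$ leaves just one extra generator, so the relative Picard number is at most $2$. The delicate step is this last inequality---ensuring there are no other vertical divisor classes---which I would handle via the standard analysis of divisor classes on a conic bundle with a section, using that $D$ is smooth and irreducible and that the double cover of $D$ parametrising components of degenerate fibres is split.
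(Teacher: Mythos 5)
Your proposal is correct and fills in some details the paper leaves implicit, but it takes a more computational route than the paper at two points. For (1), the paper observes that after rescaling $x_0$ and $x_1$, the hypersurface $Y$ is a \emph{general} member of the base-point-free linear series on $W$ spanned by $x_0^2$, $x_1^2$ and $x_2^2m$ (with $m$ running over the monomials of bidegree $(2d,2d)$ listed in the construction), and then invokes Bertini to get simultaneously the smoothness of $Y$ and transversality with all torus-invariant strata of $W$, hence log smoothness of $(W,Y+\Theta)$. You instead check singularity directly via the partial derivatives of $F=x_0^2-x_1^2-qx_2^2$ and then verify snc component-by-component; this works but requires you to add the hypothesis that $D$ meets the torus-invariant divisors of $T$ transversally, which the paper's Bertini argument delivers for free. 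For (2), the paper asserts $K_W+Y+\Theta=0$ and derives $K_Y+\Gamma=0$ by adjunction without showing the class computation; your explicit calculation $[Y]=2\zeta$, $[\Theta]=\zeta+(2-d)(h_y+h_z)$, $K_W=-3\zeta+(d-2)(h_y+h_z)$ is correct (with the ``lines in $\mathcal E$'' convention implicit in the paper, so that $x_2\in H^0(W,\mathcal O_W(1)\otimes\pi^*\mathcal O_T(-d,-d))$) and actually supplies the missing step. For (3), both proofs bound the relative Picard number below by exhibiting two geometrically meaningful divisors independent modulo $f^*\Pic(T)$ --- the paper uses the two components $D_1,D_2$ of $f^{-1}(D)$, you use the two sections $\sigma_\pm$; these are equivalent since $\sigma_+-\sigma_-\sim D_1-D_2$ up to $f^*\Pic(T)$ --- and both bound it above by the irreducibility of fibres over $T\setminus D$. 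You flag the upper bound as ``delicate'' and defer it to standard conic-bundle theory; the paper is equally terse, simply stating ``It follows that the relative Picard number is at most two,'' so this is not a gap relative to what the paper itself provides. Net, your argument is sound and somewhat more explicit; the paper's Bertini shortcut for (1) is slicker and removes the need to track transversality of $D$ separately.
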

\begin{proof} To prove (1) and (2), by adjunction it suffices to check that $(W,Y+\Theta)$
is log smooth and $K_W+Y+\Theta=0$.

Consider the linear series on $W$ spanned by $x_0^2$, $x_1^2$, and $x_2^2m$, where $m$
ranges over all monomials in $y_i^2$, $z_i^2$ and $y_iz_i$, $i\in \{\,0,1\,\}$, of degree
$(2d,2d)$.  Rescaling $x_0$ and $x_1$ we see that $Y$ is a general member of this linear
series.  On the other hand this linear series is base point free and so $Y$ is smooth and
intersects all torus invariant strata of $W$ transversely.  This gives (1) and (2).

For (3) note that the fibres of $f$ are irreducible except over the curve $D$, where the
fibres have two components.  It follows that the relative Picard number is at most two.
On the other hand, the inverse image of $D$ consists of two prime divisors
$D_1=V(x_0+x_1,q)$ and $D_2=V(x_0-x_1,q)$.  This is (3).
\end{proof}

Consider the $\mathbb{Z}_2$ action on $W$ sending
\[
\map (y_0,y_1,z_0,z_1,x_0,x_1,x_2).(y_0,-y_1,z_0,-z_1,x_0,-x_1,x_2)..
\]  
This action also defines an action on $T$ and under this action both $Y$ and $D$ are
invariant.  If $X$ is the quotient of $Y$ and $S$ is the quotient of $T$ then there is a
commutative diagram
\[
\begin{diagram}
Y   &   \rTo    &  X  \\
\dTo^f &  & \dTo^g \\
T   &   \rTo    &  S.
\end{diagram}
\]
Note that $g\colon\map X.S.$ is a conic bundle.  Note also that the action on $T$ is
toric, fixing only the four torus invariant points and so $S$ is also a toric surface with
four $A_1$ singularities.

Let $\Delta$ be the image of $\Gamma$.
\begin{proposition} \
\begin{enumerate} 
\item $(X,\Delta)$ is log canonical
\item $K_X+\Delta=0$, 
\item the absolute complexity $\gamma$ is one, and
\item $X$ is irrational.
\end{enumerate} 
\end{proposition}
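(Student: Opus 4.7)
The plan is to treat the four assertions in order, exploiting the quotient morphism $\pi\colon Y\to X$ and the structure of $(Y,\Gamma)$ established in \eqref{l_y}.

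For (1) and (2), I will first verify that every component of $\Gamma$ is $\mathbb{Z}_2$-invariant: the divisor $V(x_2)$ and the four pullbacks $V(y_i)$, $V(z_i)$ for $i=0,1$ are each sent to themselves set-theoretically. A direct inspection shows that the fixed locus of the involution on $Y$ is contained in $V(y_1)\cap V(z_1)\cap V(x_1)\cap Y$, which has codimension two; hence $\pi$ is \'etale in codimension one and $K_Y=\pi^*K_X$. Because each invariant component $E$ of $\Gamma$ maps generically $2$-to-$1$ onto its image $\pi(E)$ with no ramification in codimension one, $\pi^*\pi(E)=E$, so the relation $\pi^*(K_X+\Delta)=K_Y+\Gamma$ forces $\Delta$ to equal the reduced sum of the five images. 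Consequently $K_X+\Delta=0$ follows from $K_Y+\Gamma=0$, and log canonicity of $(X,\Delta)$ is the standard descent of log smoothness of $(Y,\Gamma)$ through a finite Galois cover \'etale in codimension one, cf.\ \cite{KM98}.

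For (3), by construction $n=3$ and $\Delta$ has five prime components each of coefficient one, so $d=5$. It remains to verify $\rho(X)=3$. The base $S$ is a $\mathbb{Q}$-factorial toric surface with four invariant rays, hence $\rho(S)=2$. The conic bundle $g\colon X\to S$ has relative Picard number one, because the involution exchanges the two components $D_1,D_2$ of a reducible fibre of $f$ and thereby identifies the two relative Picard classes of $Y/T$ into a single class on $X/S$. Combining, $\rho(X)_{\mathbb{Q}}=\rho(Y)_{\mathbb{Q}}^{\mathbb{Z}_2}=\rho(S)+1=3$, so the absolute complexity is $\gamma=n+\rho-d=3+3-5=1$.

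The heart of the argument is (4). The discriminant curve of $g\colon X\to S$ is $C=\pi(D)$, where $D\subset T$ is smooth of bidegree $(2d,2d)$ and has genus $(2d-1)^2$. Since the four fixed points of the involution on $T$ do not lie on $D$, by the genericity of $q$, the cover $D\to C$ is \'etale of degree two, and Riemann--Hurwitz gives $g(C)=\tfrac12((2d-1)^2+1)$, which is of order $d^2$. To conclude, I will invoke the Clemens--Griffiths method: after passing to a smooth projective model $\widetilde X$, the intermediate Jacobian $J(\widetilde X)$ is isogenous to the Prym variety $P(D/C)$ of the double cover determined by the components of the reducible fibres of $g$, and $\dim P(D/C)=g(C)-1$ grows like $d^2$. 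By Beauville's theorem on intermediate Jacobians of standard conic bundles, for $d\gg 0$ such a Prym is not isomorphic as a principally polarised abelian variety to the Jacobian of a curve or a product of Jacobians, so $X$ is not rational. The principal obstacle is the careful bookkeeping required to check that $g$ (possibly after a small modification) is \emph{standard} in the sense of Beauville and that the Prym data survive the quotient by $\mathbb{Z}_2$; the underlying reason the example is available at all, emphasised in \S\ref{s_introduction}, is that the discriminant contributes nothing to Kawamata's canonical bundle formula for $K_X+\Delta$, leaving $g(C)$ unconstrained by the condition $K_X+\Delta=0$.
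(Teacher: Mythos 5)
For parts (1)--(3) you follow essentially the same route as the paper: \'etale in codimension one for (1)--(2), and the identification of $D_1,D_2$ under $\mathbb{Z}_2$ dropping the relative Picard number to $1$ for (3). A small imprecision: the fixed locus of the involution on $Y$ is not contained in $V(y_1)\cap V(z_1)\cap V(x_1)\cap Y$ alone --- it is a finite set of points lying in the fibres over all four torus-fixed points of $T$ (the loci $y_0y_1=0$, $z_0z_1=0$ contribute, and in each such fibre both $\{x_1=0\}$ and $[0{:}1{:}0]$ must be checked) --- but the only fact you need is that the fixed locus has codimension at least two, and that is correct.

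For part (4) you take a genuinely different route, and there is a real gap. The paper invokes Shokurov's criterion: if the Griffiths component of the intermediate Jacobian of a conic bundle over a rational surface vanishes, then the discriminant curve $C$ is hyperelliptic, trigonal, or a plane quintic. Everything then reduces to showing that the degree-two \'etale cover $D$ of $C$ carries no $g^1_6$, and that is immediate from Martens' gonality bound $\operatorname{gon}(D)\geq 2d>6$ for a general curve of bidegree $(2d,2d)$ on $\pr 1.\times\pr 1.$ with $d>3$. This gives the explicit range of $d$ in the construction, and no ``standardness'' of the conic bundle enters. You instead pass to the Prym variety $P(D/C)$ and appeal to Beauville's theorem that the Prym of a standard conic bundle is not a Jacobian or product of Jacobians once the genus is large. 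You yourself flag that you have not verified that $g\colon\map X.S.$ (or a small modification of it) is standard, nor that the Prym data descend through the $\mathbb{Z}_2$-quotient; this is not mere bookkeeping, since the quotient identifies the two relative Picard classes and the standardness hypothesis has to be re-established on $X$, not inherited from $Y$. Moreover, as you cite it, Beauville's dichotomy gives irrationality only for $d\gg 0$, whereas the proposition asserts it for every $d>3$; an explicit bound would have to be supplied. The paper's argument through Shokurov's trichotomy and the gonality of $D$ closes both of these gaps and is the intended proof.
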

\begin{proof} Note that as $\map Y.X.$ is \'etale in codimension one, (1) and (2) follow 
easily.  

The action of $\mathbb{Z}_2$ switches the divisors $D_1$ and $D_2$.  Thus $X$ has relative
Picard number $1$ over $S$ and so the absolute complexity of $(X,\Delta)$ is
\[
\gamma=\dim X+\rho(X)-d=3+3-5=1.
\]  
This is (3).

If $X$ is rational then the Griffiths component of the intermediate Jacobian must be
trivial.  If $C$ is the discriminant curve of the conic bundle $g\colon\map X.S.$ and the
Griffiths component is trivial then $C$ is hyperelliptic, trigonal, or isomorphic to a
plane quintic, cf. \cite{Shokurov83}.

$C$ is certainly not a plane quintic.  On the other hand $C$ is a smooth quotient of $D$
by the fixed point-free action of $\mathbb{Z}_2$.  It suffices to check that $D$ has no
$g_6^1$ (since if $D$ has a $g^1_3$ it has a $g^1_6$).  But it follows from a theorem of
Martens \cite{Martens96} that if $D$ has a $g^1_k$ then $k\geq 2d$.
\end{proof}

\section{Proofs}
\label{s_proofs}

\begin{proof}[Proof of \eqref{t_toric}] We proceed by induction on the dimension $n$ of
$X$.

\eqref{p_dlt} implies that we may find a divisorially log terminal model
$\pi\colon\rmap Y.X.$, such that if we write
\[
K_Y+\Gamma=\pi^*(K_X+\Delta),
\]
then $-(K_Y+\Gamma)$ is nef.  \eqref{l_dlt} implies that the complexity of $(Y,\Gamma)$ is
at most the complexity of $(X,\Delta)$.  \eqref{l_persist} implies that if $(Y,G)$ is a
toric pair then $(X,D)$ is a toric pair, where $D=\pi_*G$.  If $G\geq \sship \Gamma.$ then
$D\geq \sship \Delta.$ and if all but $\rfdown 2c.$ invariant divisors are components of
$\Gamma$ then all but $\rfdown 2c.$ invariant divisors are components of $\Delta$.
Replacing $(X,\Delta)$ by $(Y,\Gamma)$ we may assume that $X$ is a $\mathbb{Q}$-factorial
projective variety and $(X,\Delta)$ is divisorially log terminal.

\eqref{l_dream} implies that $X$ is of Fano type.  Thus $X$ is a Mori dream space and we
are done by \eqref{t_mdscase}.
\end{proof}

\begin{proof}[Proof of \eqref{c_complexity}] It it immediate from \eqref{t_toric} that 
$\rdown 2c.\geq 0$.  
\end{proof}

\begin{proof}[Proof of \eqref{c_span}] If $c<1$ then \eqref{t_toric} implies that $X$ is
toric and all but one invariant divisor is a component of $\Delta$.  On the other hand the
invariant divisors span the N\'eron-Severi group and any invariant divisor is in the span
of the other invariant divisors.
\end{proof}

\begin{proof}[Proof of \eqref{c_non}] Let $\bar k$ be the algebraic closure of $k$ and let
bars denote extension to the algebraic closure.

Then $(\bar X,\bar \Delta)$ is log canonical and $-(K_{\bar X}+\bar \Delta)$ is nef.  If
$\sum a_iS_i$ is a decomposition of $\Delta$ of complexity less than one then
$\sum a_i\bar S_i$ is a decomposition of $\bar \Delta$ of complexity less than one.
\eqref{t_toric} implies that there is a divisor $\bar D$ such that $(\bar X,\bar D)$ is
toric.  

Let $m$ be the number of invariant divisors.  Possibly reordering, we may assume that
$\llist \bar S.m-1.$ are invariant divisors.  In particular $\llist S.m-1.$ are prime
divisors.  Consider the linear system 
\[
|-(K_X+\sum_{i\leq m-1} S_i)|.
\]  
If $\bar D_m$ is the last invariant divisor on $\bar X$ then
\[
\bar D_m\in |-(K_{\bar X}+\sum_{i\leq m-1} \bar S_i)|.
\]  
Thus the linear system 
\[
|-(K_X+\sum _{i\leq m-1} S_i)|
\] 
is non-empty and we may find $S_m$ such that $K_X+D$ is linearly equivalent to zero and
$(X,D=\sum _{i\leq m} S_i)$ is log canonical.

In this case $(\bar X,\bar D)$ is toric, again by \eqref{t_toric}.  Replacing $(X,\Delta)$
by $(X,D)$ we may assume that $\Delta=D$.  In this case every component of $\bar D$ is an
invariant divisor and $\llist S.m.$ are all prime divisors.  By induction
$(S_i,(D-S)|_{S_i})$ is a toric pair.  In particular the strata of $D$ are geometrically
irreducible.

We may find $\bar\pi\colon\map \bar Y.\bar X.$ a birational morphism of toric varieties
such that $\bar Y$ is projective and there is a birational morphism to projective space,
$\bar g\colon\map \bar Y.{\gpr n.\bar k.}.$.  As the strata of $D$ are geometrically
irreducible, there is a birational morphism $\pi\colon\map Y.X.$ which extracts only
divisors of log discrepancy zero.  If we write
\[
K_Y+G=\pi^*(K_X+D),
\]
then $G$ is the sum of the strict transform of $D$ and the exceptional divisors.  It is
enough to prove that $(Y,G)$ is toric by \eqref{l_morphism}.  

Replacing $(X,D)$ by $(Y,G)$ we may assume that there is a birational morphism
$\bar f\colon\map \bar X.{\gpr n.\bar k.}.$.  Pulling back an invariant hyperplane, this
linear system is given by a sum of invariant divisors $\sum b_j\bar D_j$.  Consider the
linear system $|\sum b_jD_j|$.  This is base point free, has dimension $n$ and separates
points.  Thus we get a birational map to projective space $f\colon\map X.{\gpr n.k.}.$
such that $\bar f$ is toric.  

In particular $f$ only extracts divisors of log discrepancy zero.  \eqref{l_persist}
implies that $(X,D)$ is a toric pair.
\end{proof}

\begin{proof}[Proof of \eqref{t_form}] As \eqref{t_toric} holds in all dimensions 
\eqref{p_numerical} implies the first statement.  

Let $c$ be the complexity of $(X,\Delta)$.  Pick $\delta>0$ such that
$A+(1-\delta)\Delta_0$ is ample.  Replacing $\Delta_0$ by $(1-\delta)\Delta_0$ and $A$ by
$A+(1-\delta)\Delta_0$ we may assume that $(X,\Delta_0)$ is kawamata log terminal.  Let
$f\colon\rmap X.Y.$ be a log terminal model of $(X,A+\Delta_0)$.  Replacing $X$ by $Y$ we
may assume that $K_X+A+\Delta_0$ is nef.  In this case $K_X+A+\Delta_0$ is semiample.  Let
$f\colon\map X.W.$ be the induced model.  Then $-(K_X+\Delta_0)$ is ample over $W$.
\cite{HM05a} implies that the fibres of $f$ are rationally connected.  Thus $f$ factors
through the maximal rationally connected fibration $\rmap X.Z.$.  It follows that
\[
\dim Z\leq \dim Y\leq \nu(X,A+\Delta_0) \leq c. \qedhere
\]
\end{proof}

\begin{proof}[Proof of \eqref{t_rational}] \eqref{p_dlt} implies that we may find a
divisorially log terminal model $\pi\colon\rmap Y.X.$, such that if we write
\[
K_Y+\Gamma=\pi^*(K_X+\Delta),
\]
then $-(K_Y+\Gamma)$ is nef.  \eqref{l_dlt} implies that the absolute complexity of
$(Y,\Gamma)$ is at most the absolute complexity of $(X,\Delta)$.  Replacing $(X,\Delta)$
by $(Y,\Gamma)$ we may assume that $X$ is a $\mathbb{Q}$-factorial projective variety and
$(X,\Delta)$ is divisorially log terminal.

By \eqref{c_numerical} we may find a divisorially log terminal pair $(Y,\Gamma)$ such that
$-(K_Y+\Gamma)$ is ample, the absolute complexity is less than two and $Y$ is birational
to $X$.  Replacing $(X,\Delta)$ by $(Y,\Gamma)$ we may assume that $-(K_X+\Delta)$ is
ample.  \eqref{l_mori} implies that $X$ is a Mori dream space.  Pick
$B \sim_{\mathbb{R}} -(K_X+\Delta)$ such that $(X,B+\Delta)$ is divisorially log terminal.
Replacing $(X,\Delta)$ by $(X,B+\Delta)$ we may assume that $K_X+\Delta$ is numerically
trivial.

Let $R=\Cox(X)$ be the Cox ring of $X$, $Y=\Spec R$, and $\Gamma$ the divisor on $Y$
corresponding to $\Delta$.  Then every component of $\Gamma$ is Cartier and $K_Y$ is
Cartier.  \cite{KO12} implies that $(Y,\Gamma)$ is log canonical as $(X,\Delta)$ is log
canonical.

By \eqref{l_flipsabundance} $Y$ has a $cA_l$ singularity at the point $p$.  If $Y$ is
smooth then $X$ is a toric variety and there is nothing to prove.  Otherwise $\Cox(X)$ is
a polynomial ring modulo a single relation $Q$, where the rank of the quadratic part of
$Q$ is at least two.  Thus we may apply \eqref{p_quadricmds}.  \end{proof}

\bibliographystyle{/home/mckernan/Jewel/Tex/hamsplain}
\bibliography{/home/mckernan/Jewel/Tex/math}

\providecommand{\bysame}{\leavevmode\hbox to3em{\hrulefill}\thinspace}
\providecommand{\href}[2]{#2}
\begin{thebibliography}{10}

\bibitem{BCHM10}
C.~Birkar, P.~Cascini, C.~D. Hacon, and J.~M\textsuperscript{c}Kernan,
  \emph{Existence of minimal models for varieties of log general type}, J.
  Amer. Math. Soc. \textbf{23} (2010), no.~2, 405--468.

\bibitem{Brown13}
M.~Brown, \emph{Singularities of {C}ox rings of {F}ano varieties}, J. Math.
  Pures Appl. (9) \textbf{99} (2013), no.~6, 655--667.

\bibitem{Fine89}
J.~Fine, \emph{On varieties isomorphic in codimension one to torus embeddings},
  Duke Math. J. \textbf{58} (1989), no.~1, 79--88.

\bibitem{Fulton93}
W.~Fulton, \emph{Introduction to toric varieties}, Princeton University Press,
  1993.

\bibitem{GOST15}
Y.~Gongyo, S.~Okawa, A.~Sannai, and S.~Takagi, \emph{Characterization of
  varieties of {F}ano type via singularities of {C}ox rings}, J. Algebraic
  Geom. \textbf{24} (2015), no.~1, 159--182.

\bibitem{GHKS16}
M.~Gross, P.~Hacking, S.~Keel, and B.~Siebert, \emph{Theta functions on
  varieties with effective anti-canonical class}, {arXiv:1601.07081}.

\bibitem{HM05a}
C.~Hacon and J.~M\textsuperscript{c}Kernan, \emph{On {S}hokurov's rational
  connectedness conjecture}, Duke Math. J. \textbf{138} (2007), no.~1,
  119--136.

\bibitem{HM10}
\bysame, \emph{The {S}arkisov program}, J. Algebraic Geom. \textbf{22} (2013),
  389--405.

\bibitem{Hausen08}
J.~Hausen, \emph{Cox rings and combinatorics. {II}}, Mosc. Math. J. \textbf{8}
  (2008), no.~4, 711--757, 847.

\bibitem{HK00}
Y.~Hu and S.~Keel, \emph{Mori dream spaces and {GIT}}, Michigan Math. J.
  \textbf{48} (2000), 331--348, Dedicated to William Fulton on the occasion of
  his 60th birthday.

\bibitem{Karzhemanov13}
I.~Karzhemanov, \emph{On characterization of toric varieties},
  {arXiv:arXiv:1306.4131}.

\bibitem{Kawamata91}
Y.~Kawamata, \emph{On the length of an extremal rational curve}, Invent. Math.
  \textbf{105} (1991), 609--611.

\bibitem{KO12}
Y.~Kawamata and S.~Okawa, \emph{Mori dream spaces of {C}alabi-{Y}au type and
  log canonicity of {C}ox rings}, J. Reine Angew. Math. \textbf{701} (2015),
  195--203.

\bibitem{KM99}
S.~Keel and J.~M\textsuperscript{c}Kernan, \emph{Rational curves on
  quasi-projective surfaces}, Mem. Amer. Math. Soc. \textbf{140} (1999),
  no.~669, viii+153.

\bibitem{Kollaretal}
J.~Koll{\'a}r et~al., \emph{Flips and abundance for algebraic threefolds},
  Soci\'et\'e Math\'ematique de France, Paris, 1992, Papers from the Second
  Summer Seminar on Algebraic Geometry held at the University of Utah, Salt
  Lake City, Utah, August 1991, Ast\'erisque No. 211 (1992).

\bibitem{KK09}
J.~Koll{\'a}r and S.~Kov{\'a}cs, \emph{Log canonical singularities are {Du
  Bois}}, J. Amer. Math. Soc. \textbf{23} (2010), no.~3, 791--813.

\bibitem{KM98}
J.~Koll{\'a}r and S.~Mori, \emph{Birational geometry of algebraic varieties},
  Cambridge tracts in mathematics, vol. 134, Cambridge University Press, 1998.

\bibitem{Martens96}
G.~Martens, \emph{The gonality of curves on a {H}irzebruch surface}, Arch.
  Math. (Basel) \textbf{67} (1996), no.~4, 349--352, {10.1007/BF01197600}.

\bibitem{Nakayama04}
N.~Nakayama, \emph{Zariski-decomposition and abundance}, MSJ Memoirs, vol.~14,
  Mathematical Society of Japan, Tokyo, 2004.

\bibitem{Prokhorov01}
Y.~Prokhorov, \emph{On a conjecture of {S}hokurov: characterization of toric
  varieties}, Tohoku Math. J. (2) \textbf{53} (2001), no.~4, 581--592.

\bibitem{Shokurov83}
V.~V. Shokurov, \emph{Prym varieties: theory and applications}, Izv. Akad. Nauk
  SSSR Ser. Mat. \textbf{47} (1983), no.~4, 785--855.

\bibitem{Shokurov00}
\bysame, \emph{Complements on surfaces}, J. Math. Sci. (New York) \textbf{102}
  (2000), no.~2, 3876--3932, Algebraic geometry, 10.

\bibitem{Shokurov06}
V.V. Shokurov, \emph{{Letters of a Bi-Rationalist: VII. Ordered termination}},
  {arXiv:math/0607822v2}.

\bibitem{Tevelev07}
J.~Tevelev, \emph{Compactifications of subvarieties of tori}, Amer. J. Math.
  \textbf{129} (2007), no.~4, 1087--1104.

\bibitem{Yao13}
Y.~Yao, \emph{A criterion for toric varieties}, Ph.D. thesis, University of
  Texas, at Austin, 2013.

\end{thebibliography}


\end{document}